\pgfplotsset{compat=1.16}
\definecolor{red}{rgb}{0.7,0.15,0.15}
\definecolor{green}{rgb}{0,0.5,0}
\definecolor{blue}{rgb}{0,0,0.7}
\makeatletter \@addtoreset{equation}{section}
\newtheorem{theorem}{Theorem}[section]
\newtheorem{assumption}[theorem]{Assumption}
\newtheorem{corollary}[theorem]{Corollary}
\newtheorem{lemma}[theorem]{Lemma}
\newtheorem{proposition}[theorem]{Proposition}
\newtheorem{definition}[theorem]{Definition}
\newtheorem{remark}[theorem]{Remark}
\def \E{\mathbb{E}}
\def \F{\mathbb{F}}
\def \H{\mathbb{H}}
\def \L{\mathbb{L}}
\def \N{\mathbb{N}}
\def \P{\mathbb{P}}
\def \Q{\mathbb{Q}}
\def \R{\mathbb{R}}
\def\Ac{{\cal A}}
\def\Bc{{\cal B}}
\def\Cc{{\cal C}}
\def\Fc{{\cal F}}
\def\Hc{{\cal H}}
\def\Ic{{\cal I}}
\def\Mc{{\cal M}}
\def\Oc{{\cal O}}
\def\Pc{{\cal P}}
\def\Tc{{\cal T}}
\def\Vc{{\cal V}}
\def\Wc{{\cal W}}
\def\Yc{{\cal Y}}
\def\Zc{{\cal Z}}
\def\eps{\varepsilon}
\def\d{{\mathrm{d}}}
\newcommand{\1}[1]{\mathds{1}_{\{#1\}}}
\def\e{{\mathrm{e}}}
\def\=c{ \overset{c}{=}}
\DeclareRobustCommand{\varlambda}{\text{\usefont{OML}{txmi}{m}{it}\symbol{"15}}}
\def\as{\text{\rm--a.s.}}
\def\ae{\text{\rm--a.e.}}
\DeclareMathOperator*{\argmin}{arg\,min}
\newcommand{\xdim}{m}
\newcommand{\muin}{\mu_{0}}
\newcommand{\mufin}{\mu_{T}}
\renewcommand{\inf}{\mathop{\mathrm{inf}\vphantom{\mathrm{sup}}}}
\begin{document}

\title{Marginal flows of non-entropic weak Schr\"odinger bridges\footnote{\today.}}
%\title{Divergence regularized optimal transport on the path space and the weak Schr\"odinger problem \footnote{\today}}

%\author{Camilo Hern\'andez and Ludovic Tangpi\vspace{-5ex}}

\author{Camilo {\sc Hern\'andez} \footnote{University of Southern California, ISE department, USA. camilohe@usc.edu.} \and Ludovic {\sc Tangpi} \footnote{Princeton University, ORFE department, USA. ludovic.tangpi@princeton.edu.}}

\date{\vspace{-5ex}}

\maketitle

\abstract{
{
This paper introduces a dynamic formulation of divergence-regularized optimal transport with weak targets on the path space.
In our formulation, the classical relative entropy penalty is replaced by a general convex divergence, and terminal constraints are imposed in a weak sense.
We establish well-posedness and a convex dual formulation, together with a dual existence result and explicit structural characterizations of primal and dual optimizers. 
Specifically, the optimal path measure admits an explicit density relative to a reference diffusion, generalizing the classical Schrödinger system. 
In the case of zero transport cost, which corresponds to a non-entropic dynamic Schrödinger problem, we further characterize the flow of time marginals of the optimal bridge, recovering known results in the entropic setting and providing new descriptions for non-entropic divergences, including the $\chi^2$--divergence.}
}
%This paper introduces a dynamic formulation of divergence-regularized optimal transport with weak targets on the path space.
%% extending recent static results beyond entropic regularization. 
%%We study a non-entropic dynamic Schrödinger problem in which the 
%In our formulation, the classical relative entropy penalty is replaced by a general convex divergence, and terminal constraints are imposed in a weak sense. % of optimal transport. 
%We establish well-posedness and a convex dual formulation of the problem, together with existence results and explicit structural characterizations of primal and dual optimizers. 
%Specifically, the optimal path measure is shown to admit an explicit density relative to a reference diffusion, generalizing the classical Schrödinger system. 
%For the ``pure Schrödinger'' case, i.e. when the transport cost is zero, we further characterize the flow of time marginals of the optimal bridge, recovering known results in the entropic setting and providing new descriptions for non-entropic divergences including the $\chi^2$--divergence.}
%For the pure transport-free case
%\tableofcontents

\section{Introduction}\label{sec:intro}

Optimal transport (OT) provides a powerful framework for comparing probability measures, and has become a cornerstone in modern statistics, data science, machine learning and operations research. 
Formulated as a convex optimization problem over couplings between two distributions $\muin,\mufin$ not only compares distributions, but also exhibits geometric properties of the set of probability measures, and plays a fundamental role in partial differential equations.
% OT captures geometric relations between data sets and has found applications ranging from machine learning to econometrics. 
However, a well-known limitation is its poor statistical scalability: 
When the distributions $\muin, \mufin$ are replaced by empirical measures based on i.i.d. samples, the convergence of the empirical OT problem to its population counterpart deteriorates exponentially with the dimension, an instance of the curse of dimensionality; see, e.g., \citet{chewi2024statistical} for a recent survey. 
To overcome this limitation, entropic regularized optimal transport introduces an entropic penalty to the OT objective, yielding smoother dual problems and faster statistical convergence, see for instance \cite{bernton2021entropic,bernton2022stability,cuturi2013sinkhorn,peyre2019computational,mena2019statistical} among many works on the subject.
The entropic regularized perspective exhibit great statistical properties. 
In fact, the empirical optimal costs, optimal couplings, and dual potentials all converge to their population counterparts at the rate $\Oc(\sqrt{N})$ when the measures $\muin,\mufin$ are approximated by empirical measures of $N$ i.i.d. random variables, see \citet{genevay2019sample} and \citet{mena2019statistical}.

\medskip

Despite these advantages, the entropy penalty unfortunately produces full-support couplings (a consequence of Brenier's theorem), a phenomenon known as overspreading, which 
%blurs the sparse structure of the true OT coupling and 
can lead to undesirable blurring effects in imaging and manifold learning applications \citet{blondel2018smooth}.
Moreover, entropic OT becomes numerically unstable when the regularization parameter is small, as the dual variables may attain exponentially large or small values \citet{li2020continuous}.
To overcome these shortcomings, an increasingly popular alternative to entropic regularization is quadratically regularized optimal transport, which corresponds to the problem
\begin{equation}
\label{eq:static.ROT}
	v(\muin,\mufin) := \inf_{\pi \in \Pi(\muin,\mufin)}\bigg( \int_{\R^\xdim\times\R^\xdim} c(x,y)\pi(dx,dy) + \varepsilon \int_{\R^\xdim\times \R^\xdim}\ell\Big(\frac{d\pi}{d\muin\otimes \mufin} \Big )\muin\otimes \mufin(dx,dy) \bigg)
\end{equation}
where $\Pi(\muin,\mufin)$ is the set of couplings of the probability measures $\muin,\mufin$ and  $\ell(x):=\frac12|x|^2$.
This problem has attracted a sustained interest in recent years. 
While theoretical aspects of this problem are a lot less well-understood than its entropic counterpart, it has already been showed notably that quadratic OT retains many computational benefits of entropic OT and produces sparse approximations that more faithfully reflect the geometry of unregularized OT, see \citet{gonzalez2024sparsity,wiesel2025sparsity}. 
We refer for instance to \cite{nutz2025quadratically,gonzalez2024sparsity,zhang2023manifold,wang2025quadratic,wiesel2025sparsity,essid2018quadratically} for recent works on sparsity, and \citet{gonzalez2025linear,gonzalez2025sample} on gradient descent and sample approximation.
Quadratic regularized OT also enjoys numerical stability even for small regularization parameters and admits fast gradient-based optimization as showed by \citet{gonzalez2025linear}.
While quadratically regularized OT may suffer the curse of dimensionallity, the interesting recent paper of \citet{gonzalez2025sparse} shows that for a sufficiently large class of convex functions $\ell$ the divergence-regularized OT \eqref{eq:static.ROT} overcomes it and the empirical version of the problem converges at the parametric rate.
This further motivates the study of \eqref{eq:static.ROT} beyond the quadratic case.
Divergence regularized OT is also an emerging research topic, with recent references including \cite{bayraktar2025stability,gonzalez2025sparse,seguy2017large,muzellec2017tsallis,dessein2018regularized}.
Research on divergence-regularized OT has been restricted to the static case.
The goal of this work is to introduce divergence-regularized OT in the dynamic setting, and to derive the structure of the optimizer.
%\medskip

\paragraph*{Divergence-regularized OT and weak constraints.}
Let us introduce the divergence on the path space.
%On the path space, the entropic regularized OT can be succinctly describe as follow: 
Let $\xdim \in \N$, $T>0$ and $\muin,\mufin $ be given Borel probability measures on $\R^m$. 
Let $(\Omega,\F,\P)$ be a filtered probability space suppoting a Brownian motion $W$ and such that $\P$ is the unique weak solution of the stochastic differential equation %{\color{red}should we take $b(t,X)$?}
\[
\d X_t = b(t,X_{\cdot\wedge t}) \d t + \d W_t, \, \P\circ X_0^{-1}=\muin.
\]
The process $X$ belongs to $\Cc_T$, the space of continuous functions on $[0,T]$ with values in $\R^\xdim$ and $X_{\cdot\wedge t}$ denotes the path of $X$ up to time $t\in [0,T]$.
The (dynamic) divergence-regularized OT is 
\begin{equation}
\label{eq:Reg.OT}
	\Vc^\varepsilon(\muin,\mufin) = \inf\Big\{\E^\Q\big[C(X)\big] + \varepsilon   \Ic_\ell(\Q|\P),\, \Q\circ X_0^{-1}=\muin, \,\Q\circ X_T^{-1}=\mufin \Big\},
\end{equation}
% Schr\"odinger problem is the entropic optimal transport problem
% \begin{equation}
% \label{eq:dyn.SP}
% \Vc_{e}(\muin,\mufin):= \inf \Big\{ \Hc(\Q|\P): \Q\in \Pc(\Omega),\, \Q\circ X_0^{-1}=\muin, \,\Q\circ X_T^{-1}=\mufin \Big\}.
% \end{equation}
where $C:\Cc_T\to \R$ is a given cost function and $\Ic_\ell(\cdot |\P)$ denotes the divergence operator with respect to the measure $\P$ and the function $\ell$ i.e.
\begin{equation}
\label{eq:def.divergence.intro}
\Ic_\ell(\Q| \P):=\begin{cases} \displaystyle\E\bigg[\ell\Big( \frac{\d \Q}{\d \P}\Big)\bigg] & \text{if }\Q\ll\P\\
\infty&  \text{otherwise}.
\end{cases}
\end{equation}
%with $\ell(x) = x\log(x) -x + 1$.
In essence,
%That is, 
we consider the divergence-regularization of the celebrated Benamou–Brenier continuous-time transport problem \cite{benamou2000computational}.
This viewpoint 
%has proven fruitful in revealing deep connections between stochastic dynamics, optimal transport, and partial differential equations that a static model cannot capture.
% and it has motivated a range of generalizations in recent years. 
reveals deep connections between optimal transport, fluid mechanics, and gradient flows, but also enables powerful analytical and numerical tools that are unavailable in the static case.
Obviously when $C=0$, this problem corresponds to a form of (non-entropic) Schr\"odinger problem \cite{follmer1988random,schrodinger1931uber,schrodinger1932sur}  in which the discrepancy between probability measures is considered with respect to a general divergence.
This is in its own right an interesting generalization of Schr\"odinger problem that, as far as we know, has only been considered {in a static setting by \citet{leonard2001minimizers} and \citet{backhoff2022nonlinear}}.
From a probabilistic standpoint, divergences arise naturally when one seeks to control deviations between laws without imposing symmetry or additivity properties inherent to relative entropy. 
Their flexibility makes them well-suited for capturing different aspects of distributional distance, while still retaining structural features such as convexity and monotonicity under Markov kernels.
However, they do not, in general, satisfy the so-called data-processing equality
\begin{equation}
\label{eq:data.processing}
\Ic_\ell(\Q|\P) = \Ic_\ell (\Q_{0T}|\P_{0T}) + \int_{(\R^\xdim)^2}\Ic_\ell(\Q_{xy}|\P_{xy}) \P_{0T}(\d x\d y),
\end{equation}
where $\Q_{xy}(\cdot) \coloneqq \Q(\cdot|X_0 =x, X_T = y)$ and $\Q_{0T} \coloneqq \Q\circ (X_0,X_T)^{-1}$; which is well-known to hold (essentially) only in the case $\ell(x) = x\log(x)-x + 1$, which corresponds to the relative entropy see, e.g., \citet*[Theorem 2]{leonard2014some} and {\rm \citet*[Theorem 5.5 and Remark 5.6]{lacker2018law}}.
% denotes the disintegration of $\Q$ with respect to $\Fc_0$.
Consequently, the dynamic problem \eqref{eq:Reg.OT} does not necessarilty have the same value as its static counterpart \eqref{eq:static.ROT} for arbitrary $\ell$ (as is known in the entropic case), see for instance the arguments of proof of \cite[Proposition 2.3]{leonard2014survey}.

\medskip

A key feature of \eqref{eq:Reg.OT} is that the divergence regularization allows mass to diffuse along many possible paths rather than following a single deterministic trajectory, hence the reward accounts for the average trajectories induced by a path distribution $\Q$, not the precise microscopic path itself.
	This naturally suggests considering problems where we no longer require $X$ to hit the target distribution exactly, i.e., $\Q\circ X_T^{-1} = \mufin$, but to match it on average.
%{\color{blue}motivation needed?}
%We will also consider a weaker form of Problem \eqref{eq:Reg.OT} that allows more general terminal constraints than .
\emph{Weak optimal transport} problems introduced by Gozlan et. al. \cite{gozlan2017kantorovich,gozlan2018characterization} have emerged as a particularly flexible, yet tractable framework for modeling broader relationships between distributions.
See \eqref{eq.weaktransport} for precise definition and \citet{backhoff2022applications} for references on the topic and a sample of applications. 
Specifically, given a weak cost $c:\R^\xdim \times \Pc_p(\R^\xdim)\to \R$ and associated weak optimal transport value $\Wc_c(\cdot,\cdot)$, we will focus on the more general problem
\begin{equation}
\label{eq:Reg.OT.weak}
	V_c(\muin,\mufin) = \inf\Big\{\E^\Q\big[C(X)\big] +    \Ic_\ell(\Q|\P),\, \Q\circ X_0^{-1}=\muin, \,\Wc_c(\Q\circ X_T^{-1},\mufin) = 0 \Big\}
\end{equation}
where, for simplicity, we let $\varepsilon = 1$.
Examples will be discussed in subsection \ref{sec:examples} where we will see that adequate choices of $c$ allow to recover the constraint $\Q\circ X_T^{-1}=\mufin$, but also to model cases where the terminal law $\Q\circ X_T^{-1}$ of $X$ is constrained to be in convex order with $\mufin$ or to belong to a ball around it.

\paragraph*{Main results.}
In this work, we derive the dual problem of \eqref{eq:Reg.OT.weak} in the sense of convex analysis and, as main contributions of the paper, we recover fundamental structural properties of the primal and dual optimizers of \eqref{eq:Reg.OT.weak}.
	Let us also mention that we establish conditions for the existence of dual optimizers.
Let us give a brief synopsis of the main results of the work here; precise statements and assumptions are given in Section \ref{sub:main_results}.
For a generic probability measure $P$ and $\ell^*$ being the convex conjugate of $\ell$, let us introduce the functions
\begin{equation*}
	\Phi_{P}(\xi) := \inf_{r \in \R}(\E^{P}[\ell^\ast(\xi - r)] + r),\mbox{ and, } Q_c\varphi(x) := \inf_{\rho \in \Pc_p(\R^\xdim)}(c(x, \rho) + \langle \varphi,\rho\rangle)\quad x \in \R^\xdim,\; \xi \in \L^0.
\end{equation*}
Let $\F = (\Fc_t)_{t\in [0,T]}$ be the canonical filtration.
\Cref{thm:exists.duality} shows the duality relationship
\begin{equation*}
	V_c(\muin,\mufin) = \sup_{\varphi\in C_{b,p}(\R^m)}\bigg(  -\int_{\R^\xdim } \Phi_{\P_x}\big( - Q_c\varphi(X_T)- C(X) \big)\muin(dx) - \int_{\R^\xdim} \varphi(x)\mufin(dx) \bigg) + \Ic_\ell(\muin|\nu_0)
\end{equation*}
where $(\P_x)_{x\in \R^\xdim}$ is the regular conditional distribution of $\P$ with respect to $\Fc_0$, and $C_{b,p}(\R^\xdim)$ is a set of continuous functions defined below.
\Cref{thm.existence.dual} establishes conditions on the data of the problem, i.e., $\ell,C$ and $c$, for the existence of a dual optimizer $\varphi$ in the above characterization.
Then, \Cref{thm:propertyMain} shows that if $\varphi$ is a dual optimizer, then the probability measure $\Q$ given by
\begin{equation}
\label{eq:charac.optim.intro}
	\dfrac{\d\Q\!}{\d \P} :=\frac{\d\muin }{\d\nu_0} (X_0)   \partial_x\ell^*\big(  - Q_c\varphi(X_T) - C(X) -\psi(X_0)\big),
\end{equation}
for some measurable function $\psi$, is a primal optimizer.
Conversely, when $c(x,\rho)=\int_{\R^\xdim} \1{x\neq y}\rho(\d y)$, \Cref{thm:property.main.2} shows that if an admissible measure $\Q$ takes the form \eqref{eq:charac.optim.intro} then it is primal optimal and the function $\varphi$ is dual optimal.
Moreover, the functions $\varphi$ and $\psi$ satisfy an associated Schr\"odinger system, see \Cref{eq.prop.system}.
Lastly, in the case of Schr\"odinger problem with divergence cost, i.e. when $C=0$, we can further characterize the time-marginals of the optimal bridge.
In fact, see \Cref{them:charac.marginal}, in general the probability density function $\Q_t$ of $\Q\circ X_t^{-1}$ takes the form
\begin{equation}
\label{eq:non-markovian.charac.statement.intro}
	\partial_x \log(\Q_t(x)) = \E^{\Q}\Big[ \alpha^\star_t  + \overleftarrow{\alpha}_{T-t} \circ \overleftarrow{\Tc}^{-1}  |X_t = x \Big] + b(t,x)+ b(T-t,x),  \; \d t\otimes \d \Q_t\ae
\end{equation}
for two predictable processes $\alpha^\star$ and $\overleftarrow{\alpha}$, where $\overleftarrow{\Tc}$ is the time reversal mapping on the path space. 
The explicit form of this formula can be obtained on a case-by-case basis, leveraging stochastic optimal control theory.
This is illustrated in \Cref{cor:quad.cha} for the case of the $\chi^2$-divergence, see \eqref{flow.chisquare}.

\paragraph*{Related literature.}
In the entropic case $\ell(x) = x\log(x)-x + 1$, and with $C=0$, our results build on the well-established literature on the dynamic Schrödinger's problem. 
	In particular, the characterization of the density of the optimal bridge in terms of potentials, the Schrödinger system, as well as the characterization of the flow of marginal laws (with respect to a reversible reference measure) in terms of Hamilton--Jacobi--Bellman equations are well understood in this setting, see the survey of \citet{leonard2014survey} and the references therein.
	In the non-entropic case, a first step towards understanding the infinitesimal behaviour of the quadratically penalized optimal transport was discussed in \citet{garriz2024infinitesimal}
	\medskip

In the static setting, Schrödinger problems for general divergences were studied in \citet{leonard2001minimizers}. 
Regarding the study of divergence-penalized optimal transport problems, as mentioned above, an emerging body of literature investigates the effects of general penalization on the transport map.
In particular, what can be construed as the static versions of our primal and dual existence results as well as the characterization of the density of the optimal static bridge, appear in the work of \citet{bayraktar2025stability} for bounded cost function, in \cite{gonzalez2025sparse} for compactly supported $\muin$ and $\mufin$, and under fairly general assumptions in \citet{nutz2025quadratically} for quadratically penalized optimal transport, that is for $\chi^2$-divergence.

\medskip
We remind the reader that the previous references are all in the static case, whereas the setting of this paper corresponds to the dynamic non-entropic case, which, to the best of our knowledge, has remained unexplored.
Moreover, to the best of our knowledge, the present work seems to be the first Schr\"odinger problems with weak targets.

\paragraph*{Organization of the paper.}
In the next section, we present the divergence-regularized optimal transport we are interested in, with weak terminal constraints, and state our main results before providing a few examples.
\Cref{sec:preliminaries_and_examples} gathers some preparatory results for the proofs of our main results, which are presented in Section \ref{sec:proofs_of_the_main_results}.
Some standard but frequently used results from convex analysis and variational calculus are gathered in the appendix.

\paragraph*{Frequently used notation.}
Throughout this work, $\Omega\coloneqq \Cc([0,T],\R^\xdim)$ denotes the canonical space of continuous paths on $[0,T]$ with values in $\R^m$.
	$X$ denotes the canonical process, i.e., $X_t(\omega) \coloneqq \omega(t),(t,\omega)\in [0,T]\times\Omega$, and canonical filtration $\F \coloneqq (\Fc_t)_{t\in [0,T]}$, $\Fc_t\coloneqq \sigma(X_s:s\in [0,t])$. 
	$\L^0$ denotes the collection of real-valued random variables $\xi$ on $(\Omega,\Fc_T)$.
	Given $\Q\in {\rm Prob}(\Omega)$, we denote by $(\Q_x)_{x\in \R^m}$ the regular conditional probability distribution of $\Q$ given $\Fc_0$.\medskip

For a function $\ell:\R \longrightarrow \R$, its convex conjugate is given by $\ell^\ast(x)\coloneqq \sup_{y\in \R} (xy-\ell(y))$.
We also introduce the sets
\[
B_{b,p}(\R^\xdim):=\{\varphi:\R^\xdim\longrightarrow \R \text{ Borel measurable s.t. }   \exists a, b, \in \R: a\leq \varphi(y)  \leq b(1+|y|^p) ,\, \forall y\in \R^\xdim\}.
\]
and $C_{b,p}(\R^\xdim):=\{\varphi \in B_{b,p}(\R^\xdim) \text{ continuous}\}$.\medskip

Given a Polish space $E$, we denote by $\Pc(E)$ the set of Borel probability measures on $E$ and $\Pc_p(E)$ the set of elements of $\Pc(E)$ with finite $p^{\mathrm{th}}$ moments. 
	For a $\mu$-integrable function $\varphi$ defined on $E$, we put $\langle \varphi, \mu\rangle:=\int_E\varphi(x)\mu(\d x)$.
	For $\mu,\nu\in \Pc(E)$, $\Pi(\mu,\nu)$ is the set of couplings between $\mu$ and $\nu$ and
	the $p$-Wasserstein distance given by
\(
W_p^p (\mu,\nu)\coloneqq \inf_{\pi\in \Pi(\mu,\nu)}\int_{E \times E} \|x-y\|^p  \pi(\d x, \d y).
\)

\section{Problem statement}

Let $T>0$, $m$ a nonnegative integer, and $\nu_0 \in \Pc(\R^m)$ be fixed. 
We assume to be given a mapping $b:[0,T]\times \Cc([0,T],\R^\xdim) \longrightarrow \R^m$, such that there is a unique $\P\in {\rm Prob}(\Omega)$ weak solution to the SDE
%We consider the probability space $(\Omega,\Fc_T,\P)$ and given $\nu_0\in \Pc(\R^\xdim)$, $\P$ is the unique weak solution to
\begin{equation}\label{eq.dyn.P}
%	X_t =X_0+\int_0^t b(s,X_s) \d s + \sigma W_t,\, t\in[0,T],\, \P\as,\,  \P\circ X_0^{-1}=\nu_0,
	\d X_t = b(t,X_{\cdot\wedge t}) \d t +  \d W_t, \, \P\circ X_0^{-1}=\nu_0,
\end{equation}
where $W$ denotes a $\P$--Browninan motion.
	This is the case, for instance, if either $b$ is locally Lipschitz or bounded, see \cite[Section III.2d]{jacod2003limit}.\medskip

We are also given coefficients $C$, $\ell$, and $c$, satisfying the following set of assumptions.

\begin{assumption}\label{assump.data}
\begin{enumerate}[label=$(\roman*)$, ref=.$(\roman*)$,wide,  labelindent=0pt]
\item \label{assump.data.1}The function $\ell:\R\longrightarrow \R_+$ is strictly convex and twice continuously differentiable on $(0,\infty)$, satisfying $\ell(1)=0$ and $\ell(x)/x\to \infty$ as $x\to \infty$. 
Moreover, its convex conjugate $\ell^\ast$ satisfies %{\color{magenta} entropy? $\ell^\ast(x)=\e^x$}
	\[
	\forall r\in \R, \exists \gamma>0, \exists x_0\in \R: \forall x\geq x_0, \; \ell^\ast(x+r)\leq \gamma \ell^\ast(x).
	\]

\item \label{assump.data.2} There is $p\geq1$ such that $c:\R^\xdim\times \Pc_p(\R^\xdim)\longrightarrow \R_+$ is jointly l.s.c.~with respect to the product topology of $\R^\xdim\times\Pc_p(\R^\xdim)$, $\Pc_p(\R^\xdim)$ being equipped with the topology generated by $W_p$. There is $L>0$ such that
\[
c(x,\rho)\leq L\bigg(1+\|x\|^p +\int_{\R^\xdim}\|y\|^p\rho(\d y)\bigg), \, (x,\rho)\in \R^\xdim\times\Pc_p(\R^\xdim).
\]
In addition, the map $\rho\longmapsto c(x,\rho)$ is linearly convex for any $x\in \R^\xdim$, i.e.,
\[
c(x,\lambda\rho_1+(1-\lambda)\rho_2)\leq \lambda c(x,\rho_1)+(1-\lambda )c(x,\rho_2),\, \mbox{ for all }  \rho_1,\rho_2\in \Pc_p(\R^\xdim),\, \lambda\in [0,1].
\]
\item The function $C$ is Borel-measurable and bounded from below.
\end{enumerate}
\end{assumption}

%\begin{remark}\label{rmk.assump.data}
%Let us comment on the above set of assumptions.
Observe that under the above assumptions, the convex conjugate $\ell^\ast$ is continuously differentiable. %{\color{blue}I don't get this last sentence.}
%\end{remark}

\paragraph*{Weak targets.}
In this work, we will consider transportation problems for which the terminal configuration is specified in a broader sense by leveraging the weak optimal transport problem (WOT) first considered by \citet*{gozlan2017kantorovich}.
	Introduced as an equivalent tool to derive novel concentration inequalities, this generalization of optimal transport enables the construction of couplings between probability measures that possess desired structures or properties.
	Given $c:\R^\xdim\times\Pc(\R^\xdim)\longrightarrow \R_+$, as above, WOT is given by
\begin{align}\label{eq.weaktransport}
 \Wc_c (\mu,\nu):=\inf_{\pi\in \Pi(\mu,\nu)}\int_{\R^\xdim} c(x,\pi_x)\mu(\d x).
\end{align}
%where $\Pi(\mu,\nu)$ is the set of couplings of $\mu$ and $\nu$.
We write $\Wc_c$ to stress the dependence of WOT in $c$.
%Whenever we wish to stress the cost functional $c$, we write $\Wc_c$ which we refer to as the \emph{{\rm WOT} induced by $($the cost function$)$ $c$}, or the $c$-transport cost.
	%We recall that taking $c(x, \rho) =\rho( \textup{c}(x,\cdot))$, for $\textup{c}(x,y)$ a \emph{classical} cost leads to the classical optimal transport problem.\footnote{By a classical cost, we mean that $\textup{c}:\R^\xdim\times\R^\xdim\longrightarrow \R\cup\{+\infty\}$ is l.s.c.~and bounded from below.} 
	Notice that when $c(x,\rho)=\int_{\R^\xdim} \|x-y\|^p\rho(\d y)$, for $p\geq 1$, $\Wc_c^{1/p}$ is nothing but the $p$-Wasserstein distance \(W_p \).
	Further details, results relevant to our analysis, and examples {\color{black}are deferred to \Cref{sec:examples}. 
	Recalling that $\mu=\nu\in \Pc_p(\R^\xdim) \Longleftrightarrow  W_p(\mu,\nu)=0$, it seems natural to introduce a binary relation induced by $\Wc_c$ over the elements of $\Pc(\R^\xdim)$.
	%, and consequently over the family of random variables with given distributions. 

\begin{definition}\label{def.msim}
Given any two probability measures $\mu,\nu\in\Pc(\R^\xdim)$, we write $\mu \=c \nu $ whenever $  \Wc_c(\mu,\nu)=0$. %For a r.v.~$\xi$ such that $\xi \sim\mu$, we write $\xi \msim\nu\Longleftrightarrow  \Wc_c(\mu,\nu) $.
\end{definition}

\paragraph*{The non-entropic optimal transportation problem.}

%be a probability space on which is defined a $d$-dimensional Brownian motion $W$ with completed filtration $\F=(\Fc_t)_{t\in [0,T]}$.\medskip

Given two probability measures $\muin, \mufin$ such that $\Ic(\muin|\nu_0)<\infty$,
% strictly convex and twice continuously differentiable satisfying $\ell(x)/x\to \infty$ as $x\to \infty$, 
the goal of this work is to analyze the optimal transport problem
\begin{equation}\label{eq.SP}
	V_c(\muin,\mufin) \coloneqq \inf_{\Q \in \Pc_c(\muin,\mufin)}\Big( \E^{\Q}[C(X)]+\Ic_\ell(\Q|\P)\Big),
\end{equation}
with $\Ic_\ell$ denoting the divergence operator \eqref{eq:def.divergence.intro}
% \begin{equation*}
% %\label{eq:Def.divergence}\displaystyle
% 	\Ic_\ell(\Q|\P):=\begin{cases}
% 		\E\bigg[\ell\Big(\dfrac{\d\Q}{\d\P}\Big)\bigg] &\text{if } \Q\ll \P\\ 
% 		+\infty &\text{else},
% 	\end{cases}
% \end{equation*}
and 
%set of admissible controls defined, for $\muin\in \Pc(\R^\xdim)$ such that $\Ic_\ell(\muin|\nu_0)<\infty$, as
\begin{equation*}
		\Pc_c(\muin,\mufin) \coloneqq  \Big\{ \Q \in \Pc(\muin) :  \Q \circ X_T^{-1} \overset{c}{=} \mufin \Big\}, \text{ where, } \Pc(\muin) \coloneqq  \big\{ \Q \in {\rm Prob}(\Omega):  \Q\circ X_0^{-1}=\muin  \big\}.
\end{equation*}

\section{Main results and Examples}
\label{sub:main_results}

We will now rigorously state the main results of this work.
In addition to deriving well-posedness and convex duality for the divergence-regularized transport problem, we are chiefly interested in explicit representations of the optimal transport plan in terms of the optimal potential.
This property is a crucial building block for deriving the Sinkhorn algorithm, which has proven extremely efficient for the numerical simulation of entropic optimal transport \cite{cuturi2013sinkhorn,chen2021stochastic}.

%\begin{assumption}
%	\label{ass:assumption.intro}
%	\begin{enumerate}[label=$(\roman*)$, ref=.$(\roman*)$,wide,  labelindent=0pt]
%		\item The function $\sigma$ is such for each $t,x$ the matrix $\sigma(t.x)$ is invertible, and the SDE $dX_t = \sigma(t,X_t)dW_t$ admits a unique square-integrable strong solution for any $X_0\in \L^2$.
%		\item $b$ is of linear growth and  $b(t, x,\cdot)$ is $\|\cdot\|$-Lipschitz, uniformly in $(t, x)$.
%		That is, there is $C>0$ such that
%		\begin{equation*}
%			|b(t,x,\mu) - b(t,x,\mu')| \le C\|\mu - \mu'\|_{\mathrm{TV}}\quad \text{and}\quad |b(t,x,\mu)| \le C(1 + |x| + \|\mu\|_2)
%		\end{equation*}
%		for all $(t,x)\in [0,T]\times\R^m$ $\mu,\mu' \in \Pc_2(\R^m)$.
%		\item The function $\ell$ is strictly convex and twice continuously differentiable function on $(0,\infty)$, it satisfies $\ell(x)/x\to \infty$ as $x\to \infty$. 
%		%\camilo{Moreover, $\ell''(z)z>c, c>0$ ... can be replaced with $\ell(z)\geq c_1 z\log(z)+c_2, c_1>1, c_2\in \R$}
%	\end{enumerate}
%\end{assumption}
%Unless otherwise stated, Assumptions \ref{assump.costs} and \ref{ass:assumption.intro} are assumed to hold throughout the paper.

\subsection{Optimal transport plan in divergence-regularized OT}

To state the dual representation, for any $P\in {\rm Prob}(\Omega)$, we will need the functional $\Phi_{P}$ defined as
\begin{equation*}
	\Phi_{P} :\L^0 \longrightarrow  \R, \, \Phi_{P}(\xi) := \inf_{r \in \R}(\E^{P}[\ell^\ast(\xi - r)] + r),
\end{equation*}
%where $\ell^\ast$ denotes the convex conjugate of $\ell$.
	To alleviate the notation we will simply write $\Phi := \Phi_\P$.
This function is sometimes referred to as \emph{optimized certainty equivalent} in the literature. 
We refer the interested reader for instance to \cite{polyanskiy2025information,backhoff2020dynamic,bental2007old} for detailed accounts of properties of this function.

%(XXXX: It could be that the growth condition on elements of $C_{b,p}$ is too weak. For instance, when defining $\mathcal{OCE}(Q_c\varphi(X_T))$ below, we need to know that $\E[\ell^*(Q_c\varphi(X_T)^-)]<\infty$.)

\begin{theorem}
\label{thm:exists.duality}
	Let {\rm \Cref{assump.data}} hold.
\begin{enumerate}[label=$(\roman*)$, ref=.$(\roman*)$,wide,  labelindent=0pt]
		\item If $\Pc_c(\muin,\mufin) \neq \emptyset$, then the problem \eqref{eq.SP} admits a unique optimizer $\Q^\star \in \Pc_c(\muin,\mufin)$.
		\item The problem \eqref{eq.SP} admits the convex dual representation
		\begin{equation}
		\label{eq:dualprobl.intro}
		V_c(\muin,\mufin) = \sup_{\varphi\in C_{b,p}(\R^m)}\Big(  \Psi^\varphi(\muin) - \langle \varphi,\mufin\rangle\Big) %= \sup_{\varphi\in C_{b,p}}\Big( \langle \Phi^\varphi,\muin\rangle + \langle \varphi,\mufin\rangle\Big)
	\end{equation}
	%where $\Phi^\varphi(x) := \Psi^\varphi(\delta_x)$, 
	with the functional $\Psi^\varphi$ being defined as
	\begin{align}\label{eq:defPsi}
	 \Psi^\varphi(\mu)\coloneqq -\int_{\R^\xdim} \Phi_{\P_x}\big(-Q_c\varphi(X_T) - C(X)\big) \mu(\d x) +\Ic_\ell(\mu|\nu_0), \mbox{ with, } Q_c\varphi(x) := \inf_{\rho \in \Pc_p(\R^\xdim)}(c(x, \rho) + \langle \varphi,\rho\rangle).
	\end{align}
	\end{enumerate}
\end{theorem}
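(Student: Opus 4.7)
For part (i), I would apply the direct method of the calculus of variations. The objective $\Q\mapsto\E^\Q[C(X)]+\Ic_\ell(\Q|\P)$ is strictly convex on its effective domain $\{\Q\ll\P\}$ (by the strict convexity of $\ell$ in Assumption~\ref{assump.data}) and weakly lower semi-continuous: $\Ic_\ell(\cdot|\P)$ is lsc by the standard variational representation, and $\Q\mapsto\E^\Q[C]$ is lsc after the constant shift making $C$ non-negative. The superlinear growth $\ell(x)/x\to\infty$ yields tightness of the sub-level sets of $\Ic_\ell(\cdot|\P)$ via a de la Vall\'ee--Poussin/Dunford--Pettis argument. The set $\Pc_c(\muin,\mufin)$ is convex and weakly closed: continuity of $\Q\mapsto\Q\circ X_0^{-1}$ preserves the initial marginal, and the constraint $\Wc_c(\Q\circ X_T^{-1},\mufin)=0$ is preserved by the lower semi-continuity of $\Wc_c(\cdot,\mufin)$, itself inherited from the joint l.s.c.\ of $c$. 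A weakly convergent subsequence of any minimizing sequence then delivers an optimizer, and strict convexity gives uniqueness.

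For part (ii), my approach is a Fenchel--Rockafellar style duality argument combining two ingredients: (a) the weak optimal transport duality
\[
\Wc_c(\mu,\mufin)=\sup_{\varphi\in C_{b,p}(\R^\xdim)}\big\{\langle Q_c\varphi,\mu\rangle-\langle\varphi,\mufin\rangle\big\},
\]
which dualizes the weak terminal constraint, and (b) the variational characterization $\Phi_{\P_x}(\xi)=\sup_{Q\ll\P_x,\,Q(\Omega)=1}\{\E^Q[\xi]-\Ic_\ell(Q|\P_x)\}$, which dualizes the divergence pointwise in the starting state $x$. After recasting the primal as a min-max and justifying the interchange of inf and sup, the inner infimum over $\Q\in\Pc(\muin)$ of $\E^\Q[C+Q_c\varphi(X_T)]+\Ic_\ell(\Q|\P)$ is the crux. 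I would disintegrate $\Q=\int\Q_x\muin(\d x)$, factor the density as $\d\Q/\d\P=(\d\muin/\d\nu_0)(X_0)\cdot(\d\Q_{X_0}/\d\P_{X_0})$, and dualize the pointwise normalization $\E^{\P_x}[\d\Q_x/\d\P_x]=1$ with a Lagrange multiplier; this reduces the inner problem to an integral of $\ell^\ast$ whose optimum, by the definition of $\Phi_{\P_x}$, yields $-\int\Phi_{\P_x}(-Q_c\varphi(X_T)-C(X))\muin(\d x)$, with the additive constant $\Ic_\ell(\muin|\nu_0)$ emerging from the interplay between $\ell^\ast$ and the weight $\d\muin/\d\nu_0$; together these assemble $\Psi^\varphi(\muin)$.

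The main obstacle is strong duality, that is, justifying the inf/sup interchange and the exactness of the WOT penalty on $\Pc_c$. For the interchange I would invoke a Sion-type minimax theorem applied to the Lagrangian $L(\Q,\varphi):=\E^\Q[C+Q_c\varphi(X_T)]+\Ic_\ell(\Q|\P)-\langle\varphi,\mufin\rangle$, convex in $\Q$ and concave in $\varphi$; compactness in the $\Q$-variable comes from tightness of sub-level sets of $\Ic_\ell(\cdot|\P)$ established in (i), while upper semi-continuity in $\varphi$ follows from the regularity of $Q_c\varphi$ under Assumption~\ref{assump.data}. A subtlety specific to the non-entropic setting is that the data-processing equality~\eqref{eq:data.processing} fails, so $\Ic_\ell(\Q|\P)$ does not split cleanly into an initial and a conditional part via a chain rule. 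Consequently, the reduction to the conditional problem in $x$ has to be carried out directly at the density level, tracking how $\d\muin/\d\nu_0$ and $\ell^\ast$ combine, and the explicit form~\eqref{eq:charac.optim.intro} of the optimizer---arising from the first-order conditions of the Lagrangian---serves to certify dual attainment and close the duality through complementary slackness.
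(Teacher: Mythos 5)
Your part (i) matches the paper's argument essentially step for step: minimizing sequence, uniform integrability of the densities from the superlinear growth of $\ell$ (de la Vall\'ee--Poussin), lower semicontinuity of $\Ic_\ell(\cdot|\P)$ and of the constraint, plus convexity of $\Pc_c(\muin,\mufin)$ and strict convexity of $\ell$ for uniqueness. This is exactly the content of \Cref{lemma.convexcontrols} and \Cref{prop.v.lsc} as invoked in the paper's proof.

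Part (ii) is a genuinely different route, and it has a gap at its very first step. You propose to recast $V_c$ as $\inf_{\Q\in\Pc(\muin)}\sup_{\varphi}L(\Q,\varphi)$ with $L(\Q,\varphi)=\E^\Q[C+Q_c\varphi(X_T)]+\Ic_\ell(\Q|\P)-\langle\varphi,\mufin\rangle$, and then swap $\inf$ and $\sup$ by a Sion-type argument. But the inner supremum does not reproduce the hard constraint: by the weak OT duality \eqref{eq.dualitywot}, $\sup_\varphi\big(\E^\Q[Q_c\varphi(X_T)]-\langle\varphi,\mufin\rangle\big)=\Wc_c(\Q\circ X_T^{-1},\mufin)$, which is a \emph{finite} penalty, not the indicator of $\Pc_c(\muin,\mufin)$. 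Scaling $\varphi\mapsto\lambda\varphi$ does not let the penalty blow up because $Q_c(\lambda\varphi)\neq\lambda Q_c\varphi$. Consequently $\inf_{\Q\in\Pc(\muin)}\sup_\varphi L(\Q,\varphi)$ is a penalized relaxation that is a priori only $\leq V_c(\muin,\mufin)$, and a minimax theorem would only identify $\sup\inf$ with $\inf\sup$, still leaving a gap to $V_c$. Showing the penalty is exact \emph{is} the duality statement, so the argument as described is circular. The paper sidesteps this entirely by biconjugating $V_c(\muin,\cdot)$ as a convex, lsc function of the terminal marginal $\mufin\in\Mc_p(\R^\xdim)$ and then computing $V_c^\ast(-\varphi)$; there, $\mufin$ ranges freely, so the inner infimum naturally becomes unconstrained over $\Pc(\muin)$ via the identity $\Ac(\muin)=\bigcup_\mu\Ac_c(\muin,\mu)$, with no exactness claim required.

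Separately, your disintegration step --- factoring $\d\Q/\d\P=(\d\muin/\d\nu_0)(X_0)\,\d\Q_{X_0}/\d\P_{X_0}$ and dualizing the normalization --- is the right idea, and you correctly flag that the failure of \eqref{eq:data.processing} means the divergence does not tensorize by a naive chain rule. What the paper actually uses to close this step is the stochastic-control representation of the divergence in \Cref{lemma.ref}/\Cref{lem:Phi=oce}: $\Ic_\ell(\Q|\P)=\Ic_\ell(\muin|\nu_0)+\tfrac12\E^{\Q}\!\int_0^T\ell''(Z_t)Z_t\|\alpha_t\|^2\,\d t$, whose running cost is $\Fc_0$-measurably decoupled, giving the required tensorization over the initial condition even without additivity of $\Ic_\ell$. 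You should make this identification explicit rather than gesture at ``tracking how $\d\muin/\d\nu_0$ and $\ell^\ast$ combine''; without it, the reduction to the pointwise problem at each $x$ is not justified for non-entropic $\ell$.
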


\begin{remark}\label{rmk.duality.tv}
We remark for future reference that the case of bounded $c$ corresponds to the limiting case $p=\infty$ in $C_{b,p}(\R^m)$.
	In particular, in the case $c(x,\rho)=\int_{\R^\xdim} \1{x\neq y}\rho(\d y)$ associated to the terminal constraint $\Q\circ X_T^{-1}=\mufin$, thanks to {\rm \Cref{lemma.tv.ctransform}}, we have that $Q_c\varphi(x)=\varphi(x)$.
\end{remark}

In the rest of the paper we assume that $\Pc_c(\muin,\mufin) \neq \emptyset$.\medskip

%\subsection{Optimal bridges for non-entropic dynamic Schrödinger problems}
Our next set of results studies the existence of dual optimizers, i.e., of Problem \eqref{eq:dualprobl.intro}, and provides characterizations for the density with respect to $\P$ of the optimal primal path measure $\Q^\star$ in terms of the dual optimizers.
	Because \eqref{eq:data.processing} fails for most divergences, we will consider the following notions:
 % to distinguish two cases: in terms of the measures $\muin$ and $\nu_0$, satisfying $\muin\ll\nu_0$, and the divergence $\Ic_\ell$.
% 	To do so, we need the following definition.	
	
\begin{definition}
	The divergence operator $\Ic_\ell$ is superadditive relative to $\P$ if for all $\Q\in\Pc(\muin)$
	\[
		\Ic_\ell(\Q|\P)\geq \Ic_\ell (\muin|\nu_0) + \int_{\R^\xdim}\Ic_\ell(\Q_x|\P_x) \muin(\d x),
	\]
	where $\Q_x(\d \omega)\muin(\d x)$ denotes the disintegration of $\Q$ with respect to $\Fc_0$.
	The divergence operator $\Ic_\ell$ is subadditive relative to $\P$ if the inequality ``$\ge$'' above is replaced by ``$\le$''.
\end{definition}

\begin{remark}
	The notions of super and subadditive divergence probably first explicitly appeared in the work of {\rm \citet*{lacker2018law}}.
	Interestingly, this author showed that beyond relative entropy, which is additive, i.e., both super and subadditive, other examples of functions $\ell$ leading to additive divergence are functions whose Legendre transforms are the exponential of a function of at most linear growth, see {\rm \cite[Remark 5.3]{lacker2018law}}. 
\end{remark}

When $\muin=\nu_0$, our existence and characterization of optimal plans will hold for essentially all divergence operators; otherwise, we will need to assume that $\ell$ is such that the associated divergence $\Ic_\ell$ is superadditive.
% and characterize $\Q^\star$ for a large class of $f$-divergences, namely those satisfying \Cref{assump.data}.
%	In the general case, we narrow the divergences considered, noticeably, to those that are superadditive.
We collect these two cases under the following assumption: 
\begin{assumption}\label{assump.data.2}
$\P_{0T}\sim\muin\otimes\mufin$ and one of the following holds: $(i)$ $\muin=\nu_0$, $(ii)$ $\Ic_\ell$ is superadditive.
\end{assumption}

\begin{theorem}\label{thm.existence.dual}
	Let {\rm Assumptions \ref{assump.data}} and {\rm \ref{assump.data.2}} hold, fix $c(x,\rho) = \int \mathds{1}_{\{x \neq y\}}\rho(dy)$ and let $C(X)\equiv C(X_0,X_T)$.
	Then, {\rm Problem \eqref{eq:dualprobl.intro}} admits an optimal solution $\varphi^* \in B_{b,p}(\R^d)$.
	That is, there is $\varphi^*\in B_{b,p}(\R^d)$ such that
	\begin{equation*}
		V_c(\muin,\mufin) = \Psi^{\varphi^*}(\muin) - \langle\varphi^*, \mufin\rangle.
	\end{equation*}
\end{theorem}

\begin{remark}
%Let us comment on the assumptions under which we establish the existence of dual optimizers. 
	In addition to {\rm \Cref{assump.data.2}}, already commented above, {\rm \Cref{thm.existence.dual}} assumes that $c(x,\rho)=\int_{\R^\xdim} \1{x\neq y}\rho(\d y)$, that is, recall {\rm \Cref{rmk.duality.tv}}, the terminal constraint corresponds to $\Q\circ X_T^{-1}=\mufin$.
	This is done to bypass technicalities arising from the term $Q_c\varphi$ in the dual formulation \eqref{eq:dualprobl.intro}.
	Let us also mention that the assumption $C(X)\equiv C(X_0,X_T)$ arises, ultimately, as we rely on the results of {\rm\citet*{ruschendorf1993note}}.
	We leave the general as the subject of further research and comment on the steps needed to overcome the particular choice of weak cost after the proof of {\rm \Cref{thm.existence.dual}} in {\rm \Cref{sec.dual.optimizers}}.
\end{remark}

We are now in a position to present the announced characterization of the density with respect to $\P$ of the optimal primal path measure $\Q^\star$ in terms of the dual optimizers.

\begin{theorem}
\label{thm:propertyMain}
Let {\rm Assumptions \ref{assump.data}} and {\rm \ref{assump.data.2}} hold. Let $\varphi^\star \in B_{b,p}(\R^\xdim)$ %{\color{blue}(can we replace this set by $B_{b,p}(\R^\xdim)$?)} 
be optimal for the dual problem \eqref{eq:dualprobl.intro} satisfying $\E^{\muin\otimes \P_\cdot}[\ell^\ast((Q_c\varphi^\star (X_T) + C(X))^+)]<+ \infty$. There is a measurable function $\psi^\star:\R^m\longrightarrow \R$ such that 
	\begin{equation*}
		\dfrac{\d\Q^\star\!}{\d \P} =\frac{\d\muin }{\d\nu_0} (X_0)   \partial_x\ell^*\Big(  - Q_c\varphi^\star(X_T) - C(X) -\psi^\star(X_0)\Big).
	\end{equation*}
%	{\color{blue}If we use star i.e. $\ell^\star$ for conjugate, do we also use star, i.e. $\varphi^\star$ for optimal?} \camilo{ It should be $\ell^\ast$}
\end{theorem}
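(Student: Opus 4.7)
The strategy is to combine the strong duality from the previous theorem with the equality case of Fenchel's inequality, applied slice-by-slice after disintegrating along $\Fc_0$. The first step is to prove the pointwise weak-duality bound
\[
\E^\Q[C(X)]+\Ic_\ell(\Q|\P)\ge \Psi^\varphi(\muin)-\langle\varphi,\mufin\rangle, \qquad \Q\in\Pc_c(\muin,\mufin),\;\varphi\in B_{b,p}(\R^\xdim),
\]
by chaining three inequalities. $(a)$ The weak terminal constraint $\Wc_c(\Q\circ X_T^{-1},\mufin)=0$ together with the definition of $Q_c\varphi$ yields $\E^\Q[Q_c\varphi(X_T)]\le \langle \varphi,\mufin\rangle$. $(b)$ Disintegrating $\Q=\muin(\d x)\otimes\Q_{x}$ and $\P=\nu_0(\d x)\otimes\P_{x}$, the relevant part of \Cref{assump.data.2} (superadditivity when $\muin\neq\nu_0$, or the exact additivity which is automatic when $\muin=\nu_0$) gives $\Ic_\ell(\Q|\P)\ge \Ic_\ell(\muin|\nu_0)+\int\Ic_\ell(\Q_x|\P_x)\muin(\d x)$. $(c)$ For $\muin$-a.e.\ $x$, the Fenchel--Young inequality applied with $\xi:=-Q_c\varphi(X_T)-C(X)$ delivers $\Ic_\ell(\Q_x|\P_x)\ge \E^{\Q_x}[\xi]-\Phi_{\P_x}(\xi)$. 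Summing $(a)$--$(c)$ and rearranging produces the bound.

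Plugging $(\Q,\varphi)=(\Q^\star,\varphi^\star)$ into this chain, \Cref{thm:exists.duality} forces equality throughout. In particular, equality in $(c)$ integrated against $\muin$ compels the pointwise identity
\[
\Ic_\ell(\Q^\star_x|\P_x)=\E^{\Q^\star_x}[\xi]-\Phi_{\P_x}(\xi), \qquad \muin\text{-a.e.\ }x,
\]
with $\xi=-Q_c\varphi^\star(X_T)-C(X)$. The integrability hypothesis $\E^{\muin\otimes\P_\cdot}[\ell^\ast((Q_c\varphi^\star(X_T)+C(X))^+)]<\infty$, combined with the superlinear growth of $\ell$ and the growth-comparison estimate on $\ell^\ast$ in \Cref{assump.data}, ensures that $r\mapsto \E^{\P_x}[\ell^\ast(\xi-r)]+r$ is finite, strictly convex and coercive on $\R$ for $\muin$-a.e.\ $x$. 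Hence its infimum $\Phi_{\P_x}(\xi)$ is uniquely attained at some $r_x$, characterised by the first-order condition $\E^{\P_x}[\partial_x\ell^\ast(\xi-r_x)]=1$.

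Since the map $(x,r)\mapsto \E^{\P_x}[\partial_x\ell^\ast(\xi-r)]$ is jointly measurable and strictly decreasing in $r$, a Kuratowski--Ryll-Nardzewski type selection yields a Borel measurable version $\psi^\star(x):=r_x$. The Fenchel equality case combined with the strict convexity of $\ell$ then reads
\[
\frac{\d\Q^\star_x}{\d\P_x}=\partial_x\ell^\ast\bigl(-Q_c\varphi^\star(X_T)-C(X)-\psi^\star(x)\bigr),\qquad \muin\text{-a.e.\ }x,
\]
with the first-order condition guaranteeing that the right-hand side is a bona fide $\P_x$-density. Since $\Ic_\ell(\muin|\nu_0)<\infty$ forces $\muin\ll\nu_0$ and the finiteness of $V_c(\muin,\mufin)$ forces $\Q^\star\ll\P$, the canonical disintegration relation $\d\Q^\star/\d\P=(\d\muin/\d\nu_0)(X_0)\cdot (\d\Q^\star_{X_0}/\d\P_{X_0})$ holds $\P$-a.s., and substituting the slice-wise density produces the announced expression. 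I expect the main delicate point to be the measurable selection of $\psi^\star$ together with the verification that the joint integrability of $\ell^\ast((Q_c\varphi^\star+C)^+)$ transfers to $\muin$-almost every slice $\P_x$ with enough quantitative control to solve the first-order condition uniquely; this is precisely where the growth hypothesis on $\ell^\ast$ in \Cref{assump.data} is used.
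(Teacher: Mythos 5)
Your proof is correct and follows essentially the same route as the paper: disintegrate along $\Fc_0$, invoke superadditivity (or the chain rule when $\muin=\nu_0$), apply the optimized-certainty-equivalent duality of \Cref{prop:dual.f.div} slice-by-slice, select the root $\psi^\star(x)$ of the first-order condition measurably, and reassemble via the canonical disintegration. The only cosmetic difference is that you package the argument as weak duality plus complementary slackness, whereas the paper first establishes $\langle\varphi^\star,\mufin\rangle=\E^{\Q^\star}[Q_c\varphi^\star(X_T)]$ by a short contradiction and then chains the remaining identities; these are equivalent.
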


\begin{remark}
	Let us comment on the above result. 
	First, note that the existence of a dual optimizer is complemented by the integrability condition $\E^{\muin\otimes \P_\cdot}[\ell^\ast((Q_c\varphi^\star (X_T) +C(X))^+)]<\infty$.
	In the case $\muin=\nu_0$, the condition reduces to $\E^{\P}[\ell^\ast((Q_c\varphi^\star (X_T) + C(X))^+)]<\infty$ and guarantees a dual representation for the functional $\Phi_{\P_x}$, for $\mu_0\ae\; x\in \R^\xdim$, see {\rm \Cref{prop:dual.f.div}}.
	Second, the reader might find it illustrative to note that in the case $\ell(x)=x\log(x)-x+1$ and $c(x,\rho)=\int_{\R^\xdim} \1{x\neq y}\rho(\d y)$, we have that $\ell^\ast(x)={\rm exp}(x)$ and $Q_c\phi(x)=\phi(x)$.
	In this case, we recover the well-known characterization of the optimal entropic optimal transport plan.
	That is, 
	\begin{equation*}
		\frac{\d\Q^\star}{\d\P} = \frac{\d\muin}{\d\nu_0} \e^{-C(X)} \e^{-\varphi^\star(X_T) -\psi^\star(X_0)}.
	\end{equation*}
\end{remark}
% {\color{blue}In the case where $\Wc_c$ is a distance, we can actually strenghted our existence result of the Schr\"odinger bridge $\Q^*$ by providing a verification theorem xxxx bring the verification that we could prove in general here?}

% discuss assumptions and results here
% \subsection{Related literature}
% discuss entropic example, explain that (i) leads to $(f,g)$ transform and 
We will now state a verification theorem.
In the statement below, we use the time reversal mapping $\overleftarrow{\Tc}:\Omega\longrightarrow \Omega$ given by $\overleftarrow{\Tc}(\omega):= \overleftarrow{\omega}$ where $\overleftarrow{\omega}_t:= \omega_{T-t}$ and the time-reversed measure $\overleftarrow{\P} = \P\circ \overleftarrow{\Tc}$.
	We distinguish two cases in the following assumption:
	
\begin{assumption}\label{assump.data.3}
{\color{black} $\P_{0T}\sim\muin\otimes\mufin$ and one of the following holds: $(i)$ $\muin=\nu_0$, $(ii)$ $\Ic_\ell$ is subadditive}.
\end{assumption}

\begin{theorem}
\label{thm:property.main.2}
	Let {\rm Assumptions \ref{assump.data} and \ref{assump.data.3}} hold, and fix $c(x,\rho)=\int_{\R^\xdim} \1{x\neq y}\rho(\d y)$. 
%	Suppose that one of the following holds: $(i)$ $\muin=\nu_0$, $(ii)$ $\Ic_\ell$ is subadditive.\medskip
	
	Let $\hat \varphi \in B_{b,p}(\R^\xdim)$ satisfy $\E^{\muin\otimes \P_\cdot}[\ell^\ast(( \hat \varphi  (X_T)+ C(X))^+)]< + \infty$, $\hat\psi$ be measurable, and let $\Q\in \Pc_c(\muin,\mufin) $ be given by
	\begin{equation*}
			\dfrac{\d \Q}{\d \P} =\frac{\d\muin }{\d\nu_0} (X_0)   \partial_x\ell^*\big(  - \hat \varphi(X_T) - C(X) -\hat \psi(X_0)\big).
	\end{equation*}
	Then it holds:
\begin{enumerate}[label=$(\roman*)$, ref=.$(\roman*)$,wide,  labelindent=0pt]
	\item  The probability measure $ \Q$ is primal optimal and the function $\hat \varphi$ is dual optimal.
	\item  The functions $\hat\varphi$ and $\hat\psi$ satisfy the Schr\"odinger system
	\begin{align}\label{eq.prop.system}
	\begin{cases}
		1= \E^{\P} \big[  \partial_x\ell^*\big(  - \hat \varphi(X_T) - C(X) -\hat \psi(X_0)\big) |X_0=x\big], \; \muin\ae\\
		1= \E^{\overleftarrow \P} \big[  \partial_x\ell^*\big(  - \hat \varphi(\overleftarrow X_T) - C(\overleftarrow X) -\hat \psi(\overleftarrow X_0)\big) |\overleftarrow X_0=x\big], \;\mufin\ae
	\end{cases}
	\end{align}
\end{enumerate}
\end{theorem}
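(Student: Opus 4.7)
The plan is to first derive the Schrödinger system of Part (ii) from the admissibility $\Q\in\Pc_c(\muin,\mufin)$, and then use it in Part (i) to establish dual optimality of $\hat\varphi$ via an envelope-theorem computation; combined with Theorem 3.3 and a uniqueness argument this forces $\Q$ to coincide with the primal optimum $\Q^\star$ of Theorem 3.1(i). Writing $h(x):=\d\muin/\d\nu_0(x)$, the first Schrödinger equation follows by disintegration: since $\Q\circ X_0^{-1}=\muin$ and $\P\circ X_0^{-1}=\nu_0$, one has $h(x)=\E^\P[\d\Q/\d\P\mid X_0=x]$ for $\nu_0$-a.e.\ $x$, and substituting the hypothesized form and cancelling $h(x)$ on its $\muin$-support yields the identity. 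For the second equation, \Cref{rmk.duality.tv} reduces $\Q\circ X_T^{-1}\=c\mufin$ to $\Q\circ X_T^{-1}=\mufin$ for the chosen $c$; pushing forward under $\overleftarrow\Tc$ gives $\overleftarrow\Q\circ \overleftarrow X_0^{-1}=\mufin$ under $\overleftarrow\P$, and the density $\d\overleftarrow\Q/\d\overleftarrow\P=(\d\Q/\d\P)\circ\overleftarrow\Tc$ inherits a $\partial_x\ell^\ast$-form with the time coordinates swapped, so that the same disintegration argument (now conditioning on $\overleftarrow X_0$) produces the second equation.

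For Part (i), note that the first Schrödinger equation is exactly the first-order condition for the scalar infimum defining $\Phi_{\P_x}(-\hat\varphi(X_T)-C(X))$ at $r=\hat\psi(x)$; strict convexity of $\ell^\ast$ (Assumption 2.1(i)) then makes $\hat\psi(x)$ the unique minimizer, so
\[
\Phi_{\P_x}\!\bigl(-\hat\varphi(X_T)-C(X)\bigr)=\E^{\P_x}\bigl[\ell^\ast(-\hat\varphi(X_T)-C(X)-\hat\psi(x))\bigr]+\hat\psi(x).
\]
Moreover $\varphi\mapsto\Psi^\varphi(\muin)-\langle\varphi,\mufin\rangle$ is concave, since $\Phi_{\P_x}$ is convex in its argument (being the infimum in $r$ of a jointly convex function), and it enters $\Psi^\varphi$ with a minus sign after an affine substitution in $\varphi$. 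Applying the envelope theorem at $\hat\varphi$ in the direction $\eta\in C_{b,p}(\R^m)$,
\[
\tfrac{\d}{\d t}\bigl[\Psi^{\hat\varphi+t\eta}(\muin)-\langle\hat\varphi+t\eta,\mufin\rangle\bigr]\Big|_{t=0}=\int\E^{\P_x}\bigl[\partial_x\ell^\ast(-\hat\varphi(X_T)-C(X)-\hat\psi(x))\eta(X_T)\bigr]\,\muin(\d x)-\langle\eta,\mufin\rangle,
\]
which by the identity $h(x)\partial_x\ell^\ast(\cdots)=\d\Q/\d\P$ equals $\E^\Q[\eta(X_T)]-\langle\eta,\mufin\rangle$ and so vanishes for all $\eta$ by $\Q\circ X_T^{-1}=\mufin$. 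Concavity then yields that $\hat\varphi$ is dual optimal.

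With $\hat\varphi$ dual optimal and the assumed integrability condition in force, Theorem 3.3 produces $\d\Q^\star/\d\P=h(X_0)\partial_x\ell^\ast(-\hat\varphi(X_T)-C(X)-\psi^\star(X_0))$ for some measurable $\psi^\star$; applying the disintegration of Part (ii) to $\Q^\star$ shows that $\psi^\star$ also satisfies the first Schrödinger equation. Strict convexity of $\ell^\ast$ forces $\psi^\star=\hat\psi$ $\muin$-a.e., and hence $\Q=\Q^\star$ is the primal optimum. The main obstacle is the rigorous envelope computation: it requires interchanging differentiation with the $\muin$-integral and the conditional expectation, which is precisely where the integrability hypothesis $\E^{\muin\otimes\P_\cdot}[\ell^\ast((\hat\varphi(X_T)+C(X))^+)]<\infty$ is used, via dominated convergence applied to $\partial_x\ell^\ast$. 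A secondary bookkeeping challenge is the time reversal in Part (ii), ensuring that $\d\overleftarrow\Q/\d\overleftarrow\P$ lines up with the precise arguments $(\overleftarrow X_T,C(\overleftarrow X),\overleftarrow X_0)$ appearing in the second equation of \eqref{eq.prop.system}.
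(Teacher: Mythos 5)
Your proof is correct in outline but takes a genuinely different route for Part~(i) from the paper's. The paper argues by a weak-duality sandwich: it plugs the hypothesized density into $-\Phi_{\P_x}$, uses the first Schr\"odinger equation (which, as in your argument, comes for free from $\Q\circ X_0^{-1}=\muin$) to recognize via \Cref{prop:dual.f.div} that the infimum defining $\Phi_{\P_x}$ is attained by $\Q_x$, so that $-\Phi_{\P_x}(-\hat\varphi(X_T)-C(X))=\E^{\Q_x}[\hat\varphi(X_T)+C(X)]+\Ic_\ell(\Q_x|\P_x)$; integrating against $\muin$, cancelling $\E^\Q[\hat\varphi(X_T)]-\langle\hat\varphi,\mufin\rangle$ from $\Q\circ X_T^{-1}=\mufin$, and applying \Cref{cor.chain} or subadditivity to lower-bound by $\E^\Q[C(X)]+\Ic_\ell(\Q|\P)\ge V_c(\muin,\mufin)$ closes the chain of inequalities, giving primal and dual optimality simultaneously. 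You instead compute a Gâteaux derivative of $G(\varphi)\coloneqq\Psi^\varphi(\muin)-\langle\varphi,\mufin\rangle$ at $\hat\varphi$ by an envelope argument, show it vanishes because $\Q\circ X_T^{-1}=\mufin$, and conclude by concavity; then you route through \Cref{thm:propertyMain} and uniqueness of $\psi^\star$ to identify $\Q=\Q^\star$. Your re-ordering, deriving \eqref{eq.prop.system} from admissibility first so that the first equation is already available as the inner-minimizer identity $\E^{\P_x}[\partial_x\ell^\ast(-\hat\varphi(X_T)-C(X)-\hat\psi(x))]=1$ for $\Phi_{\P_x}$, is a clean way to surface a step the paper uses only implicitly.

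A few points need tightening. (a) The interchange of $\tfrac{\d}{\d t}$ with the $\muin$-integral and the inner $\E^{\P_x}$-expectation does need justification; the stated integrability hypothesis together with the $\Delta_2$-type growth of $\ell^\ast$ in Assumption 2.1$.(i)$ and the fact that for the TV cost the test functions may be taken bounded (cf.\ \Cref{lemma.tv.ctransform}) is the right combination, as you anticipate. (b) A vanishing directional derivative plus concavity only shows that $\hat\varphi$ maximizes $G$ over $\hat\varphi+C_{b,p}$, which may sit in $B_{b,p}\setminus C_{b,p}$; since \eqref{eq:dualprobl.intro} takes the supremum over $C_{b,p}$ alone, you still need a weak-duality inequality $G(\varphi)\le V_c(\muin,\mufin)$ for bounded measurable $\varphi$ (which holds because $\E^\Q[\varphi(X_T)]=\langle\varphi,\mufin\rangle$ for every primal-feasible $\Q$) to conclude $G(\hat\varphi)=V_c(\muin,\mufin)$; the paper leaves the analogous extension to $B_{b,p}$ tacit as well. (c) You invoke ``strict convexity of $\ell^\ast$'' for uniqueness of $\hat\psi(x)$, but under Assumption 2.1$.(i)$ $\ell^\ast$ is not strictly convex in general --- for the $\chi^2$-divergence it is affine on a half-line; what gives uniqueness of the minimizing $r^\star$ is strict convexity of $\ell$ on $(0,\infty)$, and the correct reference is the uniqueness assertion in \Cref{prop:dual.f.div}. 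Finally, your detour through \Cref{thm:propertyMain} to get $\Q=\Q^\star$ is sound but slightly roundabout; the paper's sandwich yields primal optimality of $\Q$ directly. For Part~(ii) your argument agrees with the paper's in both structure and detail.
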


\begin{remark}
\label{rem:entropic-Schroedinger}
%Let us comment on the previous result.
	When the measure $\P$ is reversible, i.e. when $\overleftarrow{\P} = \P$, the system \eqref{eq.prop.system} becomes
	\begin{align*}
	\begin{cases}
		1= \E^{\P} \big[  \partial_x\ell^*\big(  - \hat \varphi(X_T) - C(X) -\hat \psi(X_0)\big) |X_0=x\big], \; \muin\ae\\
		1= \E^{ \P} \big[  \partial_x\ell^*\big(  - \hat \varphi( X_T) - C( X) -\hat \psi( X_0)|X_T=x], \;\mufin\ae
	\end{cases}
	\end{align*}
	To keep the paper self-contained we give the argument after the proof of {\rm \Cref{thm:property.main.2}} below.
	In fact, in the entropic case and when $\P$ is reversible, the system \eqref{eq.prop.system} reduces to the ``classical'' Schrödinger system. 
	In fact if $C(X) = \tilde C(X_0,X_T)$ for some function $\tilde C$ on $\R^\xdim\times \R^\xdim$, the equations become
	\begin{equation*}
	 	\begin{cases}
	 		\hat\psi(x) = -\log\E\big[\e^{-\hat\varphi(X_T) - \tilde C(x,X_T)}|X_0=x],\; \muin \as\\
	 		\hat\varphi(x) = -\log\E\big[\e^{-\hat\psi(X_0) - \tilde C(X_0,x)}|X_T=x],\; \mufin \as
	 	\end{cases}
	 \end{equation*} 
	derived, e.g., by {\rm \citet{leonard2014survey}}.
	This is not the only case of interest.
	As we will comment in {\rm \Cref{sec:examples}}, the case of the $\chi^2$-divergence \eqref{eq.prop.system} results in a tractable system of equations as well.
	In general, the system remains amenable to regression-type methodologies.
\end{remark}

% {\color{red}I would suggest to move this result after the next theorem, make it a proposition (in case we can do it only for $c=1_{x\neq y}$, which is what I have been able to get since our last meeting).}

% \begin{proposition}[Schrödinger system]\label{prop.system}
% Let $c(x,\rho)=\int_{\R^\xdim} \1{x\neq y}\rho(\d y)$ and $\Q\in {\rm Prob}(\Omega)\in \Pc_c(\muin,\mufin)$ such that
% 	\begin{align*}
% 			\dfrac{\d \Q}{\d \P} =\frac{\d\Q_0 }{\d\P_0} (X_0)   \partial_x\ell^*\big(  - \hat \varphi(X_T) - C(X) -\hat \psi(X_0)\big).
% 	\end{align*}
% Then, 
% \begin{align}\label{eq.prop.system}
% \begin{split}
% 1&= \E^{\P} \big[  \partial_x\ell^*\big(  - \hat \varphi(X_T) - C(X) -\hat \psi(X_0)\big) |X_0=x\big], \; \muin\ae\\
% 1&= \E^{\overleftarrow \P} \big[  \partial_x\ell^*\big(  - \hat \varphi(\overleftarrow X_0) - C(\overleftarrow X) -\hat \psi(\overleftarrow X_T)\big) |\overleftarrow X_0=x\big], \;\mufin\ae
% \end{split}
% \end{align}
% \end{proposition}

\subsection{Flows of marginals for non-entropic dynamic Schrödinger bridges}

We now turn our attention to describing the flow of marginal measures $(\Q_t^\star)_{t\in [0,T]}$, where $\Q^\star_t\coloneqq \Q^\star\circ X_t^{-1}$ denotes the marginal law of $X$, induced by $\Q^\star$, the optimal solution to the non-entropic Schrödinger problem.
	This, of course, requires the SDE \eqref{eq.dyn.P} to be Markovian with reversible law $\P$.
Our first result provides a characterization in terms of Markovian projections, in the spirit of \citet{follmer1988random}.

\begin{theorem}
\label{them:charac.marginal}
	Let {\rm Assumptions \ref{assump.data}} and {\rm \ref{assump.data.2}} hold. 
	Assume that $C=0$, $c$ is such that $\Wc_c(\mu,\nu)=0$ if and only if $\mu = \nu$, that, for $(t,x)\in [0,T]\times C([0,T],\R^\xdim)$, $b(t,x) = -\partial_x U(x(t))/2$ for some continuously differentiable function $U:\R^\xdim\longrightarrow \R$, and let $\nu_0$ be the invariant measure on \eqref{eq.dyn.P}.
	Further assume that $V_c(\muin,\mufin)$ and $V_c(\mufin,\muin)$ admit dual optimizers $\varphi^\star \in B_{b,p}(\R^m)$ and $\overleftarrow{\varphi}\in B_{b,p}(\R^m)$ respectively.
Then there are two predictable processes $\alpha^\star$ and $\overleftarrow{\alpha}$ such that 
%for almost all $t$ and for $\Q^\star_t$-almost all $x$ we have
 	\begin{equation}
 	\label{eq:non-markovian.charac.statement}
		\E^{\Q^{\star}}\Big[ \alpha^\star_t  + \overleftarrow{\alpha}_{T-t} \circ \overleftarrow{\Tc}  \,  \big |X_t = x \Big]  -\partial_x U(x)= \partial_x \log(\Q^{\star}_t(x)),\;  {\color{black} \d t\otimes \d\Q_t^\star\ae}~\text{on } [0,T]\times\R^\xdim,
	\end{equation}
	where $\Q^\star_t$ is the probability density function of $\Q^\star\circ X_t^{-1}$, and {\color{black} where the derivative is in the sense of distributions}.
\end{theorem}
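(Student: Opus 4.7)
The plan is to read off the required drift decompositions from the structural density provided by Theorem~\ref{thm:propertyMain}, and then to invoke the Haussmann--Pardoux identity (equivalently Nelson's duality between forward and backward drifts of a diffusion) to combine them into the score of the marginal density. The gradient structure $b=-\tfrac12\partial_x U$ is essential because it ensures that the $-\tfrac12\partial_x U$ contributions arising from the forward dynamics under $\Q^\star$ and from the time-reversed dynamics combine into the single $-\partial_x U$ on the right-hand side of \eqref{eq:non-markovian.charac.statement}.

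The first step is to apply Theorem~\ref{thm:propertyMain} with $C\equiv 0$, yielding
\[
\frac{\d\Q^\star}{\d\P} = \frac{\d\muin}{\d\nu_0}(X_0)\,\partial_x\ell^\ast\bigl(-Q_c\varphi^\star(X_T)-\psi^\star(X_0)\bigr).
\]
Because $X$ is Markov under $\P$, the density $\P$-martingale $D_t:=\E^{\P}[\d\Q^\star/\d\P\mid\Fc_t]$ factors as $D_t=\tfrac{\d\muin}{\d\nu_0}(X_0)\,g(t,X_t,X_0)$, where $g(t,x,x_0)=\E^\P[\partial_x\ell^\ast(-Q_c\varphi^\star(X_T)-\psi^\star(x_0))\mid X_t=x]$. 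The martingale property combined with It\^o's formula forces the drift of $g(t,X_t,X_0)$ to vanish, giving $\d g(t,X_t,X_0)=\partial_x g(t,X_t,X_0)\cdot \d W_t$; Girsanov's theorem then yields, under $\Q^\star$,
\[
\d X_t=\bigl(-\tfrac12\partial_x U(X_t)+\alpha^\star_t\bigr)\d t+\d W^{\Q^\star}_t,\qquad \alpha^\star_t:=\partial_x\log g(t,X_t,X_0),
\]
which defines the required predictable process $\alpha^\star$.

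The same procedure applied to the reverse problem $V_c(\mufin,\muin)$ -- whose dual optimizer $\overleftarrow\varphi\in B_{b,p}(\R^m)$ is assumed to exist -- combined with the invariance $\Ic_\ell(\Q|\P)=\Ic_\ell(\overleftarrow\Q|\overleftarrow\P)$ and the gradient form of $b$, produces a predictable process $\overleftarrow\alpha$ with the property that under $\overleftarrow{\Q^\star}:=\Q^\star\circ\overleftarrow\Tc$ the canonical process satisfies $\d X_t=(-\tfrac12\partial_x U(X_t)+\overleftarrow\alpha_t)\d t+\d\widetilde W_t$. To conclude, I would invoke the Haussmann--Pardoux formula applied to the Markovian projection of $\Q^\star$: if $\hat b^+_t(x)$ and $\hat b^-_t(x)$ denote the conditional forward and backward drifts of $X$ under $\Q^\star$, then $\hat b^+_t(x)-\hat b^-_t(x)=\partial_x\log\Q^\star_t(x)$. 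The forward decomposition above gives $\hat b^+_t(x)=-\tfrac12\partial_x U(x)+\E^{\Q^\star}[\alpha^\star_t\mid X_t=x]$, while combining the backward decomposition with the relation between the backward drift of $X$ under $\Q^\star$ and the forward drift of the canonical process under $\overleftarrow{\Q^\star}$ yields $-\hat b^-_t(x)=-\tfrac12\partial_x U(x)+\E^{\Q^\star}[\overleftarrow\alpha_{T-t}\circ\overleftarrow\Tc\mid X_t=x]$; summing these two identities produces \eqref{eq:non-markovian.charac.statement}. The main technical obstacle is the identification of the backward decomposition: when $\nu_0$ is not the invariant distribution $\propto\e^{-U}$ of the Langevin reference, Haussmann--Pardoux shows that $\overleftarrow\P$ carries an additional score term $\partial_x\log p_{T-t}$ in its drift, and one must verify that this contribution is absorbed into $\overleftarrow\alpha$ so that only the gradient part $-\tfrac12\partial_x U$ survives as the reference drift in the backward representation. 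This requires careful bookkeeping of the Radon--Nikodym chains linking $\P$, $\overleftarrow\P$, $\Q^\star$, and the optimizer of the reverse problem.
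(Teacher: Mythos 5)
Your strategy is the same one the paper follows: obtain the forward drift of the canonical process under $\Q^\star$ and under $\overleftarrow{\Q^\star}$ via Girsanov, then apply the F\"ollmer/Haussmann--Pardoux time-reversal identity to conclude that the sum of the two conditional drifts equals $\nabla\log\Q^\star_t$. (The paper arrives at the predictable controls $\alpha^\star$ and $\overleftarrow\alpha$ through the stochastic control reformulation of Lemma~\ref{lemma.ref} and Lemma~\ref{lem:phi.optimal=primal.optimal}, rather than by directly factoring the density martingale, but this is a cosmetic difference: both routes produce, by the martingale representation theorem, the predictable process for which $D_t$ is a stochastic exponential.)

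The genuine gap is the step you gesture at in your final paragraph but do not actually carry out: you never prove that $\overleftarrow{\Q^\star}$ is the primal optimizer of $V_c(\mufin,\muin)$. Without this identification, the backward drift you extract from the reverse problem's optimizer $\hat\Q$ describes $\hat\Q$, not $\overleftarrow{\Q^\star}$, and the Haussmann--Pardoux formula (which requires the forward and backward drift of \emph{the same} diffusion) cannot be applied. The paper closes this gap in Proposition~\ref{prop:charactirization.final}$(ii)$ by the chain
\[
	\Ic_\ell(\overleftarrow{\Q^\star}|\P) \ge V_c(\mufin,\muin) = \inf_{\Q\in \Pc_c(\mufin,\muin)}\Ic_\ell(\overleftarrow{\Q}|\overleftarrow{\P}) = \inf_{\Q\in \Pc_c(\muin,\mufin)}\Ic_\ell(\Q|\P) = \Ic_\ell(\Q^\star|\P) = \Ic_\ell(\overleftarrow{\Q^\star}|\P),
\]
which forces $\hat\Q = \overleftarrow{\Q^\star}$ by uniqueness. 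The first and last equalities require $\overleftarrow\P = \P$, i.e.\ reversibility of the reference, which is exactly why the argument fixes $\nu_0 = \varlambda$ (the invariant measure of the gradient Langevin diffusion). Your closing sentence worries about ``when $\nu_0$ is not the invariant distribution'' as if the theorem needed to cover that case; in fact the proof is carried out \emph{under} $\nu_0 = \varlambda$ and the extra score term in $\overleftarrow\P$ you mention never appears, precisely because $\overleftarrow\P = \P$. So the bookkeeping problem you flag is resolved by an assumption rather than by an absorption argument, and the missing link in your write-up is the explicit identification $\hat\Q = \overleftarrow{\Q^\star}$ enabled by reversibility, together with the equivalence $\Q\in\Pc_c(\mufin,\muin) \Leftrightarrow \overleftarrow\Q\in\Pc_c(\muin,\mufin)$, which is where the hypothesis $\Wc_c(\mu,\nu)=0 \Leftrightarrow \mu=\nu$ enters.
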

The above characterization result can be further specialized, depending on the choice of the function $\ell$.
We illustrate this with two corollary concerning popular choices of divergences.
\begin{corollary}[Relative entropy]
\label{cor:rel.ent.cha}
Let the assumptions of {\rm \Cref{them:charac.marginal}} hold and let $\ell(x) = x\log x - x+ 1$. Then we have:

\begin{enumerate}[label=$(\roman*)$, ref=.$(\roman*)$,wide,  labelindent=0pt]
	\item 	The {\rm \Cref{eq:non-markovian.charac.statement}} reduces to 
	\begin{equation*}
		f_t(x)   + g_{T-t}(x)  -\partial_x U(x ) = \partial_x \log(\Q^{\star}_t(x)),\;  \d t\otimes \d\Q_t^\star\ae~\text{on } [0,T]\times\R^\xdim,
	\end{equation*}
	for two Borel-measurable functions $f,g:[0,T]\times \R^\xdim\longrightarrow  \R^\xdim$.
	\item If in addition the functions $\partial_xU, Q_c\varphi^\star$ and $\overleftarrow{\varphi}$ are continuously differentiable. %, and $\xi\sigma\sigma^\top\xi\ge c|\xi|^2$ for some $c>0$.
	Then, $f$ and $g$ satisfy $f_t(x) = \partial_x v^\star(t,x)$ and $g(t,x) = \partial_x \overleftarrow{v}(t,x)$ where $v$ and $\overleftarrow{v}$ are solutions of the {\rm HJB} equation
	\begin{equation*}
%	\begin{cases}
	 	\partial_t v(t,x) - \partial_xv(t,x)\cdot  \partial_x U(x)  +\frac12\partial_{xx}v(t,x) -\frac12|\partial_xv(t,x)|^2 = 0, \, (t,x)\in [0,T)\times\R^\xdim,\\ 
%	 	 v(T,x) = Q_c\varphi(x)
%	 	\end{cases}
 	 \end{equation*} 
 	 with respective terminal conditions $v^{\star}(T,x) = Q_c\varphi^\star(x)$ and $\overleftarrow{v}(T,x) = \overleftarrow{\varphi}(x)$.
 	\end{enumerate}
\end{corollary}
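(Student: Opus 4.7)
The plan is to specialize the general characterization of {\rm \Cref{them:charac.marginal}} to the relative entropy and exploit the resulting product structure of the optimal density. Since $\ell(x)=x\log x - x + 1$ gives $\ell^{*}(y)=e^{y}-1$ and $\partial_{y}\ell^{*}(y)=e^{y}$, {\rm \Cref{thm:propertyMain}} specializes the optimal density to
\begin{equation*}
\frac{\d\Q^\star}{\d\P} \;=\; \rho_{0}(X_{0})\, e^{-Q_{c}\varphi^{\star}(X_{T})}, \qquad \rho_{0}(x)\;:=\;\frac{\d\muin}{\d\nu_{0}}(x)\, e^{-\psi^{\star}(x)},
\end{equation*}
so the density factors through the endpoints $(X_{0},X_{T})$. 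By the Markov property of $\P$, the corresponding density martingale obeys
\begin{equation*}
M_{t} \;:=\; \E^{\P}\!\Big[\tfrac{\d\Q^\star}{\d\P}\,\Big|\,\Fc_{t}\Big] \;=\; \rho_{0}(X_{0})\,h^{\star}(t,X_{t}), \qquad h^{\star}(t,x)\;:=\;\E^{\P}\!\big[e^{-Q_{c}\varphi^{\star}(X_{T})}\,\big|\,X_{t}=x\big],
\end{equation*}
and since $h^{\star}(t,X_{t})$ is itself a $\P$-martingale (Feynman--Kac), Itô's formula reduces $dM_{t}$ to $M_{t}\,\partial_{x}\log h^{\star}(t,X_{t})\,dW_{t}$. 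Girsanov then identifies the predictable drift $\alpha^{\star}$ of {\rm \Cref{them:charac.marginal}} with the $(t,X_{t})$-measurable function $\partial_{x}\log h^{\star}(t,X_{t})$, so the conditional expectation in \eqref{eq:non-markovian.charac.statement} collapses to $f_{t}(x):=\partial_{x}\log h^{\star}(t,x)$. Repeating the construction for the reversed problem $V_{c}(\mufin,\muin)$ under $\overleftarrow{\P}$ yields the analogous $g$, which proves part $(i)$.

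For part $(ii)$, the additional continuous differentiability of $\partial_{x}U$, $Q_{c}\varphi^{\star}$ and $\overleftarrow{\varphi}$ lets us upgrade $h^{\star}$, via interior parabolic (Schauder) regularity, to a classical solution of the backward Kolmogorov equation
\begin{equation*}
\partial_{t}h^{\star}+b\cdot\partial_{x}h^{\star}+\tfrac{1}{2}\partial_{xx}h^{\star}=0, \qquad h^{\star}(T,x)=e^{-Q_{c}\varphi^{\star}(x)}.
\end{equation*}
The Hopf--Cole transformation $v^{\star}:=-\log h^{\star}$ then rewrites this linear PDE as the semilinear HJB equation displayed in the corollary, with terminal data $v^{\star}(T,x)=Q_{c}\varphi^{\star}(x)$, and the chain rule expresses $f_{t}(x)=\partial_{x}\log h^{\star}(t,x)$ in terms of $\partial_{x}v^{\star}$ consistently with the sign convention used for $\alpha^{\star}$ in {\rm \Cref{them:charac.marginal}}. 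An entirely symmetric computation under $\overleftarrow{\P}$, for $\overleftarrow{h}(t,x):=\E^{\overleftarrow{\P}}[e^{-\overleftarrow{\varphi}(X_{T})}\,|\,X_{t}=x]$, delivers the same HJB for $\overleftarrow{v}:=-\log \overleftarrow{h}$; crucially, the Langevin hypothesis $b=-\partial_{x}U/2$ fixes the second-order part of the generator of $\overleftarrow{\P}$ to match that of $\P$ and makes the drift correction in the time reversal absorb cleanly into the $\partial_{x}U$ term of the HJB, so that both value functions satisfy the same equation.

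The principal obstacle will be justifying that $h^{\star}$ is a classical $C^{1,2}$ solution of the backward Kolmogorov PDE, and hence that $v^{\star}=-\log h^{\star}$ lies in $C^{1,2}$, under only the polynomial-growth and $C^{1}$ hypotheses on the data: one must either invoke interior parabolic regularity for operators with unbounded drift $b=-\partial_{x}U/2$ or interpret the HJB in the viscosity sense and argue by smooth approximation. A secondary technicality is handling the reversed problem when $\muin$ is not the stationary measure $e^{-U}$ of $\P$, since the time-reversed generator of $\overleftarrow{\P}$ then carries an additive Fokker--Planck correction $\partial_{x}\log p_{t}$ that must be tracked through the Hopf--Cole calculation to recover exactly the HJB formula displayed in the statement.
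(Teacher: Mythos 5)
Your argument is correct and reaches the same destination as the paper (a Hopf--Cole exponential change of variables), but through a slightly different door. The paper's route is control-theoretic: the processes $\alpha^\star$ and $\overleftarrow{\alpha}$ are first identified in Proposition \ref{prop:charactirization.final} as optimal controls of the stochastic control problems $\Psi^{\varphi^\star}(\muin)-\Ic_\ell(\muin|\nu_0)$ and $\Psi^{\overleftarrow{\varphi}}(\mufin)-\Ic_\ell(\mufin|\nu_0)$, and Proposition \ref{prop:optimal.control.Markovian.entropy} then establishes Markovianity of $\alpha^\star$ by rewriting the value process as a quadratic BSDE, invoking the Markov structure of the BSDE solution, and (for part (ii)) citing \cite{ma2002representation} to pass to a classical HJB solution. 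You instead start from the explicit density of Theorem \ref{thm:propertyMain}, observe that in the entropic case it factorizes through the endpoints, compute the density martingale $M_t=\rho_0(X_0)h^\star(t,X_t)$ with $h^\star(t,x)=\E^\P[e^{-Q_c\varphi^\star(X_T)}\,|\,X_t=x]$ directly from the Markov property of $\P$, and read off $\alpha^\star_t=\partial_x\log h^\star(t,X_t)$ from Girsanov. This is more elementary and bypasses the BSDE machinery entirely; however, you implicitly rely on Lemma \ref{lem:phi.optimal=primal.optimal} to identify the Girsanov drift of $\Q^\star$ with the $\alpha^\star$ appearing in \eqref{eq:non-markovian.charac.statement} (which the statement of Theorem \ref{them:charac.marginal} defines through the control problem). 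Your first flagged obstacle (classical $C^{1,2}$ regularity of $h^\star$, hence of $v^\star=-\log h^\star$, under only $C^1$ data and unbounded drift) is genuine, and it is the same gap that the paper closes by invoking \cite[Theorem 3.1]{ma2002representation}; either way some parabolic regularity input is needed. Your second flagged obstacle is a red herring for this corollary: Proposition \ref{prop:charactirization.final} already takes $\nu_0=\varlambda$ (the invariant measure), so $\overleftarrow{\P}=\P$ and no Fokker--Planck correction $\partial_x\log p_t$ for the reference measure enters; the correction you have in mind for $\Q^\star$ is already absorbed into \eqref{eq:non-markovian.charac.statement} via Föllmer's time-reversal formula, and all the corollary requires is to substitute Markovian $\alpha^\star$ and $\overleftarrow{\alpha}$ into that identity. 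Finally, be aware that the sign convention in your $f_t(x)=\partial_x\log h^\star=-\partial_x v^\star$ differs from the $f_t=\partial_x v^\star$ in the statement; the paper itself carries sign and coefficient inconsistencies between Proposition \ref{prop:optimal.control.Markovian.entropy} and Corollary \ref{cor:rel.ent.cha}, and your calculation is internally consistent, so this does not undermine your proof, but you should state once and for all which sign of the Hopf--Cole transform you use.
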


\begin{remark}
	Some remarks are in order concerning the above characterization of the marginals of the Schr\"odinger bridge:
	Based on the fact that the potentials $f$ and $g$ are also gradients, the characterization in {\rm \Cref{cor:rel.ent.cha}}
	%{\color{magenta} When the drift $b$ is the gradient of a (time-independent) vector field $U$, i.e. if $b(t,x) = \partial_x U(x)$}\todo{ it should be when $\ell$ is relative entropy, no?} then the characterization in Theorem \ref{them:charac.marginal} {\rm (ii)} (taking $\sigma$ to be the identity matrix) 
	becomes the well-known formula
	\begin{equation*}
		\Q_t^\star(x)  =  \exp(v^\star(t,x))\exp(\overleftarrow{v}(T-t,x))\exp(U(x) ).
	\end{equation*}
	
	We do not expect this property to extend to other divergence operators beyond the relative entropy.
	This is because the functions $f$ and $g$ are constructed through solutions of stochastic optimal control problems $($given in \eqref{eq:cont.problem}$)$.
	By verification theorems, such processes are constructed as minimizers of a Hamiltonian.
	The said Hamiltonian would typically not have closed-form minimizers.
	Nonetheless, we can obtain pointwise characterizations in some non-entropic cases, see, e.g., {\rm\Cref{cor:quad.cha}} below.

	Moreover, {\rm \Cref{them:charac.marginal}} also assumes that $\Wc_c$ is a distance.
	This assumption is important to relate the values $V_c(\muin,\mufin)$ and $V_c(\mufin,\muin)$, which is a key element of the proof.
	We believe that this assumption cannot be dropped.
\end{remark}

As discussed in the introduction, quadratic regularized optimal transport has been an active area in recent years, notably due to the sparsity property of optimal transport plans, see, e.g., \cite{gonzalez2025sparse,gonzalez2025sample,nutz2025quadratically,gonzalez2024sparsity,zhang2023manifold,wang2025quadratic,wiesel2025sparsity}.
	In the dynamic setting this corresponds to the choice of $\chi^2$-divergence, the flow of marginals of which the following result characterizes:
\begin{corollary}($\chi^2$-divergence)
\label{cor:quad.cha}
	Let the assumptions of {\rm \Cref{them:charac.marginal}} hold and let $\ell(x) = (x -1)^2/2$ if $x\ge0$ and $\infty$ else.
	Further assume that the functions $\partial_xU, Q_c\varphi^\star$ and $\overleftarrow{\varphi}$ are twice continuously differentiable with bounded derivatives. 
	Let $v$ and $\overleftarrow{v}$ solve the {\rm PDE}s
	\begin{equation}
	\label{eq:PDE.prop.quadratic1.intro}
	\begin{cases}
	 	\partial_t v(t,x) - \partial_xv(t,x)\cdot \partial_x U(x)   +\frac12\partial_{xx}v(t,x) -\frac12|\partial_x\tilde v(t,x)|^2 = 0, \, (t,x)\in [0,T)\times\R^\xdim,\\ 
	 	 v(T,x) = 0\\ 
	 	\partial_t \tilde v(t,x) - \partial_x\tilde v(t,x)\cdot \partial_x U(x)   +\frac12\partial_{xx}\tilde v(t,x)  = 0, \, (t,x)\in [0,T)\times\R^\xdim,\\ 
	 	 \tilde v(T,x) = \zeta(x)
	 	\end{cases}
 	\end{equation} 
 	with respective terminal conditions $\zeta(x) = Q_c\varphi^\star(x)$ and $\zeta(x) = \overleftarrow{\varphi}(x)$.
 	Let $Z$ be given by 
	\begin{equation*}
 		\d Z_t = -\partial_xv(t, X_t)\d W_t, \quad Z_0 =1.
 	\end{equation*}
	Then the density $\Q^\star_t$ of $\Q^\star\circ(X_t, Z_t)^{-1}$  exists and satisfies
 	\begin{equation}\label{flow.chisquare}
 		\Fc(t,x,z) + \overleftarrow  \Fc(T-t,x,z) = \Sigma\Sigma^\top(t,x,z)\nabla   \log(\Q^\star_t(x,z)) ,\;  {\color{black} \d t\otimes \d\Q_t^\star\ae}~\text{on } [0,T]\times\R^\xdim\times \R,
 	\end{equation}
 	where the functions $\Fc,\overleftarrow \Fc$ and $\Sigma$ are defined as
 	\begin{equation*}
 		\Fc(t,x,z) = 
		\begin{pmatrix}
			- \frac{\partial_xv(t,x)}{z} - \frac{\partial_x U(x)}2 \\
 		\frac{|\partial_xv(t,x)|^2}{z}+\frac{\partial_{xx} v(t,x)}2
		\end{pmatrix},\; 
 		\overleftarrow  \Fc(t,x,z) = 
		\begin{pmatrix}
			-\frac{\partial_x\overleftarrow  v(t,x)}{z} -\frac{ \partial_x U(x)}2  \\
 			\frac{|\partial_x\overleftarrow  v(t,x)|^2}{z} +\frac{\partial_{xx} v(t,x)}2
		\end{pmatrix},
		\text{ and, }
 		\Sigma(t,x,z) = 
			\begin{pmatrix}
			I_\xdim  \\
 			\partial_x v(t,x)
			\end{pmatrix}
	\end{equation*}
	where $I_\xdim$ is the identity matrix of $\R^{\xdim\times \xdim}$.
\end{corollary}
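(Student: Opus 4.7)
The plan is to exploit the special quadratic structure of the $\chi^2$-divergence, for which $\partial_x\ell^\ast(y)=1+y$, to obtain a \emph{linear} expression for the density $d\Q^\star/d\P$ via \Cref{thm:propertyMain}. The process $Z$ will then arise as the density martingale, so that $(X,Z)$ becomes a Markov diffusion under $\Q^\star$ to which the classical forward/backward identity for diffusions applies directly.

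First I would compute the density explicitly. Since \Cref{rmk.duality.tv} gives $Q_c\varphi^\star=\varphi^\star$ for the weak cost $c(x,\rho)=\int\1{x\neq y}\rho(dy)$, \Cref{thm:propertyMain} (taking $\mu_0=\nu_0$, the case covered by \Cref{assump.data.2}(i)) yields
$$\frac{d\Q^\star}{d\P}=1-Q_c\varphi^\star(X_T)-\psi^\star(X_0).$$
Substituting this into the first equation of the Schr\"odinger system \eqref{eq.prop.system} identifies $\psi^\star(x)=-\tilde v(0,x)$, where $\tilde v(t,x):=\E^\P[Q_c\varphi^\star(X_T)\mid X_t=x]$ is, by Feynman--Kac, the solution of the linear backward PDE appearing in \eqref{eq:PDE.prop.quadratic1.intro}. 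Consequently the density martingale admits the closed form
$$Z_t:=\E^\P\!\left[\tfrac{d\Q^\star}{d\P}\,\Big|\,\F_t\right]=1-\tilde v(t,X_t)+\tilde v(0,X_0),$$
and It\^o's formula combined with the PDE for $\tilde v$ gives $dZ_t=-\partial_x\tilde v(t,X_t)\cdot dW_t$ with $Z_0=1$, matching the SDE of the statement (after identifying the $v$ appearing in the SDE with $\tilde v$).

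Next I would compute the $\Q^\star$-dynamics of the augmented process $(X,Z)$. A direct Girsanov argument produces the drift $\tilde\mu=(b-\partial_x\tilde v/Z,\ |\partial_x\tilde v|^2/Z)$ and covariance $a:=\Sigma\Sigma^\top$, so $(X,Z)$ is a Markov diffusion under $\Q^\star$. I would then invoke the classical forward/backward identity for Markov diffusions,
$$\tilde\mu+\bar{\tilde\mu}=a\,\nabla\log\Q^\star_t+\nabla\cdot a,$$
where $\bar{\tilde\mu}$ denotes the drift of the time-reversed process. A direct computation shows $\nabla\cdot a=(0,\dots,0,\Delta\tilde v)^\top$; splitting this divergence symmetrically via $\Fc:=\tilde\mu+\tfrac12\nabla\cdot a$ and $\overleftarrow\Fc:=\bar{\tilde\mu}+\tfrac12\nabla\cdot a$ produces exactly the $\partial_{xx}v/2$ correction in the statement and reduces the identity to $\Fc+\overleftarrow\Fc=a\,\nabla\log\Q^\star_t$.

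The main obstacle I anticipate is identifying $\overleftarrow\Fc$ with the expression involving $\overleftarrow v$. The strategy is to repeat the construction above for the reversed problem $V_c(\mu_T,\mu_0)$, whose dual optimizer is $\overleftarrow\varphi$; the reversibility of $\P$ (guaranteed by $b=-\partial_xU/2$, which makes the generator self-adjoint with respect to $\e^{-U}\,dx$) ensures that the backward dynamics of $X$ under $\Q^\star$ coincide with the forward dynamics arising from the reversed problem. The delicate point is that the augmented state $Z$ is tailored to the forward direction, so the time reversal of $(X,Z)$ is not automatically the density martingale of the reversed problem, and one must verify that the Markov diffusion built from $\overleftarrow v$ yields the correct backward projection. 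The regularity hypotheses (twice continuously differentiable potentials with bounded derivatives) are precisely what legitimize the It\^o, Girsanov and Fokker--Planck computations throughout.
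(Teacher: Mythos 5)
Your overall strategy mirrors the paper's: obtain the $\Q^\star$-dynamics of the augmented state $(X,Z)$, invoke the forward/backward drift identity for the time-reversed Markov diffusion (the paper cites \cite{cattiaux2023time}), compute $\nabla\cdot(\Sigma\Sigma^\top)$ and split the correction symmetrically, and identify the backward drift with the forward drift of the reversed problem $V_c(\mufin,\muin)$. Where you diverge from the paper is in how you obtain the SDE for $Z$. The paper does this via the stochastic control route: Proposition \ref{prop:optimal.control.Markovian.quadratic} establishes, through an ansatz $V(t,x,z)=v(t,x)+z\tilde v(t,x)$ for the HJB equation together with a verification argument, that the optimal control for $\Psi^{\varphi^\star}(\muin)$ is $\alpha^\star_t=-\partial_x\tilde v(t,X_t)/Z_t$, from which $\d Z_t=-\partial_x\tilde v(t,X_t)\d W_t$ follows. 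Your route instead plugs $\partial_x\ell^\ast$ directly into \Cref{thm:propertyMain} and reads the SDE off the conditional density.

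There is a genuine gap in your derivation of the density martingale. For $\ell(x)=(x-1)^2/2$ on $[0,\infty)$ one has $\partial_x\ell^\ast(y)=(1+y)^{+}$, not $1+y$; see the paper's own computation in Section \ref{sec:examples}, item $(i)$. Writing $\d\Q^\star/\d\P=1-Q_c\varphi^\star(X_T)-\psi^\star(X_0)$ silently drops the positive part. Without that truncation, $\E^{\P}[\d\Q^\star/\d\P\mid\Fc_t]$ is linear in the conditional expectation $\tilde v(t,X_t)=\E^{\P}[Q_c\varphi^\star(X_T)\mid X_t]$, which is precisely what yields your closed form $Z_t=1-\tilde v(t,X_t)+\tilde v(0,X_0)$ and hence the linear SDE $\d Z_t=-\partial_x\tilde v(t,X_t)\d W_t$. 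But the truncation cannot in general be removed: the hypotheses of the corollary only control the derivatives of $Q_c\varphi^\star$, not its lower bound relative to $\psi^\star$, so $1-Q_c\varphi^\star(X_T)-\psi^\star(X_0)$ may well go negative on a set of positive $\P$-measure. On that event the true density martingale $M_t=\E^\P[(1-Q_c\varphi^\star(X_T)-\psi^\star(X_0))^+\mid\Fc_t]$ differs from $1-\tilde v(t,X_t)+\tilde v(0,X_0)$, and once $M$ hits zero it is absorbed there, whereas your linear $Z$ keeps diffusing. To make your route airtight you would need to add the hypothesis that $1-Q_c\varphi^\star(X_T)-\psi^\star(X_0)\ge 0$ $\P$-a.s.\ (equivalently, that the optimal transport plan has full support relative to $\P$), or else replicate the HJB/verification argument of Proposition \ref{prop:optimal.control.Markovian.quadratic}, which does not rely on the linearization and produces the Markovian control without needing positivity of the untruncated affine functional.

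Two smaller points. First, you state $\nabla\cdot a=(0,\dots,0,\Delta\tilde v)^\top$ and then split via $\Fc:=\tilde\mu+\tfrac12\nabla\cdot a$; the paper computes $\nabla\cdot(\Sigma\Sigma^\top)=(0_m,-\Delta v)^\top$ (the off-diagonal block of $\Sigma\Sigma^\top$ carries a minus sign, because $Z$'s volatility is $-\partial_x v$) and splits with a minus sign, $\Fc:=B_t-\tfrac12\nabla\cdot(\Sigma\Sigma^\top)$. Your two sign errors cancel and the final formula comes out right, but as written the intermediate computation is inconsistent. Second, as you note the statement's notation conflates $v$ and $\tilde v$; the $v$ appearing in the SDE for $Z$ and in $\Fc,\overleftarrow\Fc$ is the solution of the \emph{linear} backward equation with terminal data $\zeta$, which is $\tilde v$ in the notation of the proof.
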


\subsection{Examples}
\label{sec:examples}
Let us conclude this section with some examples
%We use this section to give examples 
of possible choices of divergences and weak costs $c$.
\paragraph*{Divergence.}
Beyond the classical case of relative entropy $\ell(x)=x\log x-x + 1$ discussed above, one could also consider the following choices of divergence for which we illustrate the form of our results.

\begin{enumerate}[label=$(\roman*)$, ref=.$(\roman*)$,wide,  labelindent=0pt]
\item \emph{$\chi^2$-divergence}. This corresponds to $\ell(x) = (x-1)^2/2$ if $x\ge0$ and $+\infty$ else.
In this case, the convex conjugate is $\ell^\ast(x) =x + x^2/2 $ for $x\ge -1$ and $\ell^\ast(x)=-1/2$ else.
In this case, the primal optimizer $\Q^\star$ is given by
\begin{equation*}
	\frac{\d\Q^\star}{\d \P} = \frac{\d \muin}{\d \nu_0}(X_0)  \Big(1- Q_c \varphi^\star(X_T) - C(X) - \psi^\star(X_0) \Big)^+
	%\mathbf{1}_{\{\varphi^\star(X_T) + C(X) + \psi^\star(X_0)\le 1\}}
\end{equation*}
where $\varphi^\star$ is a dual optimizer and $x^+$ denotes the positive part function; and if {\color{black} $c(x,\rho)=\int_{\R^\xdim} \1{x\neq y}\rho(\d y)$}, 
%on the set $\varphi^\star(X_T) + C(X) + \psi^\star(X_0)\le 1$,
 the functions $\varphi^\star$ and $\psi^\star$ satisfy the system
% \begin{equation*}
% 	\begin{cases}
% 		\psi^*(x)= -\E^{\P} \big[    \varphi^\star(X_T) + C(X) \big) |X_0=x\big], \; \muin\ae\\
% 		0= \E^{\overleftarrow \P} \big[   \varphi^\star(\overleftarrow X_T) + C(\overleftarrow X) + \psi^\star(\overleftarrow X_0)\big) |\overleftarrow X_0=x\big], \;\mufin\ae
% 	\end{cases}
% 	\end{equation*}
	{\color{black}
	\begin{equation*}
	\begin{cases}
		1 = \E^{\P} \big[ \big(1 -  \varphi^\star(X_T) - C(X) - \psi(X_0) \big)^+ |X_0=x\big], \; \muin\ae\\
		1= \E^{\overleftarrow \P} \big[ \big( 1 -   \varphi^\star(\overleftarrow X_T) - C(\overleftarrow X) - \psi^\star(\overleftarrow X_0)\big)^+ |\overleftarrow X_0=x\big], \;\mufin\ae
	\end{cases}
	\end{equation*}}

\item  \emph{Tsallis entropy.} Given in terms of $\ell(x)=\frac{x^q-1}{q-1}$, $x\geq 1$ and $+\infty$ else, for $q\neq 1$. 
	This family encompasses a rather large class of $f$-divergences that fits our setting for $q > 1$.
	The convex conjugate is $\ell^\ast(x) =\frac{1}{q-1}+ (\frac{q-1}{q} x)^{q/(q-1)}$ for $x\geq0 $ and $\ell^\ast(x)=\frac{1}{q-1}$ else.
	In this case, the primal optimizer $\Q^\star$ is given by 
	%{\color{red}I got a different constant. Please double check here.}
\begin{equation*}
	\frac{\d\Q^\star}{\d \P} =\Big(  \frac{q-1}{q}\Big)^{\frac1{q-1}}  \frac{\d \muin}{\d \nu_0}(X_0) \Big(  \big(- {\color{black} Q_c\varphi^\star(X_T) }- C(X) - \psi^\star(X_0) \big)^+\Big)^{\frac1{q-1}}.% \mathbf{1}_{\{-\varphi^\star(X_T) - C(X) - \psi^\star(X_0)\geq 0\}}
\end{equation*}

\item \emph{Squared-Hellinger divergence}. Defined by $\ell(x) = (1-\sqrt{x})^2$ if $x\ge0$ and $+\infty$ else.
	This choice offers a slight variation of the previous example.
	Though it does not satisfy the growth assumption at infinity, we illustrate the form of our results. 
	The convex conjugate is $\ell^\ast(x) =x/(1-x)$ for $x< 1$ and $\ell^\ast(x)=\infty$ else.
	In this case, the primal optimizer $\Q^\star$ is given by
\begin{equation*}
	\frac{\d\Q^\star}{\d \P} =  \frac{\d \muin}{\d \nu_0}(X_0)  \frac{1}{\big\{\big(1- Q_c \varphi^\star(X_T) - C(X) - \psi^\star(X_0) \big)^+\big\}^2}.
	 %\Big) \mathbf{1}_{\{-\varphi^\star(X_T) - C(X) - \psi^\star(X_0)<1\}}
\end{equation*}
%where $\varphi^\star$ is a dual optimizer

\end{enumerate}
% with $\Ic_\ell(\P^\alpha|\P)=\frac12 \E^{\P^\alpha}\big[\int_0^T |\alpha_t|^2\d t \big]$, let us illustrate other relevant choices of $\ell$ and their cost.
%	For illustrative purposes, we will compute their associated cost alongside a probability measure $\P^\alpha\ll\P$, as in \Cref{lemma.ref}, with $\alpha$ bounded. We thus have:
%
%\begin{enumerate}[label=$(\roman*)$, ref=.$(\roman*)$,wide,  labelindent=0pt]
%\item \emph{Chi-squared}, $\chi^2$. Described by the choice $\ell(x)=(x-1)^2$ and known to heavily penalize large deviations from the mean, as illustrated by the fact that
%\(
%  \Ic_\ell(\P^\alpha|\P)=\E^{\P^\alpha}\big[\exp \big( \int_0^T |\alpha_t|^2\d t \big) \big]-1.
%\)
%
%\item \emph{Hellinger.} Specified by $\ell(x)=1-\sqrt{x}$ and known to capture outliers
%\(
% \Ic_\ell(\P^\alpha|\P)=1- \E^{\P ^\alpha}\big[\exp \big( -\frac{1}8\int_0^T |\alpha_t |^2\d t \big) \big] 
%\)
%
%\item \emph{Tsallis.} Given in terms of $\ell(x)=\frac{x^q-1}{q-1}$, $q\neq 1$. 
%	The convex conjugate is $\ell^\ast(x) =\frac{1}{q-1}+ \frac{q-1}{q^{q/(q-1)}} x^{q/(q-1)}$ for $x<=0 $ and $\ell^\ast(x)=\frac{1}{q-1}$ else.
%
%\end{enumerate}

%Let us now emphasize that in the context of this paper, we will illustrate our findings in terms of the $\chi^2$ divergence, which in particular satisfies the conditions in \Cref{assump.data}.
%	We leave as the subject of further research to study the implications of other choices fitting our analysis, such as the Tsallis divergence for $q>1$.

\paragraph*{Weak transport costs.}
Let us first note that the weak OT problem \eqref{eq.weaktransport} admits the probabilistic interpretation
\begin{align*}
 \Wc_c (\mu,\nu)=\inf_{X\sim\mu,Y\sim \nu} \E[ c(X,\E[Y|X])],
\end{align*} 
revealing how, though initially motivated by applications to geometric inequalities, weak optimal transport theory highlights the relevance and implications of imposing structural constraints on feasible plans beyond fixed marginals.\medskip

We now detail some instances of costs $c$ and elucidate their associated relation $\Q_T\coloneq \Q\circ X_T^{-1}\overset{c}{=}\mufin$ induced by $\Wc_c (\Q_T,\mufin)=0$. 
	We refer the reader to \cite{gozlan2020mixture,gozlan2017kantorovich} for a detailed account of these instances.

\begin{enumerate}[label=$(\roman*)$, ref=.$(\roman*)$,wide,  labelindent=0pt]
\item \emph{Total variation}. As mentioned above, when $c(x,\rho)=\int_{\R^\xdim} \1{x\neq y}\rho(\d y)$, $\Wc_c$ is  the total variation distance and $\Q_T \overset{c}=\mufin$ if and only if $\Q_T = \mufin$ and in this case $Q_c\varphi = \varphi$. 

\item \emph{Marton’s cost.} Takes its name from the work \cite{marton1996bounding} and further studied in \cite{gozlan2017kantorovich}. The Marton’s cost functional is given by
\( 
c(x, \pi_x ) = \theta \big(\int_{\R^\xdim} \delta(\textup{c}(x, y))\pi_x (\d y)\big),
\)
where $\textup{c}:\R^\xdim\times\R^\xdim\longrightarrow \R\cup\{+\infty\}$ is a classical cost,
$\delta: \R_+ \longrightarrow \R_+$ is l.s.c.~and $\theta :\R_+\longrightarrow [0,\infty]$ is convex. 
	It has the probabilistic interpretation
\begin{align*}
\Wc_{c}  (\Q_T,\mufin ) =\inf_{X_T\sim\Q_T ,Y\sim\mufin }\E \big[ \theta  \big( \E\big[ \delta (c (X_T,Y))|X_T ]\big) \big].
\end{align*} 
	%If $\textup{c}(x,y)=d(x,y)$, it is not hard to show that Marton's costs satisfy the first two properties in {\rm\Cref{assump.costs}}. 
	%If we further assume $\delta$ is convex and increasing, $x\longmapsto c(x,\rho)$ is convex.
	The classical transport problem with cost $\textup{c}$ is recovered with the choices $\theta(x) = \delta(x) = x$.
	In this case, $Q_c\varphi(x) =  \varphi^c(x)\coloneqq \inf_{y\in \R^\xdim}\big\{ \varphi(y)+\| x-y\|^p\big\}$, is the $c$-transform of $\varphi$, see \cite[Chapter 5]{villani2009optimal}.
	In particular, if $\textup{c}(x,y) = \|x - y\|^p$ and $\textup{c}(x,y) = \1{x\neq y}$ then $\Wc_c$ become the $p$-Wasserstein and the total variation, respectively.
	%  such as the $p$-Wasserstein or total variation distances, correspond to the choice 
	In this context, Brenier's Theorem \cite{benamou2000computational} shows that if $\textup{c}\equiv \|x-y\|^p, p>1$ and $\Q_T\ll {\rm Leb}$, then $\Q_T \overset{c}=\mufin$ implies there exists some convex function $\phi$ such that $\nabla \phi$ pushes forward $\Q_T$ onto $\mufin$.
	
\item \emph{Barycentric cost.} The Barycentric cost corresponds to 
\(
c(x, \pi_x ) = \theta\big( x-  \int_{\R^\xdim}y \pi_x (\d y) \big),
\)
 for $\theta:\R^\xdim \longrightarrow \R_+$ convex and l.s.c. That is, we have probabilistic interpretation
\begin{align*}
\Wc_{c}  (\Q_T,\mufin ) =\inf_{X_T\sim\Q_T ,Y\sim\mufin }\E \big[ \theta\big ( X_T-\E[Y|X_T]\big) \big].
\end{align*} 
	This choice captures the existence of \emph{martingale couplings} between two measures.
	%, i.e., $\pi(\d x,\d y)=\Q_T (\d x)\times\pi_x(\d y)\in \Pi(\mu,\nu): \pi_x(y)=x, \mu\ae$. 
	%Strassen's theorem \cite{strassen1965existence} gives the characterization: $\exists \pi\in\Pi_M(\mu,\nu)$ if and only if $\mu\leq_{\rm cvx}\nu$.	
	That is, $\Q_T\overset{c}=\mufin$ ensures that there exists a martingale coupling between $\Q_T$ and $\mufin$.
	In this case, $Q_c\varphi(x)=Q_\theta \varphi_{cvx}(x)$, where $\varphi_{cvx}$ denotes the greatest convex function $\varphi_{cvx}:\R^\xdim \longrightarrow \R$, such that $\varphi_{cvx}\leq \varphi$ and $ Q_\theta \varphi\coloneqq \inf_{y\in \R^\xdim}\big\{ \varphi(y)+\theta(x-y)\big\}$, see \cite{gozlan2020mixture, backhoff2019existence} for further details. 
	%In this case, $Q_c\varphi = XXXXXX$.} {\color{red}any reference?}

\item \emph{Distributionally robust costs.} This weak cost builds on the \emph{variance case with a quadratic cost studied} in \citet[Section 5.2]{alibert2019new}, the distributionally robust cost takes the form 
\[
c (x,\pi_x  )=  \int_{\R^\xdim} \textup{c} (x,y)\pi_x (\d y)  - \lambda^{-1}  {\rm Var}(\pi_x), \;\text{ where, }  {\rm Var}(\rho )\coloneqq  \int_{\R^\xdim} \Big\|y-\int_{\R^\xdim}y^\prime\rho  (d y^\prime)\Big\|^2  \rho  (\d y),
\]
	and $\lambda>0$, 
	%${\rm c}(x,y)$ denotes the classical cost, and $\zeta$, referred to as the perturbation cost, is meant as a nonlinear perturbation of the classical transport problem associated with ${\rm c}(x,y)$, see below.
	%We denote by $c_o$ that the classical cost associated with $c_{\rm M}$ which might be different from $c$.
	acts as an interpolation parameter between the classical cost $\textup{c}$ and the variance of $\pi_x$, thus favouring the spreading of the measures $\pi_x$.
	%The case $\textup{c}  =0$ leads to maximizing the variance of $\pi_x$ and thus $\pi^\star_x=\mufin$.
%	Notice that for any coupling,
%	\begin{align*}
%	 \int_{\R^\xdim} {\rm Var}(\pi_x) \mu(\d x) & =\int_{\R^\xdim}\int_{\R^\xdim}   \|y\|^2\pi_x(\d y) \mu(\d x)  -\int_{\R^\xdim} \Big \| \int_{\R^\xdim} y^\prime \pi_x(\d y^\prime)\Big \|^2  \mu(\d x) \leq 
%	 %\int_{\R^\xdim}  \|y\|^2\nu (\d y)  -\bigg\| \int_{\R^\xdim}  y^\prime \nu(\d y^\prime)\bigg\|^2=
%	 {\rm Var}(\nu)
%	\end{align*}
	Thanks to \cite[Lemma 5.5]{alibert2019new}, for any $r>0$, with $\lambda:={\rm Var}(\nu)/r^2$ and ${\rm c}(x,y)=\|x-y\|^2$ it holds that
	\[
	\Wc_{c} (\mu ,\nu) +r^2 \geq  W_2^2(\mu,\nu).
	\]
	Thus, $\Q_T \overset{c}= \mufin$ implies $ \Q_T \in B_{W_2} (\mufin, r)$,
	where $B_{W_2}(\nu, r )$ is the Wasserstein-2 ball of radius $r>0$ centered at $\nu\in\Pc_2(\R^\xdim)$. {\color{black} In this case, $Q_c\varphi (x) = \inf_{y\in\R^\xdim} \big\{\varphi(y) + \frac{\lambda}2 \|x-y\|^2\big\}$ is the Moreau-Yosida transform of $\varphi$.} 
	%{\color{red}I don't really understand this part. Could you please re-write it? there are three undefined terms.}
\end{enumerate}	

\section{Preliminaries}
\label{sec:preliminaries_and_examples}
Before presenting the proofs of the main results, we gather a few technical lemmas that will be used in the proofs. 
We first establish some convexity properties.
\begin{lemma}\label{lemma.convexcontrols}
	Let {\rm \Cref{assump.data}} hold. 
	The set $\Pc_c(\muin,\mufin)$ and the mapping $(\muin,\mufin)\longmapsto V_c(\muin,\mufin)$ are convex.
\end{lemma}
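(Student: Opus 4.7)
The plan is to prove the two assertions in sequence, with the convexity of $\Pc_c(\muin,\mufin)$ serving as the key ingredient for the convexity of the value functional. The core technical step, common to both parts, is to control the weak terminal constraint $\Wc_c(\Q\circ X_T^{-1},\mufin)=0$ under convex combinations by disintegrating candidate couplings. The convexity of $\Ic_\ell(\cdot|\P)$ (which is immediate from the convexity of $\ell$) and the linearity of $\Q\longmapsto \E^\Q[C(X)]$ then take care of the objective.

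For the first assertion, I would take $\Q_1,\Q_2\in \Pc_c(\muin,\mufin)$, $\lambda\in[0,1]$, and set $\Q:=\lambda\Q_1+(1-\lambda)\Q_2$. The initial marginal is immediate: $\Q\circ X_0^{-1}=\muin$. Setting $\mu_i:=\Q_i\circ X_T^{-1}$, the hypothesis $\Wc_c(\mu_i,\mufin)=0$ yields couplings $\pi_i\in\Pi(\mu_i,\mufin)$ with $c(x,\pi_{i,x})=0$ for $\mu_i\ae x$. Put $\mu:=\lambda\mu_1+(1-\lambda)\mu_2=\Q\circ X_T^{-1}$ and $\pi:=\lambda\pi_1+(1-\lambda)\pi_2\in\Pi(\mu,\mufin)$. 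Since $\mu_i\ll\mu$, write $f_i:=\d\mu_i/\d\mu$, so $\lambda f_1+(1-\lambda)f_2=1$ $\mu\ae$, and disintegration gives $\pi_x=\lambda f_1(x)\pi_{1,x}+(1-\lambda)f_2(x)\pi_{2,x}$. Applying the linear convexity of $\rho\longmapsto c(x,\rho)$ from \Cref{assump.data}\ref{assump.data.2} pointwise (with the convex combination coefficients $\lambda f_1(x),(1-\lambda)f_2(x)\in[0,1]$ summing to $1$) and integrating against $\mu$ yields
\begin{equation*}
\int_{\R^\xdim} c(x,\pi_x)\mu(\d x)\leq \lambda\int_{\R^\xdim} c(x,\pi_{1,x})\mu_1(\d x)+(1-\lambda)\int_{\R^\xdim} c(x,\pi_{2,x})\mu_2(\d x)=0,
\end{equation*}
so $\Wc_c(\Q\circ X_T^{-1},\mufin)=0$ and $\Q\in\Pc_c(\muin,\mufin)$.

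For the second assertion, fix $(\muin^j,\mufin^j)_{j=1,2}$, $\lambda\in[0,1]$, and $\eps>0$, and select near-optimizers $\Q_j\in\Pc_c(\muin^j,\mufin^j)$ with $\E^{\Q_j}[C(X)]+\Ic_\ell(\Q_j|\P)\leq V_c(\muin^j,\mufin^j)+\eps$. The same disintegration argument as above, applied with endpoints $(\lambda\muin^1+(1-\lambda)\muin^2,\lambda\mufin^1+(1-\lambda)\mufin^2)$, shows that $\Q:=\lambda\Q_1+(1-\lambda)\Q_2$ is admissible for $V_c(\lambda\muin^1+(1-\lambda)\muin^2,\lambda\mufin^1+(1-\lambda)\mufin^2)$. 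Linearity of the cost term and the convexity of $\Ic_\ell(\cdot|\P)$—which follows from the convexity of $\ell$ applied to the densities $\d\Q_j/\d\P$—give
\begin{equation*}
\E^\Q[C(X)]+\Ic_\ell(\Q|\P)\leq \lambda\big(\E^{\Q_1}[C(X)]+\Ic_\ell(\Q_1|\P)\big)+(1-\lambda)\big(\E^{\Q_2}[C(X)]+\Ic_\ell(\Q_2|\P)\big),
\end{equation*}
and sending $\eps\downarrow 0$ delivers the convexity of $V_c$.

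The only step I expect to require care is the disintegration manipulation: one cannot assume $\mu_1\ll\mu_2$ or vice versa, but dominating both by $\mu=\lambda\mu_1+(1-\lambda)\mu_2$ resolves this, and the pointwise use of linear convexity of $c(x,\cdot)$ with $x$-dependent convex weights $\lambda f_1(x)$ and $(1-\lambda)f_2(x)$ is exactly what \Cref{assump.data}\ref{assump.data.2} is designed to accommodate. Everything else is a direct consequence of the convexity of $\ell$ and linearity of expectation.
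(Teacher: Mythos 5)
Your proof is correct, but it takes a genuinely different route from the paper. For the key step---showing that $\mu\longmapsto \Wc_c(\mu,\mufin)$ is convex (so that the constraint $\Wc_c(\Q\circ X_T^{-1},\mufin)=0$ is preserved under convex combinations)---the paper invokes the Kantorovich-type duality formula $\Wc_c(\mu,\mufin)=\sup_{\varphi\in C_{b,p}}(\langle Q_c\varphi,\mu\rangle-\langle\varphi,\mufin\rangle)$ from \citet[Theorem~1.3]{backhoff2019existence}, which exhibits $\Wc_c(\cdot,\mufin)$ as a supremum of affine functionals of $\mu$ and hence convex with no further work. You instead argue directly on the primal side: mix the couplings $\pi=\lambda\pi_1+(1-\lambda)\pi_2$, disintegrate against the dominating measure $\mu=\lambda\mu_1+(1-\lambda)\mu_2$ to get $\pi_x=\lambda f_1(x)\pi_{1,x}+(1-\lambda)f_2(x)\pi_{2,x}$ with $x$-dependent weights summing to $1$, and apply the linear convexity of $c(x,\cdot)$ pointwise. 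This is more elementary and self-contained, making explicit exactly where Assumption~\ref{assump.data}\ref{assump.data.2} enters, whereas the paper outsources that to the duality theorem. One small point of hygiene: your phrasing ``$\Wc_c(\mu_i,\mufin)=0$ yields couplings $\pi_i$ with $c(x,\pi_{i,x})=0$ $\mu_i$-a.e.'' presumes attainment of the infimum in the weak OT problem, which does hold under Assumption~\ref{assump.data} but is itself a (standard) result; you could sidestep it entirely by taking $\eps$-optimal $\pi_i$ and sending $\eps\downarrow 0$, since your chain of inequalities delivers $\Wc_c(\mu,\mufin)\leq\lambda\int c(x,\pi_{1,x})\mu_1(\d x)+(1-\lambda)\int c(x,\pi_{2,x})\mu_2(\d x)$ for arbitrary $\pi_i$. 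The second half of your argument (near-optimizers, linearity of the cost term, convexity of $\Ic_\ell$) matches the paper's in substance.
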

\begin{proof}
%Let us argue that $\Pc_c(\muin,\mufin)$ is convex. 
	%That $\Pc(\muin)$ is convex is clear.
	We begin by convexity of $\Pc_c(\muin,\mufin)$.
	Let $\overline \Q = \lambda \Q^1 + (1 - \lambda)\Q^2$ for $\Q^1,\Q^2\in \Pc_c(\muin,\mufin)$.
	It is clear that $\overline \Q \circ X^{-1}_0=\muin$, thus $\bar \Q	\in \Pc(\muin)$.
	Recall from \cite[Theorem 1.3]{backhoff2019existence} the duality formula for $\Wc_c$
	\begin{align}\label{eq.dualitywot}
		\Wc_c(\mu , \mufin) = \sup_{\varphi \in C_{b,p}(\R^d)}(\langle Q_c\varphi,\mu \rangle -\langle \varphi,\mufin\rangle  ).
	\end{align}	
	Note that $\overline \Q \circ X^{-1}_T=\lambda \Q^1\circ X^{-1}_T+(1-\lambda) \Q^2\circ X^{-1}_T$, and, since $c$ is nonnegative, it follows that 
	\[
	0\leq \Wc_c(\overline \Q\circ X^{-1}_T,\mufin ) \le \lambda \Wc_c(\Q^1\circ X^{-1}_T, \mufin) + (1-\lambda)\Wc_c(\Q^2\circ X^{-1}_T, \mufin ) = 0.
	\]
	Hence, $\bar \Q\in \Pc(\muin, \mufin)$.

	\medskip

	Regarding convexity of $V_c$, let $\lambda\in (0,1)$, $\muin^i,\mufin^i\in \Pc(\R^d)$, $i=1,2$ be given, and $\Q^i \in \Pc_c(\muin^i,\mufin^i)$.
	Let $\overline \Q = \lambda \Q^1 + (1 - \lambda)\Q^2$ and define ${\overline\mu}_0$ and ${\overline \mu}_T$ similarly.
	We claim that $\overline \Q\in \Pc_c({\overline\mu}_0,{\overline\mu}_T)$.
	In fact, $\overline \Q\circ X^{-1}_0={\overline\mu}_0$ is clear.
	Furthermore,
	 $\overline \Q\circ X^{-1}_T=\lambda \Q^1\circ X^{-1}_T+(1-\lambda) \Q^2\circ X^{-1}_T$, and since $c$ is nonnegative, it follows from \eqref{eq.dualitywot} that 
	\[
	0\leq \Wc_c(\overline \Q\circ X^{-1}_T,{\overline \nu}_T ) \le \lambda \Wc_c(\Q^1\circ X^{-1}_T, \mufin^1) + (1-\lambda)\Wc_c(\Q^2\circ X^{-1}_T, \mufin^2 ) = 0.
	\]

	Therefore, by convexity of $\ell$,
	\(
		V_c({\overline\mu}_0,{\overline\mu}_T)  \le\E^{\overline{\Q}}[C(X)] + \Ic_\ell\big(\overline \Q  |\P\big) \leq  \lambda\{\E^{\Q^1}[C(X)] + \Ic_\ell\big( \Q^1  |\P\big)\} + (1 - \lambda)\{\E^{\Q^2}[C(X)] +\Ic_\ell\big( \Q^2  |\P\big)\} .
	\)
	%{\color{red}the last equality above is correct only if $f$ is decomposed as $f(t,{\bf x}, u, \mu) = f_1(t, {\bf x}, u) + f_2(\mu)$. Otherwise we need a new idea.}
	Since $\Q^1$ and $\Q^2$ were taken arbitrary we deduce that $V_c({\overline\mu}_0,{\overline\mu}_T) \leq \lambda  V_c(\muin^1,\mufin^1)  +(1-\lambda)V_c(\muin^2,\mufin^2) $.\qedhere
\end{proof}

\begin{proposition}
\label{prop.v.lsc}
	Let {\rm \Cref{assump.data}} hold.
	The function $\Pc(\R^d)\times \Pc(\R^d)\ni (\mu,\nu)\longmapsto V_c(\mu,\nu) \in \R \cup\{+\infty\}$ is lower semicontinuous for the topology of weak convergence.
\end{proposition}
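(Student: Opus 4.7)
The plan is to prove the result directly from the primal formulation using a standard compactness-plus-lower-semicontinuity argument, with the weak-target constraint handled through the narrow lower semicontinuity of $\Wc_c$. Let $(\muin^n,\mufin^n)\to(\muin,\mufin)$ narrowly in $\Pc(\R^d)\times\Pc(\R^d)$. We may assume $L:=\liminf_n V_c(\muin^n,\mufin^n)<\infty$ and pass to a subsequence along which the liminf is a limit. For each $n$, pick a $1/n$-optimizer $\Q^n\in\Pc_c(\muin^n,\mufin^n)$, i.e., $\E^{\Q^n}[C]+\Ic_\ell(\Q^n|\P)\le V_c(\muin^n,\mufin^n)+1/n$. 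Since $C$ is bounded below and the sum is bounded, both $\E^{\Q^n}[C]$ and the entropies $\Ic_\ell(\Q^n|\P)$ are bounded uniformly in $n$.

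The next step is to extract a narrow limit $\Q^\star$ of $\{\Q^n\}$. By the superlinearity $\ell(x)/x\to\infty$ from \Cref{assump.data}.(i), the de la Vallée–Poussin criterion yields uniform $\P$-integrability of the densities $\{\d\Q^n/\d\P\}$. Combined with tightness of the single reference measure $\P$ on the Polish space $\Omega$, this gives uniform tightness of $\{\Q^n\}$ via the standard estimate $\Q^n(K^c)\le \eps+R\cdot\P(K^c)+\E^\P[(\d\Q^n/\d\P)\mathbf{1}_{\d\Q^n/\d\P\ge R}]$. Prokhorov yields a narrowly convergent subsequence $\Q^{n_k}\to\Q^\star$.

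To verify $\Q^\star\in\Pc_c(\muin,\mufin)$, continuity of the evaluation maps $X_0,X_T$ gives $\Q^{n_k}\circ X_0^{-1}\to\Q^\star\circ X_0^{-1}$ and $\Q^{n_k}\circ X_T^{-1}\to\Q^\star\circ X_T^{-1}$ narrowly, so $\Q^\star\circ X_0^{-1}=\muin$. For the weak terminal constraint, we invoke the narrow lower semicontinuity of the weak-OT functional $\Wc_c$ under joint narrow convergence of the two marginals — this follows from the joint lower semicontinuity and linear convexity of $c$ stated in \Cref{assump.data}.(ii), and can be established via the duality \eqref{eq.dualitywot} as a supremum of narrowly lsc affine functionals, see \cite{backhoff2019existence}. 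Thus
\[
\Wc_c(\Q^\star\circ X_T^{-1},\mufin)\le\liminf_k \Wc_c(\Q^{n_k}\circ X_T^{-1},\mufin^{n_k})=0,
\]
so $\Q^\star\circ X_T^{-1}\=c\mufin$.

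Finally, one shows narrow lower semicontinuity of the objective. The divergence $\Q\mapsto\Ic_\ell(\Q|\P)$ is a classical narrowly lsc functional, expressible via the Donsker–Varadhan/Legendre–Fenchel formula $\Ic_\ell(\Q|\P)=\sup_{f}\bigl\{\E^\Q[f]-\E^\P[\ell^*(f)]\bigr\}$ over $f\in C_b$ as a supremum of narrowly continuous affine functionals in $\Q$. For the transport cost $\E^\Q[C]$, when $C$ is lower semicontinuous Portmanteau gives the desired inequality immediately; when $C$ is merely Borel, combining uniform integrability of the densities (so that $\d\Q^{n_k}/\d\P\rightharpoonup\d\Q^\star/\d\P$ weakly in $L^1(\P)$ by Dunford–Pettis) with the positivity of $C+M$ and Fatou yields the same bound. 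Assembling the three ingredients,
\[
V_c(\muin,\mufin)\le \E^{\Q^\star}[C]+\Ic_\ell(\Q^\star|\P)\le\liminf_k\bigl\{\E^{\Q^{n_k}}[C]+\Ic_\ell(\Q^{n_k}|\P)\bigr\}=L.
\]
The main obstacle is the terminal step: narrow lower semicontinuity of $\Wc_c$ under joint narrow convergence of \emph{both} marginals is essential for passing to the limit in the weak target constraint, and it is here that the structural hypotheses (joint lsc and linear convexity) on $c$ from \Cref{assump.data}.(ii) play a decisive role; secondarily, passing to the limit in $\E^\Q[C]$ without an lsc assumption on $C$ hinges on the weak $L^1(\P)$ convergence of densities, which is itself a consequence of the entropy bound.
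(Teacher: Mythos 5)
Your argument is essentially the paper's argument: bound the entropy via a $1/n$-optimal sequence, use superlinearity of $\ell$ to get uniform integrability of the densities $Z^n = \d\Q^n/\d\P$, extract a limit, check that the limit is admissible, and pass to the limit in the objective using lower semicontinuity of $\Ic_\ell(\cdot|\P)$ and a truncation/Fatou argument for $\E^{\Q}[C]$. The only structural difference is cosmetic: you first extract a narrow limit $\Q^\star$ of $\{\Q^n\}$ via tightness and Prokhorov and then identify its density by Dunford--Pettis, whereas the paper extracts a weak $L^1(\P)$ limit $Z$ of the densities directly and defines $\Q$ through the closed martingale $M_t=\E[Z|\Fc_t]$; both routes land in the same place and you correctly reconcile them.

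Two small remarks. First, at the weak-target step your inequality
\(
\Wc_c(\Q^\star\circ X_T^{-1},\mufin)\le\liminf_k \Wc_c(\Q^{n_k}\circ X_T^{-1},\mufin^{n_k})=0
\)
is cleaner and in fact more accurate than the chain written in the paper (which replaces $\Q^{n_k}\circ X_T^{-1}$ by $\mufin^{n_k}$ and then uses $\Wc_c(\mufin^{n_k},\mufin)\to 0$; the first replacement only follows when $\Wc_c(\mu,\nu)=0\Rightarrow \mu=\nu$, which is not assumed at this level of generality, and the second would need continuity rather than lower semicontinuity). The joint lower semicontinuity of $\Wc_c$ in both arguments, which you invoke and which is what is actually needed, is the right tool. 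Second, for the cost term the ``Fatou'' step should be spelled out as a truncation argument: with $C+M\ge 0$, for each $N$ the function $(C\wedge N)+M\in L^\infty$, so weak $L^1$ convergence gives $\E[Z^{n_k}((C\wedge N)+M)]\to\E[Z((C\wedge N)+M)]$, and then monotone convergence in $N$ yields $\liminf_k\E^{\Q^{n_k}}[C]\ge\E^{\Q^\star}[C]$; weak $L^1$ convergence alone does not feed directly into the classical pointwise Fatou lemma. With these clarifications the argument is sound and matches the paper's.
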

\begin{proof}
%The proof follows ideas of the proof of \cite[Lemma 3.1]{mikami2008optimal}.
	Let $(\muin^n,\mufin^n)$ be a sequence of elements of $\Pc(\R^d)\times \Pc(\R^d)$ converging to $(\muin ,\mufin)$.
	Without loss of generality, we assume that $\liminf_{n\to \infty}V_c(\muin^n,\mufin^n) <\infty$.
	Thus, up to a subsequence, $(V_c(\muin^n,\mufin^n))_{n\in \N}$ is bounded.
	In particular, since $C$ is bounded from below, there is a constant $K>0$ such that for each $n\ge1$ there is $\Q^n\in  \Pc_c(\muin^n,\mufin^n)$ satisfying,
	\begin{align}
	\label{eq:lsc.eps-optimal}
		K>V_c(\muin^n,\mufin^n) - \E^{\Q^n}[C(X)]
		 \geq 
		   \E\big[\ell\big(Z^n \big)\big]
		  - 
		  \frac1n,\; 
		  \text{where } 
		  Z^n\coloneqq \dfrac{\d\Q^n}{\d\P}. 	
	\end{align}

Thus, the sequence $(Z^n)_{n\geq 1}$ is uniformly integrable and, consequently, tight.
\medskip
	
We claim that any of the weak limits of $(Z^n)_{n\geq 1}$ gives rise to a probability measure $\Q\in\Pc_c(\muin,\mufin)$ with $\Q\ll \P$.
	Let $Z$ be a weak limit of $(Z^{n_k})_{k\geq 1}$ and consider the process $M_t\coloneqq \E[ Z|\Fc_t]$, $t\in [0,T]$.
	The convexity of $\ell$ shows that $\sup_{t\in [0,T]} \E [\ell(M _t)]\leq  \E [\ell(Z)]<\infty$, and, by the growth assumption on $\ell$, $M$ is a uniformly integrable martingale.
	Since, $\E[Z^n]=1$, $n\geq 1$, we deduce from the convergence in law of $(Z^{n_k})_{k\geq 1}$ that $\E[Z]=\E[M_t]=1$, $t\in [0,T]$.
	With this, it follows that $\Q(A)\coloneqq \E[M_T \cdot \mathds{1}_{A}]$ is a well defined probability measure and $\Q\ll \P$.
	\medskip
	
To prove the claim it remains to show that $\Q\in\Pc_c(\muin,\mufin)$. Recall that $\Q^n\in  \Pc_c(\muin^n,\mufin^n)$, i.e., $\Q^{n}\circ X^{-1}_0=\muin^n$, $\Q^{n}\circ X^{-1}_T\=c\mufin^n$ for all $n\geq 1$ and $(\muin^n,\mufin^n)$ converges in law to $(\muin ,\mufin)$.
	Thus, for a bounded continuous function $f$
	\[
	\E^{\Q\circ X_0^{-1}}[f] =\E^\Q[f(X_0)]=\lim_{k\to \infty} \E^{\Q^{n_k}}[f(X_0)]=\lim_{k\to \infty} \E^{\muin^{n_k}}[f ]= \E^{\muin}[f],
	\]
	and similarly, the lower semicontinuity of $\mu\longmapsto \Wc_c( \mu,\mu_T)$, gives
	\[
	\Wc_c( \Q\circ X_T^{-1},\mu_T)\leq \liminf_{k\to \infty}\Wc_c( \Q^{n_k}\circ X_T^{-1},\mu_T)=\liminf_{k\to \infty}\Wc_c( \mufin^{n_k},\mu_T)=0.
	\]
	
We now conclude. Back in \eqref{eq:lsc.eps-optimal} we see that
	\[
	\liminf_{n\longrightarrow\infty} V(\muin^n,\mufin^n)\geq  \liminf_{n\longrightarrow\infty} \E[Z^nC(X)]+ \E[ \ell(Z^n)]\geq \E[M_TC(X)] + \E[ \ell(M_T)] \geq V(\muin,\mufin).\qedhere
	\]
\end{proof}

It will be useful to see the regularized optimal transport problem \eqref{eq.SP} as an optimal stochastic control problem.
To this end we show in the following lemma that elements of $\Pc(\muin,\mufin)$ can be written as stochastic exponential of local martingales.
Let $\H^{2}_{\rm loc}$ be the space of $\F$--predictable processes $\alpha$ such that $\P \big(\int_0^T\|\alpha_t\|^2\d t<\infty\big)=1$.

\begin{lemma}\label{lemma.ref}
Let {\rm \Cref{assump.data}} hold. 
Let
\begin{align*}
		\Ac(\muin) \coloneqq  \bigg\{\P^\alpha \in {\rm Prob}(\Omega):  \frac{\d\P^\alpha\!\!}{\d\P\hfill} =\1{\frac{\d\P^\alpha\!\!}{\d\P\hfill}>0}  Z_T,\, \d Z_t=\alpha_tZ_t\d W_t,\,{\color{black} Z_0=\frac{\d\muin }{\d\nu_0} (X_0)} ,\, \P\as, \, \alpha\in \H^{2}_{\rm loc},\, \P^\alpha\circ X_0^{-1}=\muin  \bigg\}
\end{align*}
and $J(\P^\alpha)\coloneqq\displaystyle \frac12\E^{\P^\alpha}\bigg[ \int_0^T \ell^{\prime\prime}(Z_t)Z_t \|\alpha_t\|^2 \d t\bigg]+\Ic_\ell(\muin|\nu_0)$.\footnote{$\Ac_c(\muin,\mufin)$ is defined analogous to $\Pc_c(\muin,\mufin)$.}
%$\Ac_c(\muin,\mufin)\coloneqq\{ \P^\alpha \in \Ac(\muin): \P^\alpha\circ X_T^{-1} \overset{c}{=} \mufin\}$, and
Then the following holds:
\begin{enumerate}[label=$(\roman*)$, ref=.$(\roman*)$,wide,  labelindent=0pt]
	 \item $\{\P^\alpha\in \Ac(\muin): J(\P^\alpha)<\infty\} = \{\Q\in \Pc(\muin): \Ic_\ell(\Q|\P)<\infty\}$ and $\Ic_\ell(\P^\alpha|\P) = J(\P^\alpha)$;
	 \item $V_c(\muin,\mufin) = \inf_{\P^\alpha\in \Ac_c(\muin,\mufin)}(\E^{\P^\alpha}[C(X)] + J(\P^\alpha) )$.
\end{enumerate}
% \begin{equation*}
% %\label{eq:SP:intro}
% 	\Vc_c(\muin,\mufin)\coloneqq  \inf_{\P^\alpha \in \Ac_c(\muin,\mufin)} J(\P^\alpha), \, J(\P^\alpha)\coloneqq \E^{\P^\alpha}\bigg[ \int_0^T \ell^{\prime\prime}(Z_t)Z_t \|\alpha_t\|^2 \d t\bigg]+\Ic_\ell(\muin|\nu_0) .
% \end{equation*}
% {\color{magenta} write it in terms of entropys}

% There is a one-to-one correspondence between the elements $\Q\in \Pc_c(\muin,\mufin) $ such that $\Ic_\ell(\Q|\P)<\infty$ and $\P^\alpha\in \Ac_c(\muin,\mufin)$ such that $J(\P^\alpha)<\infty.$
% 	Consequently, $V_c(\muin,\mufin) =\Vc_c(\muin,\mufin)$.

\begin{proof}
%We only show that there is a correspondence between the elements $\Q\in \Pc_c(\muin,\mufin) $ such that $\Ic_\ell(\Q|\P)<\infty$ and $\P^\alpha\in \Ac_c(\muin,\mufin)$ such that $J(\P^\alpha)<\infty.$
{\rm $(i)$}	Let $\Q\in\Pc(\muin)$ be such that $\Ic_\ell(\Q|\P)<\infty$.
 	Then $\Q\ll\P$, so that the density $\frac{\d\Q}{\d\P}$ exists.
	%Moreover, since $\sigma$ is invertible we have that $\F=\F^X$, where $\F^X\coloneqq (\Fc_t^X)_{t\in [0,T]}$, $\Fc_t^X\coloneqq \sigma(X_s:s\in [0,T])$.
	Letting $M_t\coloneqq \E [ Z |\Fc_t ]$, $Z\coloneqq \tfrac{\d\Q}{\d\P}$, the convexity of $\ell$ shows that $\sup_{t\in [0,T]} \E [\ell(M _t)]\leq  \E [\ell(Z)]=\Ic_\ell(\Q|\P)<\infty$, and thus $M$ is a uniformly integrable martingale.
	Let us now introduce 
	\[
	\tau_n\coloneqq\inf \{t\geq 0: M_t\leq 1/n\}, 
	\text{ and }
	\tau \coloneqq\inf \{t\geq 0: M_t=0\}.
	\]
	Note that, $\tau_n\leq \tau_{n+1}\leq \tau, \P\as$ and $\tau_n \longrightarrow \tau,\, \P\as,$ as $ n\longrightarrow \infty$.
	Let us put $M^n_t\coloneqq M_{t\wedge \tau_n}$ and note that, since $M^n>0$, it follows from It{\^o}'s formula that
	\[
	M^n_t=\exp\Big( L_t^n-\frac12\langle L^n\rangle_t\Big), \; L_t^n\coloneqq \log(M_0^n)+ \int_0^t \frac{1}{M_s^n}\d M_s^n.
	\]
	Passing to the limit we find that, for $L_t\coloneqq \log(M_0) + \int_0^t \frac{1}{M_s}\d M_s$,
	\[
	M_t=\exp\Big( L_t-\frac12\langle L\rangle_t\Big), \text{ on } \{t<\tau\}.
	\]
	Noticing that $\{T<\tau\}=\big\{\frac{\d\Q }{\d\P}>0\big\}$ we deduce that
	\[
	\frac{\d\Q }{\d\P} =\1{\frac{\d\Q }{\d\P}>0}\exp\Big( L_T-\frac12\langle L\rangle_T\Big).
	\]
	Now recall that $\P$ is, by definition, the unique weak solution of \eqref{eq.dyn.P} and thus, by \cite[Theorem III.4.29]{jacod2003limit} the predictable martingale representation property holds for $(\F,\P)$-local martingales.
	That is, there exists a unique $\alpha\in \H^{2}_{\rm loc}$, such that 
	\(
	L_t=L_0+\int_0^t \alpha_s \cdot  \d W_s.
	\)
	Thus,
	\begin{align}\label{eq.density.rep}
	 \frac{\d\Q }{\d\P} =\1{\frac{\d\Q }{\d\P}>0}  Z_T, \text{ where, }\d Z_t=Z_t\alpha_t\cdot  \d W_t,\,  Z_0=\E\Big[\frac{\d\Q }{\d\P}\Big|\Fc_0\Big] , \P\as
	\end{align}
	Furthermore, observe that for any bounded continuous function $f:\R^\xdim\longrightarrow \R$ we have
	\[
	\E^\P\Big[ f(X_0) \frac{\d\muin }{\d\nu_0}(X_0) \Big]=\E^{\nu_0}\Big[f \frac{\d\muin }{\d\nu_0}\Big]=\E^{\muin}[f]=\E^{\Q\circ X_0^{-1}}[f]=\E^\Q[f(X_0)]=\E^\P\Big[ f(X_0) \E^\P\Big[ \frac{\d\Q }{\d\P}\Big|\Fc_0 \Big]  \Big].
	\]
	That is,
	\[
	M_0=\E^\P\Big[ \frac{\d\Q }{\d\P}\Big|\Fc_0 \Big] =  \frac{\d\muin }{\d\nu_0}(X_0), \P\as 
	\]
	With this, we conclude that $\Q = \P^\alpha \in \Ac(\muin)$.\medskip
	
	Let us now note that by Itô's formula, on $\big\{\frac{\d\Q }{\d\P}>0\big\}$ we have that
	\[
	\ell(M_T)=\ell(M_0)+ \frac12   \int_0^T \ell^{\prime\prime}(Z_t)Z_t^2 \|\alpha_t\|^2 \d t+ \int
	_0^T\ell^\prime(Z_t) Z_t \alpha_t \cdot  \d W_t, \,\P\as,
	\]
	and by \cite[Proposition VIII.1.2]{revuz1999continuous}, $M$ is strictly positive $\Q\as$
	Thus, a localization argument leads to
	%(XXX please more details here on Ito, not on localization. Is there a $1/2$ missing next to $\ell''$?)
\[
J(\P^\alpha)=\Ic_\ell(\muin|\nu_0)+ \frac12 \E^{\P^\alpha}\bigg[ \int_0^T \ell^{\prime\prime}(Z_t)Z_t \|\alpha_t\|^2 \d t\bigg]=\E^\Q\Big[\Big(\frac{\d\Q }{\d\P}\Big)^{-1} \ell\Big(\frac{\d\Q }{\d\P}\Big)\Big]=\E\Big[\ell\Big(\frac{\d\Q }{\d\P}\Big)\Big]=\Ic_\ell(\Q|\P)<\infty.
\]
The converse direction of the correspondence follows exchanging the roles of $\Ic_\ell(\Q|\P)$ and $J(\P^\alpha)$ in the above line.
\medskip

{\rm $(ii)$} The proof of the second part is now immediate since by {\rm $(i)$} we have that $\{\P^\alpha\in \Ac(\muin,\mufin): J(\P^\alpha)<\infty\} = \{\Q\in \Pc(\muin,\mufin): \Ic_\ell(\Q|\P)<\infty\}$.
\end{proof}
\end{lemma}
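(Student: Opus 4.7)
The plan is to establish (i) by setting up a bijection between measures $\Q \in \Pc(\muin)$ with $\Ic_\ell(\Q|\P) < \infty$ and admissible stochastic exponentials $\P^\alpha \in \Ac(\muin)$ with $J(\P^\alpha) < \infty$, and to show that the divergence values coincide. Part (ii) will then be immediate, since the correspondence preserves both the terminal constraint $\Q \circ X_T^{-1} \overset{c}{=} \mufin$ and the transport cost $\E^{\Q}[C(X)]$.

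For the forward direction of (i), I would fix $\Q \in \Pc(\muin)$ with $\Ic_\ell(\Q|\P) < \infty$, so in particular $\Q \ll \P$, and introduce the $(\F,\P)$-martingale $M_t := \E^\P[\d\Q/\d\P \mid \F_t]$, which by superlinearity of $\ell$ is uniformly integrable. Define $\tau_n := \inf\{t : M_t \leq 1/n\}$ and $\tau := \inf\{t : M_t = 0\}$; on $\{t < \tau\}$ the martingale $M$ is strictly positive, so the stochastic logarithm $L_t := \log M_0 + \int_0^t \d M_s/M_s$ is a well-defined $(\F,\P)$-local martingale and $M_t = \exp(L_t - \tfrac12 \langle L\rangle_t)$. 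Since $\P$ is the unique weak solution of \eqref{eq.dyn.P}, the predictable martingale representation property holds for $(\F,\P)$-local martingales (Jacod--Shiryaev, Theorem III.4.29), yielding $\alpha \in \H^{2}_{\rm loc}$ with $L_t = L_0 + \int_0^t \alpha_s \cdot \d W_s$. Testing $M_0$ against bounded continuous functions of $X_0$ and using $\Q \circ X_0^{-1} = \muin$ identifies $M_0 = (\d\muin/\d\nu_0)(X_0)$, $\P$-almost surely, so $\Q = \P^\alpha \in \Ac(\muin)$ in the prescribed form.

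To obtain the identity $\Ic_\ell(\Q|\P) = J(\P^\alpha)$, I would apply Itô's formula to $\ell(M_{t \wedge \tau_n})$: on $\{t < \tau_n\}$ the process $M$ is bounded away from zero so $\ell''(M_s)$ is bounded along the trajectory, producing a local martingale part $\int_0^{t\wedge \tau_n} \ell'(M_s) M_s \alpha_s \cdot \d W_s$ plus a finite-variation term $\frac12 \int_0^{t\wedge\tau_n} \ell''(M_s) M_s^2 \|\alpha_s\|^2 \d s$. Taking $\P$-expectations (after an additional localization of the stochastic integral if needed), recognizing $\E^\P[\ell(M_0)] = \Ic_\ell(\muin|\nu_0)$, and passing to the limit $n \to \infty$, $t \to T$ by monotone convergence (legitimate since $\ell \geq 0$ with minimum at $1$) then yields the divergence identity once the finite-variation term is rewritten as a $\P^\alpha$-expectation via the martingale property of $M$ and Fubini. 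The reverse implication is obtained by running the same Itô computation starting from $\P^\alpha \in \Ac(\muin)$ with $J(\P^\alpha) < \infty$ and deriving a posteriori that $\Ic_\ell(\P^\alpha|\P)$ is finite and equals $J(\P^\alpha)$.

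Part (ii) then follows at once: the bijection from (i) restricts to one between $\{\P^\alpha \in \Ac_c(\muin,\mufin) : J(\P^\alpha) < \infty\}$ and $\{\Q \in \Pc_c(\muin,\mufin) : \Ic_\ell(\Q|\P) < \infty\}$, matching the respective objectives, so the infima coincide. The main technical hurdle is the simultaneous blow-up of $\ell''$ near zero (as in the entropic case $\ell''(x) = 1/x$) and the possible absorption of $M$ at zero; the stopping time $\tau_n$ handles both issues on each stopped interval, but passing to the limit requires careful use of monotone/dominated convergence together with the martingale property of $M$ to exchange the finite-variation integral with the expectation and to re-express it under $\P^\alpha$.
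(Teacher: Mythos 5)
Your proposal is correct and follows essentially the same path as the paper: you introduce the martingale $M_t = \E^{\P}[\d\Q/\d\P\,|\,\Fc_t]$, stop at the level sets $\tau_n$ to make the stochastic logarithm well-defined, invoke predictable martingale representation to produce $\alpha$, identify $M_0$ via the marginal constraint, and then apply Itô's formula to $\ell(M_{\cdot\wedge\tau_n})$ with a localization/limit passage to obtain the divergence identity. The only cosmetic differences are that you phrase uniform integrability via ``superlinearity'' rather than the conditional-Jensen bound $\sup_t\E[\ell(M_t)]\leq\E[\ell(Z_T)]$ the paper uses, and you make explicit the monotone-convergence justification that the paper compresses into ``a localization argument leads to\ldots''.
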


As a direct corollary, and in the case where the reference and admissible measures' initial condition coincide, we obtain the following result.
	It can be seen as an instance of the chain rule or tensorization of sorts for the divergence operator. 
	It is well-known in the case of the relative entropy, see, e.g., \citet*{lacker2018law} or \citet*[Theorem 2]{leonard2014some}.

\begin{corollary}\label{cor.chain}
	Let {\rm \Cref{assump.data}} hold and suppose $\muin=\nu_0$. 
	Then, for every $\Q \in \Pc(\muin)$ we have
	\begin{equation*}
		\Ic_\ell(\Q|\P) = \int_{\R^m}\Ic_\ell(\Q_x|\P_x)\muin(dx) .
	\end{equation*}
\end{corollary}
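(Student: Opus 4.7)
The plan is to deduce this identity directly from Lemma \ref{lemma.ref}(i) together with a straightforward disintegration argument along $\Fc_0$.

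First I would dispose of the case $\Q \not\ll \P$: picking a set $A\in\Fc_T$ with $\P(A)=0$ and $\Q(A)>0$, the disintegration $\P(A) = \int \P_x(A)\,\nu_0(\d x)$ gives $\P_x(A) = 0$ for $\nu_0$-a.e.~$x$, while $\Q(A) = \int \Q_x(A)\,\muin(\d x) > 0$ forces $\Q_x(A)>0$ on a set of positive $\muin$-mass; since $\muin=\nu_0$, this yields $\Q_x\not\ll\P_x$ on that set, so both sides equal $+\infty$.

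For $\Q\ll\P$ with $\Ic_\ell(\Q|\P)<\infty$, Lemma \ref{lemma.ref}(i) yields the representation $\tfrac{\d\Q}{\d\P} = (\d\muin/\d\nu_0)(X_0)\,Z_T = Z_T$, where $Z$ is a stochastic exponential with $Z_0=1$. Taking conditional expectation on $\Fc_0$ shows $\E^\P[Z_T\mid X_0=x] = (\d\muin/\d\nu_0)(x) = 1$ for $\nu_0$-a.e.~$x$, so each $\tilde\Q_x(\d\omega) := Z_T(\omega)\,\P_x(\d\omega)$ is a probability measure. A straightforward application of the defining property of regular conditional probability to the measure $\muin(\d x)\otimes \tilde\Q_x(\d\omega)$ against $\Q$, evaluated on products $B\times A$ with $B\in\Bc(\R^m)$, $A\in \Fc_T$, then identifies $\tilde\Q_x = \Q_x$ for $\muin$-a.e.~$x$. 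Hence $\tfrac{\d\Q_x}{\d\P_x} = Z_T$ for $\muin$-a.e.~$x$, and Fubini's theorem gives
\[
\int_{\R^m}\Ic_\ell(\Q_x|\P_x)\,\muin(\d x) = \int_{\R^m}\E^{\P_x}[\ell(Z_T)]\,\nu_0(\d x) = \E^\P[\ell(Z_T)] = \Ic_\ell(\Q|\P).
\]

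The one point that deserves care is the identification of $\Q_x$ with $\tilde\Q_x$ for a common $\muin$-null exceptional set valid across all $A\in\Fc_T$; this is standard but requires using a countable generating $\pi$-system of $\Fc_T$ (available thanks to the Polish structure of $\Omega$) to conclude that both regular conditional measures coincide $\muin$-a.s., not merely setwise.
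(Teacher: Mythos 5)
Your proof is correct but takes a genuinely different route from the paper's. The paper's argument is entirely Itô-calculus based: it invokes Lemma~\ref{lemma.ref}$(i)$ to rewrite $\Ic_\ell(\Q|\P)$ as $\Ic_\ell(\muin|\nu_0) + \tfrac12\E^{\P^\alpha}\!\left[\int_0^T\ell''(Z_t)Z_t\|\alpha_t\|^2\,\d t\right]$, uses $\muin=\nu_0$ to kill the first term, disintegrates the expectation over $\Fc_0$, and then applies Itô's formula a second time to each conditional law $\P^\alpha_x$, $\P_x$ to recognize $\tfrac12\E^{\P^\alpha_x}[\int_0^T\ell''(Z_t)Z_t\|\alpha_t\|^2\,\d t]=\Ic_\ell(\P^\alpha_x|\P_x)$. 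You instead bypass the stochastic-control representation almost entirely: the only input you really use is that $\muin=\nu_0$ forces $\E^\P[\tfrac{\d\Q}{\d\P}\mid\Fc_0]=1$, which lets you identify $\tfrac{\d\Q_x}{\d\P_x}$ with the global density restricted to the fiber (with the $\pi$-system care you flag), after which Tonelli's theorem (legitimate since $\ell\ge 0$) closes the argument. Your version is more elementary and also handles the degenerate cases $\Q\not\ll\P$ and $\Ic_\ell(\Q|\P)=\infty$ explicitly, which the paper's proof glosses over by invoking the representation of Lemma~\ref{lemma.ref}. The tradeoff is that the paper's route stays within the control-theoretic language that is reused immediately afterwards (e.g., in Corollary~\ref{lem:Phi=oce} and Lemma~\ref{lem:phi.optimal=primal.optimal}), so the Itô computation is not wasted; your route is self-contained and in fact shows you never really needed the $Z_T$/stochastic-exponential detour---the identity $\E^\P[\tfrac{\d\Q}{\d\P}\mid\Fc_0]=1$ follows directly from $\Q\circ X_0^{-1}=\P\circ X_0^{-1}$ without citing Lemma~\ref{lemma.ref}.
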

\begin{proof}
	We know that for every $\Q\in \Pc(\muin)$, it holds $\frac{\d\Q }{\d\P} =\1{\frac{\d\Q}{\d\P\hfill}>0}  Z_T,\, \d Z_t=Z_t \alpha_t\cdot \d W_t,\, \P\as$ and
	\begin{align*}
		\Ic_\ell(\Q|\P) &= \Ic_\ell(\muin|\nu_0)+ \frac12 \E^{\P^\alpha}\bigg[ \int_0^T \ell^{\prime\prime}(Z_t)Z_t \|\alpha_t\|^2 \d t\bigg] = \frac12  \int_{\R^m} \E^{\P^\alpha_x}\bigg[ \int_0^T \ell^{\prime\prime}(Z_t)Z_t \|\alpha_t\|^2 \d t\bigg]\muin(dx).
	\end{align*}
	Applying It\^o's formula and using $\ell(1) = 0$, we have $\frac12 \E^{\P^\alpha_x}\bigg[ \displaystyle \int_0^T \ell^{\prime\prime}(Z_t)Z_t \|\alpha_t\|^2 \d t\bigg] = \Ic_\ell(\P^\alpha_x|\P_x)$. 
	%({\color{orange}I think my mistake in the email I sent was that the latter equality here is not correct right? it would be true if $\frac{d\Q_x}{d\P_x} = Z_T$, but it is rather $\1{\frac{\d\Q\!\!}{\d\P\hfill}>0} \frac{\d\muin }{\d\nu_0} (x) Z_T$.})
\end{proof}

As corollary of the above result, we show that the functional $\Psi^\varphi$ introduced in \eqref{eq:defPsi} is actually the value function of an optimal stochastic control problem.
\begin{corollary}
\label{lem:Phi=oce}
	Let {\rm \Cref{assump.data}} hold.
	For every $\varphi\in C_{b,p}(\R^\xdim)$, the functional $\Psi^\varphi$ admits the representation
	\begin{align*}
		\Psi^\varphi(\mu) &= \inf_{\P^\alpha\in \Ac(\mu )} \E^{\P^\alpha} \bigg[ \frac12  \int_0^T  \ell^{\prime\prime}(Z_t)Z_t \|\alpha_t\|^2 \d t  +  Q_c \varphi(X_T) + C(X) \bigg]  +  \Ic_\ell(\mu|\nu_0)\\ 
		&=  \inf_{\P^\alpha\in \Ac(\mu)}\Big(\Ic_\ell\big(\P^{\alpha}|\P\big)   + \E^{\P^\alpha}[Q_c \varphi(X_T) +C(X)]\Big).
		%= \inf_{\Q\ll\P}\Big(  \Ic_\ell(\Q|\P)+  \Ic_\ell(\mu|\nu_0)  + \E^{\Q}[Q_c \varphi(X_T)]\Big)
	\end{align*}
%	where $X^x$ is the unique strong solution of the SDE $dX_t = b(t,X_t)dt + \sigma dW_t$, $X_0  = x$ and $\Psi^\varphi(\mu) = -\langle \mathcal{OCE}\big(Q_c\varphi(X_T^\cdot)\big),\mu\rangle$ {\color{blue}= $\mathcal{OCE}\big(Q_c\varphi(X_T)\big)$ with $X_0\sim \mu$}.
\end{corollary}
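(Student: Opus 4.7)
The first equality is immediate from the second: it suffices to substitute the explicit expression for $\Ic_\ell(\P^\alpha|\P)$ given in \Cref{lemma.ref}\emph{(i)}, namely $\Ic_\ell(\P^\alpha|\P) = \Ic_\ell(\mu|\nu_0) + \tfrac12\E^{\P^\alpha}[\int_0^T \ell''(Z_t)Z_t\|\alpha_t\|^2\d t]$. The bulk of the work therefore lies in proving the second equality, which I would organize in three steps.

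\textbf{Step 1: pointwise OCE duality.} Under \Cref{assump.data}\ref{assump.data.1}, the optimized certainty equivalent admits the classical variational representation
\[
\Phi_P(\xi) \;=\; \sup_{\Q\ll P,\, \Q(\Omega)=1}\bigl(\E^{\Q}[\xi] - \Ic_\ell(\Q|P)\bigr),
\]
cf.\ \Cref{prop:dual.f.div} and \cite{bental2007old}. Applying this with $P=\P_x$ and $\xi = -(Q_c\varphi(X_T)+C(X))$ gives, for every $x \in \R^\xdim$,
\[
-\Phi_{\P_x}\bigl(-Q_c\varphi(X_T)-C(X)\bigr) \;=\; \inf_{\Q_x\ll \P_x}\bigl(\E^{\Q_x}[Q_c\varphi(X_T)+C(X)] + \Ic_\ell(\Q_x|\P_x)\bigr).
\]

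\textbf{Step 2: chain rule.} I would extend \Cref{cor.chain} to general $\mu \ll \nu_0$ by showing
\[
\Ic_\ell(\Q|\P) \;=\; \Ic_\ell(\mu|\nu_0) + \int_{\R^\xdim} \Ic_\ell(\Q_x|\P_x)\mu(\d x), \qquad \Q=\P^\alpha \in \Ac(\mu).
\]
Indeed, for $\Q \in \Ac(\mu)$ one verifies directly by testing against bounded Borel functions that the disintegration satisfies $\Q_x(\d\omega) = Z_T(\omega)\P_x(\d\omega)$. Applying It\^o's formula to $\ell(Z_\cdot)$ under $\P_x$, localizing and using $\ell(1)=0$, then changing measure from $\P_x$ to $\Q_x$ by the martingale property of $Z$ as in the proof of \Cref{lemma.ref}, yields $\Ic_\ell(\Q_x|\P_x) = \tfrac12\E^{\Q_x}[\int_0^T \ell''(Z_t)Z_t\|\alpha_t\|^2 \d t]$. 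Integrating against $\mu$ and invoking tower/Fubini matches precisely the path-integral component of $J(\P^\alpha)$, giving the claim.

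\textbf{Step 3: interchange of $\inf$ and integral.} Integrating the identity of Step 1 against $\mu$ and adding $\Ic_\ell(\mu|\nu_0)$ gives
\[
\Psi^\varphi(\mu) \;=\; \Ic_\ell(\mu|\nu_0) + \int_{\R^\xdim} \inf_{\Q_x\ll \P_x}\bigl(\E^{\Q_x}[Q_c\varphi(X_T)+C(X)] + \Ic_\ell(\Q_x|\P_x)\bigr)\mu(\d x).
\]
For the $``{\le}"$ direction of the desired equality, I would disintegrate any $\Q \in \Pc(\mu)$ with $\Ic_\ell(\Q|\P)<\infty$ as $(\Q_x)_x$, note that $\Q_x \ll \P_x$ for $\mu$-a.e.\ $x$, and bound the integrand pointwise by the OCE value. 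For the reverse direction, a measurable selection (Aumann–von Neumann, applied to the measurable-graph multifunction of $\varepsilon$-minimizers in the Polish space $\Pc(\Omega)$) produces for each $\varepsilon>0$ a kernel $x\mapsto \Q_x^\varepsilon$; the glued measure $\Q^\varepsilon(A) := \int \Q_x^\varepsilon(A)\mu(\d x)$ lies in $\Pc(\mu)$, is $\P$-absolutely continuous, and by Step 2 satisfies $\Ic_\ell(\Q^\varepsilon|\P) - \Ic_\ell(\mu|\nu_0) = \int \Ic_\ell(\Q_x^\varepsilon|\P_x)\mu(\d x)$, yielding the matching upper bound as $\varepsilon\downarrow 0$. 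Finally, the bijection between $\{\Q \in \Pc(\mu): \Ic_\ell(\Q|\P)<\infty\}$ and $\{\P^\alpha \in \Ac(\mu): J(\P^\alpha)<\infty\}$ provided by \Cref{lemma.ref}\emph{(i)} converts the infimum over $\Pc(\mu)$ into one over $\Ac(\mu)$.

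The principal obstacle is Step 3: while the pointwise duality and the chain rule are essentially computational given the stochastic-exponential structure, the measurable selection requires one to check that the set-valued map $x \rightrightarrows \{\Q_x \ll \P_x : \E^{\Q_x}[\cdot] + \Ic_\ell(\Q_x|\P_x) \le -\Phi_{\P_x}(\cdot) + \varepsilon\}$ has a measurable graph in the weak topology of $\Pc(\Omega)$, which is where the continuity of $Q_c\varphi$ from \Cref{assump.data}\ref{assump.data.2} and the lower semicontinuity properties of $\Ic_\ell(\cdot|\P_x)$ enter decisively.
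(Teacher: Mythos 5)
Your proposal follows essentially the same route as the paper: pointwise OCE duality (your Step 1 is exactly the paper's use of \Cref{prop:dual.f.div}), conversion between path-measure divergence and the stochastic-control functional via \Cref{lemma.ref}, and an interchange of infimum and integral. The one structural difference is in how the interchange is justified. You carry it out by hand in Step 3 via an Aumann--von Neumann measurable selection over $\varepsilon$-minimizers in $\Pc(\Omega)$ plus a gluing argument, and you correctly identify the analytic inputs (lower semicontinuity of $\Ic_\ell(\cdot\,|\P_x)$, continuity of $Q_c\varphi$). The paper instead first recasts the inner infimum over $\{\Q\ll\P_x\}$ as a control problem over $\Ac(\delta_x)$ (via \Cref{lemma.ref} applied to the reference pair $(\P_x,\delta_x)$, using $\Ic_\ell(\delta_x|\delta_x)=0$) and then invokes \Cref{lemma.v.disintegration}, which encapsulates the measurable selection via standard results for weak-formulation stochastic control. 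Both routes require the same selection argument; the paper simply packages it in the appendix lemma. Your Step 2 ``chain rule'' is a correct unwinding of \Cref{lemma.ref}\emph{(i)} combined with a per-slice It\^o argument, and plays the same role as the paper's two invocations of \Cref{lemma.ref}.

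Two points worth noting. First, you omit the degenerate case treated at the start of the paper's proof: when $\mu\big(x:\E^{\P_x}[\ell^\ast((Q_c\varphi(X_T)+C(X))^+)]=\infty\big)>0$, the OCE duality of \Cref{prop:dual.f.div} is not directly applicable, and the paper shows separately (citing \cite[Theorem 2.3]{cherny2007divergence}) that all three expressions equal $-\infty$ in that case; your Step 1 should be stated only on the set where the integrability holds. Second, be careful with the claim that the chain rule ``extends to general $\mu\ll\nu_0$'': as written it asserts exact additivity of $\Ic_\ell$ across $\Fc_0$ for all $\Q\in\Ac(\mu)$, which the paper deliberately avoids claiming in general (this is precisely why \Cref{assump.data.2} introduces super/subadditivity). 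Your derivation is fine \emph{insofar as it quotes \Cref{lemma.ref}(i)}, but you should present it as a consequence of that lemma rather than as an independent extension of \Cref{cor.chain}, whose proof as given in the paper genuinely uses $\muin=\nu_0$.
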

\begin{proof}
Let $\varphi\in C_{b,p}(\R^\xdim)$.
	%({\color{orange}WHY?})
	Suppose $\mu(x:\E^{\P_x}[\ell^\ast((Q_c\varphi (X_T) +C(X))^+)]=\infty)>0$.
	Notice that since $\ell^\ast$ is increasing, we have that $\E[\ell^\ast( \zeta )]\leq \E[\ell^\ast(\zeta^+)]$ for any random variable $\zeta$.
	It then follows by definition that $\Psi^\varphi(\mu)=-\infty$.
	Moreover, thanks to \cite[Theorem 2.3]{cherny2007divergence}, the other two expressions in the statement are also equal to $-\infty$.
	\medskip
	
	We now consider the case $\mu(x:\E^{\P_x}[\ell^\ast((Q_c\varphi (X_T) +C(X))^+)]<\infty)=1$.
	By \Cref{prop:dual.f.div}$.(ii)$, we have that
	\begin{align}
		\Psi^\varphi(\mu)%=  -\sup_{\Q\ll\P}\Big( \E^{\Q}[-Q_c\varphi (X_T)] - \Ic_\ell(\Q|\P) \Big)  + \Ic_\ell(\mu|\nu_0) 
		&=  -  \int_{\R^\xdim} \sup_{\Q\ll\P_x}\Big( \E^{\Q}[-Q_c\varphi (X_T) - C(X)] - \Ic_\ell(\Q|\P_x)\Big)  \mu(\d x) +  \Ic_\ell(\mu|\nu_0) \label{eq.OCE.rep.1} \\
		&=  \int_{\R^\xdim}   \inf_{\Q\ll\P_x}\Big(  \Ic_\ell(\Q|\P_x)+ \E^{\Q}[Q_c \varphi(X_T) - C(X)]\Big) \mu(\d x) +  \Ic_\ell(\mu|\nu_0)  . \notag
	\end{align}

Note that $\Q\ll\P_x$ implies $\Q\circ X_0^{-1}=\delta_x=\P_x\circ X_0^{-1}$, and recalling $\ell(1)=0$, we have that $\Ic_\ell(\Q\circ X_0^{-1}|\P_x\circ X_0^{-1})=0$.
	It them follows from \Cref{lemma.ref} that
	\begin{align*}
		\Psi^\varphi(\mu)&= \int_{\R^\xdim}  \bigg( \inf_{\P^\alpha\in \Ac(\delta_x)}\E^{\P^\alpha} \bigg[ \frac12  \int_0^T  \ell^{\prime\prime}(Z_t)Z_t \|\alpha_t\|^2 \d t  +  Q_c \varphi(X_T) + C(X)] \bigg]\bigg) 
		\mu (\d x) +  \Ic_\ell(\mu|\nu_0)\\
		&=\inf_{\P^\alpha\in \Ac(\mu )}\E^{\P^\alpha} \bigg[ \frac12  \int_0^T  \ell^{\prime\prime}(Z_t)Z_t \|\alpha_t\|^2 \d t  +  Q_c \varphi(X_T) +C(X)\bigg]  +  \Ic_\ell(\mu|\nu_0)\\
		&=\inf_{\P^\alpha\in \Ac(\mu )}\Big(\Ic_\ell\big(\P^{\alpha}|\P\big)   + \E^{\P^\alpha}[Q_c \varphi(X_T) + C(X)]\Big)
	\end{align*}
where the second equality follows from \Cref{lemma.v.disintegration}.
\end{proof}

\section{Proofs of the main results}
\label{sec:proofs_of_the_main_results}

This section collects the proofs of the main results as stated in \Cref{sub:main_results}.

\subsection{Existence, uniqueness and convex duality}

%\begin{lemma}\label{lemma.aux1}
%Let  $(A^n)_{n\geq 1}$ converge to $A$ in law in $\Cc^\xdim$. 
%	Then, $(A^n)_{n\geq 1}$ converges to $A$ in law in $\L^2([0,T];\R^\xdim)$.
%	Moreover, if $A^n_\cdot=\int_0^\cdot \alpha_t^n\d t$, $n\geq 1$ and  $A_\cdot=\int_0^\cdot \alpha_t\d t$.
%	Then, $\int_0^\cdot \| \alpha_t^n\|^2 \d t$ converges to $\int_0^\cdot \| \alpha_t\|^2 \d t$ in law in $\Cc$.
%\begin{proof}
%Let us argue the first statement.
%	Since $(A^n )_{n\geq 1}$ converges to $  A $ in law in $\Cc^\xdim$ there exists $(\tilde \Omega, \tilde \Fc,\tilde \P)$ such that $( A^n)_{n\geq 1}$ converges to $ \tilde A$, $\tilde\P\as$, in $\Cc^\xdim$. 
%	We claim that $(\tilde A^n)_{n\geq 1}$ converges to $ \tilde A$, $\tilde\P\as$, in $\L^2([0,T])$. 
%	Indeed, $\tilde\P\as$
%\[
% \|  \tilde A^n -  \tilde  A  \|_{\L^2([0,T];\R^\xdim)}=\int_0^T \|  \tilde A^n_s -  \tilde A_s \|^2 \d s \leq T  \| \tilde A^n -  \tilde A \|_{\infty}^2\longrightarrow 0.
%\]
%The result follows since for $f:\L^2([0,T];\R^\xdim)\longrightarrow \R$ bounded and continuous, by dominated convergence and the claim we have that 
%\[
%\E[f(A^n)]=\tilde \E[f(\tilde A^n)]\longrightarrow \tilde \E[f(\tilde A)]=\E[f(A)], \; n\longrightarrow \infty
%\]
%To deduce the second statement it suffices to note that on a probability space $(\tilde \Omega, \tilde \Fc,\tilde \P)$ we have that, $\tilde\P\as$
%\[
%\Big\| \int_0^s \| \tilde \alpha_t^n\|^2 \d t - \int_0^s \| \tilde \alpha_t\|^2 \d t \Big \|_{\infty}\leq \int_0^T \|  \tilde \alpha^n_s -  \tilde \alpha_s \|^2 \d s \longrightarrow 0.\qedhere
%\]
%\end{proof}
%\end{lemma}

\begin{proof}[Proof of {\rm Theorem \ref{thm:exists.duality}}]
\begin{enumerate}[label=$(\roman*)$, ref=.$(\roman*)$,wide,  labelindent=0pt]
\item Since $\Pc_c(\muin,\mufin) \neq \emptyset$, for every $n\ge1$ there is $\Q^n\in \Pc_c(\muin,\mufin)$ such that
	\begin{equation*}
			V_c(\muin,\mufin) \ge \E^{\Q^n}[C(X)] + \Ic_\ell\big(\Q^{n} |\P\big) - \frac1n.
	\end{equation*}	
	As argued in the proof of \Cref{prop.v.lsc} above, there is $\Q^\star \in \Pc_c(\muin,\mufin)$ such that $\E^{\Q^\star}[C(X)] +\Ic_\ell(\Q^\star|\P) \le \liminf_{n\to\infty}(\E^{\Q^n}[C(X)] + \Ic_\ell(\Q^n |\P) )$,
	%Thus, we have $V_c(\mu,\nu) \ge \Jc(\P)$.
	%This shows that $\P$ is optimal.
	%and, by \Cref{rmk.equiv}, there is a weak control rule $\gamma\in \Gamma(\muin,\mufin)$ such that $J(\gamma)=\Jc(\P)$.
	Thus, $\Q^\star$ is optimal for Problem \eqref{eq.SP}.\medskip
	
	Uniqueness follows since by \Cref{lemma.convexcontrols} the set $\Pc_c(\muin,\mufin)$ is convex, and under {\rm \Cref{assump.data}} the map \( \Q  \longmapsto \Ic_\ell(\Q|\P) \) is strictly convex.
	Consequently, $V_c(\muin,\mufin)$ admits a unique minimizer $\Q^\star\in \Pc_c(\muin,\mufin)$.
	%But then, since
	%\[
	%\frac{\d\P^\star\!\!}{\d\P\hfill} =\Ec\bigg(\int_0^\cdot\sigma^{-1}\alpha_t dW_t \bigg)_T
	%\]
	%we have, \cite[Proposition VIII.1.6]{revuz1999continuous}, that there is unique process $\alpha^\star$ in the class of $\F$--predictable process satisfying $\displaystyle\E\bigg[\int_0^T \|\alpha^\star_t\|^2\diff t\bigg]<\infty$.

	\medskip

\item 
	For any $\muin,\mufin\in \Pc_p(\R^m)$ let us put
	\begin{equation*}
		D(\muin,\mufin) := \sup_{\varphi\in C_{b,p}(\R^m)}\Big(\Psi^\varphi(\muin) +  \langle \varphi, \mufin\rangle   \Big)
	\end{equation*}
	with $\Psi^\varphi$ defined in \eqref{eq:defPsi}.
	% First recall from \eqref{eq.dualitywot} the duality formula for $\Wc_c$:
	% \begin{equation*}
	% 	\Wc_c(\P^\alpha\circ X^{-1}_T , \mufin) = \sup_{\varphi \in C_{b,p}(\R^d)}\big ( \langle Q_c\varphi,\P^\alpha\circ X^{-1}_T\rangle -\langle \varphi,\mufin\rangle  \big).
	% \end{equation*}
	% see , and since $g$ is increasing, proper, convex and lower semicontinuous, it follows by Fenchel-Moreau theorem that it satisfies the convex dual representation 
	% \begin{equation*}
	% 	g(x) = \sup_{\lambda \ge 0}(\lambda x - g^*(\lambda)).
	% \end{equation*}
	If $V_c(\muin, \mufin) = + \infty$ for all $\mufin \in \Pc_2(\R^m)$,
	then by \Cref{lem:Phi=oce},
	 \begin{align*}
		D(\muin,\mufin) & \ge  
		%\inf_{\P^\alpha \in \Ac(\muin)}\E^{\P^\alpha}\bigg[\int_0^T\ell''(Z_t)Z_t|\alpha_t|^2\diff t + Q_c  \varphi (X_T) \bigg]   - \langle \varphi, \mufin\rangle +\Ic_\ell(\muin|\nu_0)\\
		%& =
		 \inf_{\P^\alpha \in \Ac(\muin)} \Ic_\ell\big(\P^{\alpha}|\P\big) + \langle  Q_c \varphi, \P^\alpha\circ X^{-1}_T\rangle +\E^{\P^\alpha}[C(X)]   - \langle \varphi, \mufin\rangle
	\end{align*}
	for all $\varphi\in C_{b,p}(\R^d)$.
	This shows, due to the convex dual representation of $\Wc_c(\P\circ X_T^{-1},\mufin)$ that $D(\muin, \mufin)\ge \inf_{\P^\alpha \in \Ac(\muin)} \Ic_\ell\big(\P^{\alpha}|\P\big) + \E^{\P^\alpha}[C(X)] + \Wc_c(\P^\alpha\circ X_T^{-1},\mufin)\ge  \infty$. 
	In particular, $V_c(\muin, \mufin) = D(\muin, \mufin)$.\medskip
	
	Let us now assume that there is $\mufin \in \Pc_2(\R^m)$ such that $V_c(\muin, \mufin) < + \infty$.
	Denote by $\Mc_p(\R^m)$ the space of Borel signed measures on $\R^d$ with finite second moment, and equipped with the weak topology.
	We extend the function $V_c(\muin,\cdot)$ to be $+\infty$ outside $\Pc_p(\R^m)$, and still denote by $V_c$ this extension.
	It is standard that the function $V_c(\muin,\cdot)$ remains convex and lower-semicontinuous on $\Mc_p(\R^m)$, recall \Cref{lemma.convexcontrols} and \Cref{prop.v.lsc}.
	Thus, by Fenchel-Moreau theorem, see \cite[Theorem 2.3.3]{zalinescu2002convex} we have
	\begin{align*}
		V_c(\muin,\mufin) = \sup_{\varphi \in C_{b,p}(\R^\xdim)}(\langle \varphi, \mufin \rangle - V^\ast_c(\varphi))= \sup_{\varphi \in C_{b,p}(\R^\xdim)}( - V^\ast_c(-\varphi)-\langle \varphi, \mufin \rangle),
	\end{align*}
	where the second equality follows by a monotone convergence argument, and $V^\ast_c$ is the convex conjugate of $V_c$ given by
	\begin{equation*}
		V^\ast_c(\varphi) \coloneqq \sup_{\mu \in \Mc_p(\R^\xdim)}(\langle \varphi, \mu \rangle - V_c(\muin,\mu)),\quad \varphi \in C_{b,p}(\R^\xdim).
	\end{equation*}

	The rest of the proof consists of showing that $-V_c^*(-\varphi) = \Psi^\varphi(\muin)$.
	To see that $-V_c^*(-\varphi) \geq \Psi^\varphi(\muin)$, note that
	\begin{align*}
		-V^*_c(-\varphi) &= \inf_{\mu \in \Pc_p(\R^\xdim)}(  V_c(\muin,\mu)+\langle \varphi, \mu\rangle )\\
			&=  \inf_{\mu \in \Pc_p(\R^\xdim)} \inf_{\P^\alpha\in \Ac_c(\muin,\mu)}\bigg (\frac12\E^{\P^\alpha}\bigg[ \int_0^T \ell^{\prime\prime}(Z_t)Z_t \|\alpha_t\|^2 \d t+ C(X) \bigg]   +\langle \varphi, \mu\rangle \bigg)+\Ic_\ell(\muin|\nu_0) \\
			&\geq  \inf_{\mu \in \Pc_p(\R^\xdim)} \inf_{\P^\alpha\in \Ac_c(\muin,\mu)}
				\bigg (\frac12\E^{\P^\alpha}\bigg[ \int_0^T \ell^{\prime\prime}(Z_t)Z_t \|\alpha_t\|^2 \d t +Q_c\varphi(X_T)+ C(X) \bigg]+\Ic_\ell(\muin|\nu_0)\\
			&= \inf_{\P^\alpha\in \Ac(\muin)}\frac12 \E^{\P^\alpha} \bigg[ \int_0^T \ell^{\prime\prime}(Z_t)Z_t \|\alpha_t\|^2 \d t +Q_c\varphi(X_T) + C(X)\bigg]+\Ic_\ell(\muin|\nu_0)\\
			&=\Psi^\varphi(\muin) 
	\end{align*}
where the second equality follows from \eqref{lemma.ref}, the inequality follows from \eqref{eq.dualitywot} since $\P^\alpha\in \Ac_c(\muin,\mufin)$, the third equality follows since $\Ac(\muin)=\bigcup_{\mu\in \Pc_p(\R^\xdim)} \Ac_c(\muin,\mu)$, and the last one is due to \Cref{lem:Phi=oce}.
The reversed inequality is argued as in \cite{hernandez2025schrodinger} following ideas from \cite[Lemma 2.1]{backhoff2019existence}.\qedhere
	 %{\color{red}This part needs a bit more work.} {\color{green} cite other manuscript}.
\end{enumerate}
\end{proof}

\subsection{Existence of optimal potentials}\label{sec.dual.optimizers}
In this section, we prove the existence of dual optimizers for Schrödinger’s nonentropic problem. 
	The proof relies on compactness arguments and a careful study of the dual functional. 
	This result provides a rigorous foundation for the dual formulation and will be used in subsequent results.
\begin{proof}[Proof of {\rm \Cref{thm.existence.dual}}]
	For each $n \in \N$, let $\Q^n\in \Pc_c(\muin, \mufin)$ and $\varphi^n\in C_{b,p}(\R^\xdim)$ be such that
	\begin{equation}
	\label{eq:1.noptim.potential}
		\E^{\Q^n}[C(X_0,X_T)] + \Ic_\ell(\Q^n|\P) - \frac1n \le V_c(\muin,\mufin) \le  \Psi^{\varphi^n}( \muin) - \langle \varphi^n, \mufin\rangle + \frac1n.
	\end{equation}
	Re-arranging terms and using definition of $\Psi^{\varphi^n}$, see \eqref{eq:defPsi}, this amounts to
	\begin{align}
	\notag
		& \int _{\R^\xdim} \Phi_{\P_x}\Big(-\varphi^n(X_T) -C(X_0,X_T) + \frac1n\Big)  \muin(\d x)\\ \notag
		&\le \langle -\varphi^n, \mufin\rangle - \E^{\Q^n}[C(X_0,X_T) ] + \frac1n -\Ic_\ell(\Q^n|\P) +\Ic_\ell(\muin|\nu_0)  \\ \notag
		&\leq \E^{\Q^n}\Big[-\varphi^n(X_T) - C(X_0,X_T) + \frac1n\Big] -  \int _{\R^\xdim} \Ic_\ell(\Q^n_x|\P_x) \muin(\d x)  + \E^{\Q^n}[-\varphi^n(X_T)] - \langle \varphi^n, \mufin\rangle \\ 
		\label{eq:approx.optim.potential}
		&\le  \int _{\R^\xdim} \bigg( \E^{\Q^n_x }\Big[-\varphi^n(X_T) - C(X_0,X_T) + \frac1n\Big] -  \Ic_\ell(\Q^n_x|\P_x)\bigg)  \muin(\d x) 
	\end{align}
	where we used the fact that $\Phi_{\P_x}(\xi + r) = \Phi_{\P_x}(\xi) -r $ for all $r\in \R$, the second inequality follows from \Cref{assump.data.2}, and the last inequality follows by the fact that $0 =\Wc_c(\Q^n\circ X_T^{-1}, \mufin) \ge \E^{\Q^n}[-\varphi^n(X_T)] - \langle \varphi^n, \mufin\rangle $.
%	As in the proof of xxx and the Varadhan variational principle, this shows that ({\color{blue}Am I omitting a factor below?})
	% \begin{equation*}
	% 	\frac{d\Q^n}{d\P} = \partial_x\ell^*\Big( - \varphi^n(X_T) - C(X) + \frac1n  - \psi^n(X_0)\Big )
	% \end{equation*}
	% for a Borel measurable function $\psi^n$ on $\R^\xdim$ with values in $\R$ and such that $\E[\partial_x\ell^*( - \varphi^n(X_T) - C(X) + 1/n  - \psi^n(X_0) )] = 1$.
	% Moreover, we know by Lemma XX that $\Q^n = \P^{\alpha^n}$ for an $\F$-progressively measurable process $\alpha^n$; and 
	Next, using the convexity and growth property of $\ell$ as in \Cref{prop.v.lsc}, it follows that the sequence $(\frac{\d \Q^n_x }{\d\P_x})_{n\ge1}$ is uniformly integrable with respect to $\P_x$.
	Thus, by the de la Vall\'ee Poussin theorem, up to a subsequence, $(\frac{\d\Q^n_x}{\d\P_x})_{n\ge1}$ converge weakly in $\L^1(\P_x)$ to a positive random variable $Z_x$ with $\E^{\P_x}[Z_x] =1$.
	Thus, we can define a probability measure $\Q_x$ by $\Q_x(A)\coloneqq \E^{\P_x}[Z_x\cdot  \mathds{1}_A  ]$ and set $\Q\coloneqq \muin\otimes\Q_\cdot$.
	In addition, by Mazur's theorem, there is a subsequence in the asymptotic convex hull of $(\frac{\d\Q^n_x}{\d\P_x})_{n\ge1}$ which converges to $Z_x$ in $\L^1(\P_x)$ and therefore $\P_x\as$, up to a further subsequence.
	That is there are convex weights $\lambda^{i,j}\in [0,1], i,j\geq1$, such that $\sum_{i\ge j}\lambda^{i,j}=1$ and
	\[
		\frac{\d \widetilde{\Q}^n_x}{\d\P_x} := \sum_{j\ge n}\lambda^{n,j}\frac{\d\Q^j_x}{\d \P_x} \longrightarrow Z_x, \; \P_x\as,\; n\longrightarrow \infty.
	\]
	
To ease the notation, we put $\widetilde{\varphi}^n(x) \coloneqq \sum_{j\ge n}\lambda^{n,j}\varphi^j(x)$.
Let us now note that since $\Q^j\circ X^{-1}_T = \mufin$ for all $j$ and thus $\widetilde{\Q}^n\circ X_T^{-1} = \mu_T$, we have that
\begin{align}\label{eq.optim.potential.3}
\notag \sum_{j\ge n}\lambda^{n,j} \bigg(  \int _{\R^\xdim}    \E^{\Q^j_x } [-\varphi^j(X_T) ] \muin(\d x)  \bigg) 
&=
\sum_{j\ge n}\lambda^{n,j}   \E^{\Q^j } [-\varphi^j(X_T) ]  \\
\notag &=
\sum_{j\ge n}\lambda^{n,j}  \int _{\R^\xdim}    -\varphi^j(x)   \mufin(\d x)\\
&=
 \int _{\R^\xdim}  - \widetilde{\varphi}^n (x)    \mufin(\d x)
=
\int _{\R^\xdim} \Big(  \E^{\widetilde{\Q}^n_x }\big[ - \widetilde{\varphi}^n (X_T) \big]\Big) \muin(\d x) .
\end{align}
Consequently, by convexity of $\Phi_{\P_x}(\cdot)$ and $\Ic_\ell(\cdot|\P_x)$, it follows from \eqref{eq:approx.optim.potential} and \eqref{eq.optim.potential.3} that 
	\begin{align}\label{eq.optim.potential.4}
		 &\int _{\R^\xdim} \Phi_{\P_x}  \Big(-\widetilde{\varphi}^n (X_T) - C(X_0,X_T)+ \frac1n \Big)  \muin(\d x)\notag  \\
		&\le \sum_{j\ge n}\lambda^{n,j}   \int _{\R^\xdim} \Phi_{\P_x}  \Big( - \varphi^j(X_T)- C(X_0,X_T) + \frac1n\Big)  \muin(\d x) \notag  \\
		&\le \sum_{j\ge n}\lambda^{n,j}  \int _{\R^\xdim} \bigg(   \E^{\Q^j_x }\Big[-\varphi^j(X_T) - C(X_0,X_T) + \frac1n\Big] - \Ic_\ell(\Q^j_x|\P_x) \bigg ) \muin(\d x)\notag   \\ 
		&\le   \int _{\R^\xdim} \bigg(  \E^{\widetilde{\Q}^n_x }\Big[ - \widetilde{\varphi}^n ( X_T) - C(X_0,X_T) + \frac1n\Big] - \Ic_\ell(\widetilde{\Q}^n_x|\P_x) \bigg ) \muin(\d x). 
\end{align}
%{\color{blue}so here I don't know how to deal with weak constrains}

	We now claim that $\E^{\mu_0\otimes \P_\cdot}[\ell^\ast((-\widetilde{\varphi}^n(X_T) - C(X_0,X_T) + \frac1n )^+)]<+\infty$.
	Indeed, if not, as in the proof of \Cref{lem:Phi=oce}, we deduce that $\Psi^{\tilde \varphi^n}(\mu_0)=-\infty$ which in light of \eqref{eq:1.noptim.potential} leads to a contradiction.
	Thus, as in the proof of \Cref{thm:propertyMain}, we may use the convex dual representation of $\Phi_{\P_x}$, \Cref{prop:dual.f.div}, together with {\color{black} $\P_{0T}\sim\muin\otimes\mufin$}, to deduce that $\widetilde{\Q}^n_x$ attains the supremum in \eqref{eq:dual.rep.gen.oce} for $\Phi_{\P_x}(-\widetilde{\varphi}^n(X_T) - C(X_0,X_T) + \frac1n )$ and thus by \Cref{prop:dual.f.div}, it holds
	\begin{align*}
		\frac{\d\widetilde{\Q}^n}{\d\P} =\dfrac{\d \muin}{\d \nu_0}(X_0) \partial_x\ell^*\Big( -\widetilde{\varphi}^n(X_T) - C(X_0,X_T) + \frac1n - \widetilde\psi^n(X_0) \Big),\,\, \P\as,
	\end{align*}
	for a Borel-measurable function $\widetilde\psi^n$.
	Because $\partial_x\ell^*$ is invertible, indeed $\partial_x\ell^*= (\partial_x\ell)^{-1}$, we have\footnote{Recall $\P_{0T}\sim\muin\otimes\mufin$ implies $\muin\sim\nu_0$.}
	\begin{align}\label{eq.optim.potential.2}
		-\widetilde{\varphi}^n(X_T) - C(X_0,X_T)+ \frac1n - \widetilde\psi^n(X_0) = \partial_x\ell \Big( \dfrac{\d\widetilde{\Q}^n}{\d\P}/\dfrac{\d \muin}{\d \nu_0}(X_0) \Big ).
	\end{align}
%Since $\ell$ is continuosly differentiable, we thus have that $\widetilde{\varphi}^n(X_T) + \widetilde{\psi}^n(X_0) $ converges almost surely to $-(\partial_x\ell^*)^{-1}(Z) -C(X)$.
 %
 %
 Since $\ell$ is continuosly differentiable, there exists a Borel-measurable function $f:\R^\xdim\times\R^\xdim\to \R$ such that $\widetilde{\varphi}^n(x)+\widetilde{\psi}^n(y) \longrightarrow -\partial_x\ell (f(x,y)) -C(x,y), \P_{0T}\ae$
	Moreover, the convergence of $(\frac{\d\tilde\Q^n_x}{\d\P_x})_{n\ge1}$ in $\L^1(\P_x)$, \eqref{eq.optim.potential.2}, and the estimate $\partial_x\ell(x) \le \varepsilon \ell(x)+C, \eps>0$, see \cite[Theorem 6.7]{evans2015measure}, shows that the convergence holds in $\L^1(\P_{0T})$.
	It then follows from \cite[Proposition 2]{ruschendorf1993note} (recall $\P_{0T}\sim\muin\otimes\mufin$) that there are two Borel functions $\varphi^\star$ and $\psi^\star$ such that $ -(\partial_x\ell^*)^{-1}(f(x,y)) -C(x,y) = \varphi^\star(x) + \psi^\star(y)$, $\P_{0T}\ae$
	That is,
\begin{align*}
	\frac{\d\Q }{\d\P} =\dfrac{\d \muin}{\d \nu_0}(X_0) \partial_x\ell^*\Big( - \varphi^\star(X_T) - C(X_0,X_T)   - \psi^\star(X_0) \Big), \P\as,
\end{align*}	
Notice this implies that $\E^{\muin\otimes  \P_\cdot }[\partial_x\ell^\ast (  - \varphi^\star(X_T) - C(X_0,X_T)   - \psi^\star(X_0) )] = 1$ and, by \Cref{prop:dual.f.div}, we have that
\begin{align*}
-\int_{\R^\xdim}  \Phi_{\P_x }\Big( - \varphi^\star(X_T) - C(X_0,X_T) \Big)\muin(\d x) = - \E^{\mu_0\otimes\P_\cdot}\big[ \ell^\ast( -\varphi^\star(X_T) - C(X_0,X_T)-\psi^\star(X_0))] - \langle \psi^\star,\mu_0\rangle .
\end{align*}
%This shows that $\widetilde{\varphi}^n(x)+\widetilde{\psi}^n(y) \to -(\partial_x\ell^*)^{-1}(f(x,y)) -C(x,y)$ for some  $\mathrm{law}(X_0,X_T)$-a.s.
%{\color{red}if we check that $\mathrm{law}(X_0,X_T)\ll \muin\otimes \mufin$}, then it follows by  
%{\color{magenta}
%Camilo: To use  \cite[Proposition 2]{ruschendorf1993note} , we need to know that $-\widetilde{\varphi}^n(X_T) - C(X) - \frac1n - \widetilde\psi^n(X_0)$ converges in $\L(\P_{01})$. This is the case if $\partial_x\ell \Big( \dfrac{\d\widetilde{\Q}^n}{\d\P} \Big )$ is UI. .}
%{\color{cyan}
%By convexity of $\ell$, for every $x \in \R$ such that $x \neq 1$ we have
%$\partial_x\ell(x)\le (\ell(x) - \ell(1))/(x -1) = \ell(x)/(x -1)$. 
%Thus, we have $\partial_x\ell(x)/\ell \le 1/(x -1)$.
%This shows that for every $\varepsilon>0$ there is $M>0$ such that for all $x\ge M$ we have $\partial_x\ell(x)\le \varepsilon \ell(x)$ and hence, there is $C>0$ such that for all $x\in \R$ we have $\partial_x\ell(x) \le \varepsilon \ell(x)+C$.
%This shows that $\partial_x\ell(\frac{d\widetilde{\Q}^n}{d\P})$ converges in $L^1$.
%}

%Thus, it holds $\widetilde{\varphi}^n(X_T) + \widetilde{\psi}^n(X_0) \to \varphi^\star(X_T) + \psi^\star(X_0)$ $\P$-a.s.
Back to \eqref{eq:1.noptim.potential}, using again convexity of $\Phi_{\P}(\cdot)$ and \Cref{prop:dual.f.div}, we have
\begin{align*}
	V_c(\muin,\mufin) &\le \sum_{j\ge n}\lambda^{n,j}\Big( - \langle \Phi_{\P_\cdot}(-\varphi^n(X_T) - C(X_0,X_T)), \muin \rangle - \langle \varphi^n, \mufin\rangle + \frac1n  \Big)\\
		&\le - \int_{\R^\xdim}  \Phi_{\P_x } \Big( - \widetilde{\varphi}^n(X_T) - C(X_0,X_T)   \Big) \muin( \d x)   - \langle \widetilde{\varphi}^n, \mufin\rangle   + \frac1n  \\ 
		&=- \int_{\R^\xdim}   \E^{\P_x }\Big[ \ell^\ast\Big ( -\widetilde \varphi^n(X_T) - C(X_0,X_T) -\widetilde \psi^n(X_0)\Big) + \widetilde \psi^n (X_0) \Big] \muin (\d x)  - \langle \widetilde \varphi^n, \mufin\rangle   + \frac1n.
		%\\
%		& =  \int_{\R^\xdim} \bigg(  \E^{\Q_x^n}\Big[ \widetilde \varphi^n(X_T) + C(X_0,X_T) + \frac1n  \Big] +\Ic_\ell \big(  \Q^n_x |  \P_x\big)  \bigg)   \muin (\d x)  - \langle \widetilde \varphi^n, \mufin\rangle + \frac1n\\
%		& =  \int_{\R^\xdim}  \E^{\P_x}\Big[ C(X_0,X_T)\dfrac{\d \Q^n_x}{\d \P^n_x} +\ell \Big(\dfrac{\d \Q^n_x}{\d \P_x}\Big) \Big]  \muin (\d x) + \frac1n  .
\end{align*}
We now note that by definition of $\ell^\ast$, recall $\ell(1)=0$, we have that $-\ell^\ast(-x)\leq x$.
	Thus, since $\P_{0T}\sim\muin\otimes\mufin$ and $\widetilde{\varphi}^n(x)+\widetilde{\psi}^n(y)\longrightarrow  \varphi^\star(x) + \psi^\star(y)$ in $\L^1(\P_{0T})$, taking the limit 
\begin{align*}
	V_c(\muin,\mufin) &\leq  - \E^{\mu_0\otimes\P_\cdot}\big[ \ell^\ast( -\varphi^\star(X_T) - C(X_0,X_T)-\psi^\star(X_0))] - \langle \psi^\star,\muin\rangle - \langle \varphi^\star,\mufin\rangle\\
	&= - \int_{\R^\xdim}  \Phi_{\P_x }\Big( -  \varphi^\star (X_T) - C(X_0,X_T) \Big)\muin(\d x)  - \langle \varphi^\star,\mufin\rangle = \Phi^{\varphi^\star}(\muin)- \langle \varphi^\star,\mufin\rangle.\qedhere 
\end{align*}
%
%For any propability measure $\Q$ such that $\Q\circ X_0^{-1} = \muin$ and $\Q\circ X_T^{-1} = \mufin$ we thus have
%\begin{equation*}
%	V_c(\muin,\mufin) \le \E^{\Q}\Big[ - \Phi_{\P_\cdot}\Big( - \widetilde{\varphi}^n(X_T) - C(X) + \frac1n + \widetilde{\psi}^n(X_0) \Big)\Big] - \E^{\Q}[ \widetilde{\varphi}^n(X_T) + \widetilde{\psi}^n(X_0) ].
%\end{equation*}
%Taking the limit (why can we? C: I think we can use the second representation in \Cref{lem:Phi=oce} ) we have
%\begin{align*}
%	V_c(\muin,\mufin) &\le \E^{\Q}\Big[ - \Phi_{\P_\cdot}\Big( - \varphi^\star(X_T) - C(X) + \frac1n + \psi^\star(X_0) \Big)\Big] - \E^{\Q}[ \varphi^\star(X_T) + \psi^\star(X_0) ]\\
%	& = - \langle \Phi_{\P_\cdot}(-\varphi^\star(X_T) - C(X)), \muin \rangle - \langle \varphi^\star, \mufin\rangle\\
%	& = \Psi^{\varphi^\star}(\muin) - \langle \varphi^\star, \mufin\rangle.
%\end{align*}
%Hence, $\varphi^\star$ is optimal.
\end{proof}

\begin{remark}
Let us comment that, in the case of a general weak cost terminal constraint, the use of {\rm \cite[Proposition 2]{ruschendorf1993note}} guarantees the summable structure of the limit, but not necessarily that the limit would be of the form $Q_c\varphi^\star+\psi^\star$ for some functions $\varphi^\star,\psi^\star$.
\end{remark}

\subsection{Dual characterization of primal optimizers}
 %follmer2004stochastic,
%(XXXX the two results below extend beyond the case $\xi\in \L^\infty$. We can do so if needed.)
%\begin{proposition}
%\label{prop:properties.oce}
%	The functional $\mathcal{OCE}$ on $\L^\infty$ satisfies the following properties:
%	\begin{itemize}
%		\item[(i)] For any $\xi \in \L^\infty$ and $r \in \R$, we have $\mathcal{OCE}(\xi + r) = \mathcal{OCE}(\xi) + r$.
%		\item[(ii)] For any $\xi^1,\xi^2\in \L^\infty$ with $\xi^1\le \xi^2$, it holds $\mathcal{OCE}(\xi^1)\le \mathcal{OCE}(\xi^2)$.
%		\item[(iii)] The function $\mathcal{OCE}$ is convex.
%	\end{itemize}
%\end{proposition}
%\begin{proof}
%	See e.g. \cite[Theorem 2.1]{bental2007old}.
%\end{proof}

%\subsection{Proof of $(i)$ in the non McKean-Vlasov case}
% {\color{red}unless otherwise stated, we assume that $b(t,x,\mu) = b(t,x)$ does not depend on the measure argument.}

\begin{proof}[Proof of {\rm \Cref{thm:propertyMain}}]
Let us assume that {\rm \Cref{assump.data.2}}$.(ii)$ holds, i.e., $\Ic_\ell$ being superadditive. 
	Since $\varphi^\star$ attains the maximum in the dual problem and $\Q^\star$ attains the minimum in the primal problem, it holds
	\begin{align}\label{eq.thm.i.aux.1}
	 \E^{\Q^\star}[C(X)] +	\Ic_\ell(\Q^\star |\P) = V_c(\muin,\mufin) =  \Psi^{\varphi^\star} (\muin )  - \langle  \varphi^\star , \mufin\rangle.
	\end{align}

We claim that $\langle  \varphi^\star , \mufin\rangle= \E^{\Q^\star}[Q_c\varphi^\star(X_T)]$.
	Indeed, since $\Q^\star$ is feasible for the primal problem we have that $\Wc_c(\Q^\star\circ X_T^{-1},\mu_T)=0$ and thus it follows from \eqref{eq.dualitywot} that $ \E^{\Q^\star}[Q_c\varphi^\star(X_T)]\leq \langle  \varphi^\star , \mufin\rangle$.
	Suppose that $ \E^{\Q^\star}[Q_c\varphi^\star(X_T)]-\langle  \varphi^\star , \mufin\rangle<-\eps$, for some $\eps>0$.
	Since $\Q^\star\in \Ac(\muin)$, thanks to \Cref{lem:Phi=oce} and \eqref{eq.thm.i.aux.1} we have  
\begin{align*}
	\E^{\Q^\star}[C(X)] + \Ic_\ell(\Q^\star |\P) &=\inf_{\P^\alpha\in \Ac(\muin)}\Big(\Ic_\ell\big(\P^{\alpha}|\P\big)   + \E^{\P^\alpha}[Q_c \varphi^\star(X_T) + C(X)]\Big) - \langle  \varphi^\star , \mufin\rangle\\
&\leq  \Ic_\ell\big(\Q^\star |\P\big)   + \E^{\Q^\star}[Q_c \varphi^\star(X_T) +C(X)] - \langle  \varphi^\star , \mufin\rangle\\
&< \Ic_\ell\big(\Q^\star |\P\big) + \E^{\Q^\star}[C(X)] -\eps,
\end{align*}
which is a contradiction.
Thus, $\langle  \varphi^\star , \mufin\rangle= \E^{\Q^\star}[Q_c\varphi^\star(X_T)]$.\medskip

Consequently, \eqref{eq.thm.i.aux.1} and \eqref{eq.OCE.rep.1} imply that 
\begin{align}\label{eq.thm.i.aux.2}\begin{split}
	  \Ic_\ell(\Q^\star |\P)&  +  \E^{\Q^\star}[Q_c\varphi^\star(X_T) + C(X)] =\Psi^{\varphi^\star} (\muin )\\
	  &=   -  \int_{\R^\xdim} \sup_{\Q\ll\P_x}\Big( \E^{\Q}[-Q_c\varphi^\star (X_T) - C(X)] - \Ic_\ell(\Q|\P_x)\Big)  \muin(\d x) +  \Ic_\ell(\muin|\nu_0) .
	  \end{split}
\end{align}
% {\color{orange}(I don't get the equality below because we don't know that $X^\star_x$ is optimal for each $x$.)}
Moreover, notice that because $\Ic_\ell$ is superadditive relative to $\P$, we also have
\begin{align} 
- \Ic_\ell(\Q^\star|\P) - \E^{\Q^\star}[Q_c\varphi^\star(X_T) + C(X)]  & \leq  \int_{\R^m} \Big( \E^{\Q^\star_x}[-Q_c\varphi^\star(X_T) - C(X)] -\Ic_\ell(\Q^\star_x|\P_x)\Big) \muin(dx) - \Ic_\ell(\muin|\nu_0) \label{eq.thm.i.aux.3} \\
&\leq \int_{\R^\xdim} \sup_{\Q\ll\P_x}\Big( \E^{\Q}[-Q_c\varphi^\star (X_T) - C(X)] - \Ic_\ell(\Q|\P_x)\Big)  \muin(\d x)- \Ic_\ell(\muin|\nu_0), \notag
\end{align}
which together with \eqref{eq.thm.i.aux.2} implies that
%{\color{orange}If we knew that , then we would have  which would imply $Q^\star_x$ is optimal for almost every $x$.}

\[
 \int_{\R^\xdim} \sup_{\Q\ll\P_x}\Big( \E^{\Q}[-Q_c\varphi^\star(X_T) - C(X)] - \Ic_\ell(\Q|\P_x)\Big)  \muin(\d x)= \int_{\R^\xdim} \Big(   \E^{\Q^\star_x}[- Q_c\varphi^\star(X_T) -  C(X)]- \Ic_\ell(\Q^\star_x |\P_x )  \Big)  \muin(\d x).
\]

It then follows from \Cref{prop:dual.f.div} that
	\begin{align}\label{eq.thm.i.aux.4}
		\frac{\d \Q^\star_x}{\d \P_x} = \partial_x\ell^*\big(-Q_c\varphi^\star(X_T) - C(X) - \psi^\star_x \big), {\color{black} \P_x\as}, \text{for }\mu_0\ae\, x\in \R^\xdim,
	\end{align}
where $\psi^\star_x \in \R$ denotes the unique real number satisfying $\E^{ \P_x}[\partial_x\ell^*(-Q_c\varphi^\star(X_T)- C(X) - \psi^\star_x)] = 1$.\medskip

	We now show that the map $\iota:\R^\xdim\ni x\longmapsto \psi^\star_x$ is $\Bc(\R^\xdim)$--measurable. 
	To do so, recall that because $(\P_x)_{x\in \R^\xdim}$ denotes the r.c.p.d.~of $\P$ given $\Fc_0=\Bc(\R^\xdim)$, it holds that for any $B\in \Fc_T$, the function $\R^\xdim\ni x\longmapsto \E^{\P_x}[\1{B}]$ is $\Bc(\R^\xdim)$--measurable, and that, under \Cref{assump.data}, $ \ell^\star$ is continuously differentiable.%, see \Cref{rmk.assump.data}.
	We thus consider the measurable map $\pi:\R^\xdim\times\R\ni (x,\psi)\longmapsto  \E^{ \P_x}[\partial_x\ell^*(-Q_c\varphi^\star(X_T)- C(X) - \psi)]\in \R$.
	It remains to note that for any measurable set $B\in \Bc(\R^\xdim)$, $\pi^{-1}(\cdot,B)(1)=\iota^{-1}(B)$ is measurable.
\medskip

Thus, we may write \eqref{eq.thm.i.aux.4} as
	\begin{align}
	\label{eq:ref.ref.}
		\frac{\d \Q^\star_x}{\d \P_x} = \partial_x\ell^*(-Q_c\varphi^\star(X_T) - C(X)- \psi^\star(x)), {\color{black} \P_x\as}, \text{for }\mu_0\ae\, x\in \R^\xdim
	\end{align}
%{\color{magenta} to conclude, below we need to know \eqref{eq:ref.ref.} holds $\nu_0\ae$. Since we only have $\muin\ll\nu_0$ we need something more. This is the case if  where $\P_{0T}\coloneqq \P\circ (X_0,X_T)^{-1}$ }
	for a measurable function $\psi:\R^\xdim \to \R$.
Let $A\in \Fc_T$ and note $A=\bigcup_{x\in \R^\xdim} A_x$, where $A_x\coloneqq\{ \omega \in A: X_0(\omega)=x\}$.
	Then, since $\P_{0T}\sim \muin\otimes\mufin$, by desintegration and \eqref{eq:ref.ref.} we have
\begin{align*}
\Q^\star(A)&=\int_{\R^\xdim} \int_{A_x} \Q^\star_x(\d \omega)  \muin(\d x)\\
		&=\int_{\R^\xdim} \dfrac{\d \muin}{\d \nu_0}(x)  \int_{A_x} \Q^\star_x(\d \omega)  \nu_0(\d x)\\
		&=\int_{\R^\xdim} \dfrac{\d \muin}{\d \nu_0}(x)  \int_{A_x}\partial_x\ell^*\big( -Q_c\varphi^\star(X_T) - C(X) - \psi^\star(X_0)\big)(\omega) \P_x(\d \omega)   \nu_0(\d x)\\
		&=\int_{A} \dfrac{\d \muin}{\d \nu_0}(X_0)  \partial_x\ell^*\big(-Q_c\varphi^\star(X_T)  - C(X)- \psi^\star(X_0)\big)  \P (\d \omega).
\end{align*}
This establishes the result.\medskip

Alternatively, if {\rm \Cref{assump.data.2}}$.(i)$ holds, i.e., $\muin=\nu_0$, then \eqref{eq.thm.i.aux.3} now follows thanks to \Cref{cor.chain}. This is the only step where superadditivity of $\Ic_\ell$ was needed.
Thus, the result follows as above.
\end{proof}

\begin{proof}[Proof of {\rm \Cref{thm:property.main.2}}]
We start by showing $(i)$.
Recall that, see {\rm \Cref{lemma.tv.ctransform}}, for $c(x,\rho)=\int_{\R^\xdim} \1{x\neq y}\rho(\d y)$, it holds $Q_c\hat \varphi(x)=\hat\varphi(x)$.
\medskip

First, we show that
	\begin{equation*}
		\frac{\d  \Q_x}{\d \P_x} = \partial_x\ell^*(-\hat \varphi(X_T) - C(X) - \hat \psi(x)),  {\color{black} \P_x\as}, \text{for }\mu_0\ae\, x\in \R^\xdim.
	\end{equation*}
Let $f\in \L^0(\Q)$.
By definition of $\Q$, we have
\begin{align*}
\int_{\Omega} f(\omega) \Q(\d\omega) &=\int_{\Omega} \dfrac{\d \muin}{\d \nu_0}(X_0)  \partial_x\ell^*(-\hat \varphi (X_T) -C(X) - \hat \psi (X_0)) f(\omega) \P (\d \omega)  \\
&=\int_{\R^\xdim} \dfrac{\d \muin}{\d \nu_0}(x)   \int_{\Omega_x}\partial_x\ell^*(-\hat \varphi(X_T)  -C(X)- \hat \psi (X_0)) f(\omega) \P_x(\d \omega)      \nu_0(\d x) \\
&=\int_{\R^\xdim} \int_{\Omega_x}\partial_x\ell^*(-\hat \varphi(X_T) - C(X) - \hat \psi(x))  f(\omega)   \P_x(\d \omega)   \muin  (\d x),
\end{align*}
but also,
\begin{align*}
\int_{\Omega} f(\omega) \Q(\d\omega) =\int_{\R^\xdim} \int_{\Omega_x} f(\omega)\Q_x(\d \omega)  \muin(\d x).
\end{align*}
The claim follows because by Radon-Nikodym theorem we have 
% as, since $\muin\in {\rm Prob}(\R^\xdim)$, we have that
\begin{align*}
 \int_{\Omega_x}\partial_x\ell^*(-\hat \varphi(X_T) - C(X) - \hat \psi (x))  f(\omega)   \P_x(\d \omega)= \int_{\Omega_x} f(\omega)\Q_x(\d \omega),  \text{ for }\mu_0\ae\, x\in \R^\xdim.
 \end{align*}
 %so that applying Radon-Nikodym again yields the result.
 
 Second, thanks to {\rm \Cref{thm:exists.duality}}, we have that
 \begin{align*}
V_c(\muin,\mufin)&\geq 	  -\int_{\R^\xdim} \Phi_{\P_x}\big(-\hat \varphi(X_T) - C(X)\big) \muin(\d x) +\Ic_\ell(\muin|\nu_0)-\langle \hat \varphi, \mufin\rangle\\
& = \E^{ \Q}[\hat\varphi(X_T)+  C(X)]-\langle \hat \varphi, \mufin\rangle +\int_{\R^\xdim} \Ic_\ell( \Q_x|\P_x) \muin(\d x) +\Ic_\ell(\muin|\nu_0)\\
& = \int_{\R^\xdim} \Ic_\ell( \Q_x|\P_x) \muin(\d x) +\Ic_\ell(\muin|\nu_0) \geq \E^{ \Q}[  C(X)] + \Ic_\ell( \Q|\P) \geq V_c(\muin,\mufin)
\end{align*}
where the first equality follows from the first step and \eqref{prop:dual.f.div}.
	The second equality follows since $\Q\circ X_T^{-1}=\mufin$, as $\hat\Q\in \Pc_c(\muin,\mufin) $ for the choice of $c$ in the statement, and thus $\E^{\Q}[\hat\varphi(X_T)]=\langle \hat \varphi, \mufin\rangle$.
	The third step follows from \Cref{cor.chain} in the case $\muin=\nu_0$ or by the subadditivity of $\Ic_\ell$.
	 The last step follows by the definition of $V_c(\muin,\mufin)$.
	 The equality to $V_c(\muin,\mufin)$ of the second term implies that $\hat\varphi$ is dual optimal and equality to $\E^{ \Q}[  C(X)] + \Ic_\ell( \Q|\P)$ implies $\Q$ is primal optimal.

\medskip

We now argue $(ii)$.	
To this end, we first show that for $\Q$ as in the statement, we have that
	\begin{align}\label{eq.system.aux0}
			\dfrac{\d \overleftarrow \Q}{\d \overleftarrow \P} =\frac{\d\overleftarrow \Q_0 }{\d\overleftarrow \P_0} (\overleftarrow X_{\!0})   \partial_x\ell^*\big(  - \hat \varphi(\overleftarrow X_{\!T}) - C(\overleftarrow X) -\hat \psi(\overleftarrow X_{\!0})\big).
	\end{align}
First, letting $\{(\overleftarrow \Q)_x\}_{x\in \R^\xdim}$ denote the r.c.p.d.~of $\overleftarrow \Q$ with respect to $\sigma(\overleftarrow X_0)$, we claim that
\begin{align}\label{eq.system.aux1}
(\overleftarrow \Q)_x=\Q_x\circ \overleftarrow{\Tc}.
\end{align}
On the one hand, note that by definition of $(\overleftarrow \Q)_x$ we have that for any measurable $\xi:\Omega\longrightarrow \R$ and $f:\R\longrightarrow [0,\infty)$
 \[
\E^{ \overleftarrow\Q}[ \E^{{\overleftarrow \Q}_{\overleftarrow X_{\!0}}} [\xi]f( \overleftarrow X_{\!0}) ]= \E^{\overleftarrow \Q}[ \E^{\overleftarrow \Q}[\xi | \sigma(\overleftarrow X_{\!0})]f(\overleftarrow X_{\!0})]=\E^{\overleftarrow \Q}[  \xi f(\overleftarrow X_{\!0})]=\E^{ \Q}[  \xi\circ \overleftarrow{\Tc}^{-1} f( X_0)].
 \]
On the other hand, since $\E^{\Q_x\circ \overleftarrow{\Tc}}[\xi]=\E^{\Q_x}[\xi\circ \overleftarrow{\Tc}^{-1}]$, we have that
\[
\E^{ \Q}[ \E^{\Q_{X_0}\circ \overleftarrow \Tc}[\xi]f( X_0) ]=\E^{ \Q}[ \E^{\Q_{X_0}}[\xi\circ \overleftarrow{\Tc}^{-1}]f( X_0) ]=\E^{ \Q}[  \xi\circ \overleftarrow{\Tc}^{-1} f( X_0)].
\]

That is
\[
\E^{ \overleftarrow\Q}[ \E^{{\overleftarrow \Q}_{\overleftarrow X_{\!0}}} [\xi]f( \overleftarrow X_{\!0}) ] =\E^{ \Q}[ \E^{\Q_{X_0}\circ \overleftarrow \Tc}[\xi]f( X_0)].
\]
The claim follows from the arbitrariness of $\xi$ and $f$.

\medskip

We now establish \eqref{eq.system.aux0}. Notice that, as in the proof of {\rm \Cref{thm:propertyMain}}, we may deduce that
\begin{equation}\label{eq.system.aux2}
	\frac{\d \Q_x}{\d \P_x} = \partial_x\ell^*(-\hat \varphi(X_T) - C(X) - \hat \psi(x)),  {\color{black} \P_x\as}, \text{for }\mu_0\ae\, x\in \R^\xdim.
\end{equation}
%{\color{red}why $\hat\Q$?}
%{\color{red}below, should $A$ be $A_x$? and should we precise that $\overleftarrow{\Q}_x$ really means $(\overleftarrow{\Q})_x$?}
%{\color{magenta} Changin $A$ to $A_x$ doens't really matter since $A_x$ is the support under the r.c.p.d.'s and the composition with $\overleftarrow \Tc$ is better expressed using the indicator. We know write $(\overleftarrow{\Q})_x$ and $(\overleftarrow{\P})_x$ since this is the proper meaning}
Let $A\subseteq \Omega$, and note that
\begin{align*}
\overleftarrow \Q(A)=\int_\Omega \mathds{1}_A(\omega) \overleftarrow \Q(\d \omega)& = \int_{\R^\xdim}\int_\Omega \mathds{1}_A(\omega)\,    (\overleftarrow \Q)_x (\d \omega)  \overleftarrow \Q_{\!0} (\d x)\\
& =\int_{\R^\xdim} \int_\Omega \mathds{1}_A(\omega)\,    \Q_x\circ \overleftarrow \Tc (\d \omega)  \overleftarrow \Q_{\!0} (\d x)\\
& =\int_{\R^\xdim}\int_\Omega \mathds{1}_A(\overleftarrow \Tc^{-1} (\omega) )\, \partial_x\ell^*(-\hat \varphi(X_T) - C(X) - \hat \psi(X_0))  \P_x (\d \omega)  \overleftarrow \Q_{\!0} (\d x)\\
& =\int_{\R^\xdim}\frac{\d\overleftarrow \Q_0 }{\d\overleftarrow \P_0} (x)  \int_\Omega \mathds{1}_A(\omega) \partial_x\ell^*(-\hat \varphi(X_T\circ \overleftarrow \Tc) - C(\overleftarrow  X ) - \hat \psi(X_0\circ \overleftarrow \Tc))   \P_x\circ \overleftarrow \Tc (\d \omega)    \overleftarrow \P_{\!0} (\d x)\\
& =\int_{\R^\xdim}\int_\Omega \mathds{1}_A(\omega)\frac{\d\overleftarrow \Q_0 }{\d\overleftarrow \P_0} (x)  \partial_x\ell^*(-\hat \varphi( \overleftarrow  X_{\!T}) - C(\overleftarrow  X ) - \hat \psi( \overleftarrow  X_{\!0}))  ( \overleftarrow  \P)_x  (\d \omega)    \overleftarrow \P_{\!0} (\d x)\\
& = \int_\Omega \mathds{1}_A(\omega)\frac{\d\overleftarrow \Q_0 }{\d\overleftarrow \P_0} ( \overleftarrow  X_{\!0})  \partial_x\ell^*(-\hat \varphi( \overleftarrow  X_{\!T}) - C(\overleftarrow  X ) - \hat \psi( \overleftarrow  X_{\!0}))   \overleftarrow  \P  (\d \omega)
\end{align*}
where the third and sixth steps follow from \eqref{eq.system.aux0}, the fifth step is due to \eqref{eq.system.aux1}.\medskip

%
%Notice that if we define $\Q_{0T}(A)\coloneqq \Q\circ (X_0,X_T)^{-1}(A)$ for any $A\in \Bc(\R^2)$ and $\Q^{xy}(\cdot)\coloneqq \Q(\cdot | X_0=x,X_T=y)$, for $\Q$ as above
%\begin{align*}
%			\dfrac{\d \Q^{xy}}{\d \P^{xy}} =  \partial_x\ell^*\big(  - \hat \varphi(y) - C(X) -\hat \psi(x)\big), \Q^{xy}\as
%\end{align*}
%By the previous step, it follows that
%\begin{align*}
%			\dfrac{\d \overleftarrow \Q^{yx}}{\d\overleftarrow  \P^{yx}} =  \partial_x\ell^*\big(  - \hat \varphi(y) - C(\overleftarrow X) -\hat \psi(x)\big), \overleftarrow \Q^{yx}\as.
%\end{align*}
%
%
%If the map $(x,y)\longmapsto  \partial_x\ell^*\big(  - \hat \varphi(y) - C(\overleftarrow X(\omega_{xy})) -\hat \psi(x)\big)$ where $\omega_{xy}\in\Omega$ such that $(\omega_0,\omega_T) =(x,y)$
%
%It then follows that
%\begin{align*}
%			\dfrac{\d \overleftarrow \Q^{y}}{\d\overleftarrow  \P^{y}} =  \partial_x\ell^*\big(  - \hat \varphi(y) - C(\overleftarrow X) -\hat \psi(\overleftarrow X_T  )\big), \overleftarrow \Q^{y}\as.
%\end{align*}

We now argue \eqref{eq.prop.system}. The first equation follows since $\Q_0=\muin$ using the density of $\Q$ with respect to $\P$. 
	Let us argue the second equation in \eqref{eq.prop.system}.
	Let $f:\R^\xdim\longrightarrow \R$ be bounded continuous and recall that
%	\begin{align*}
%			\dfrac{\d \Q}{\d \P} =\frac{\d\Q_0 }{\d\P_0} (X_0)   \partial_x\ell^*\big(  - \hat \varphi(X_T) - C(X) -\hat \psi(X_0)\big), \; \dfrac{\d \overleftarrow \Q}{\d \overleftarrow \P} =\frac{\d\overleftarrow \Q_0 }{\d\overleftarrow \P_0} (\overleftarrow X_0)   \partial_x\ell^*\big(  - \hat \varphi(\overleftarrow X_0) - C(\overleftarrow X) -\hat \psi(\overleftarrow X_T)\big).
%	\end{align*}
%$\Q_0=\muin$ we find that
%	\begin{align*}
%			\int_{\R} f (x)\muin(\d x)=\E^\Q[ f(X_0)] &=\E^\P\Big[ f(X_0) \dfrac{\d \Q}{\d \P} \Big]\\
%			& = \int_{\R} f(x) \frac{\d\Q_0 }{\d\P_0} (x) \int_{\Omega}     \partial_x\ell^*\big(  - \hat \varphi(X_T) - C(X) -\hat \psi(X_0)\big)\P_x(\d \omega)\P_0(\d x)\\
%			& = \int_{\R} f(x)   \int_{\Omega}   \partial_x\ell^*\big(  - \hat \varphi(X_T) - C(X) -\hat \psi(X_0)\big)\P_x(\d \omega)\muin(\d x)\\
%			& = \int_{\R} f(x)  \E^{\P} \big[  \partial_x\ell^*\big(  - \hat \varphi(X_T) - C(X) -\hat \psi(X_0)\big) |X_0=x\big]\muin(\d x)
%	\end{align*}
%
%similarly, 
because $\Q_T=\mufin$, it follows from \eqref{eq.system.aux0} that
	\begin{align*}
			\int_{\R} f (x)\mufin(\d x)=\E^\Q[ f(X_T)] =\E^{\overleftarrow \Q}[ f( X_0)] &=\E^{\overleftarrow \P}\Big[ f( X_0) \dfrac{\d \overleftarrow\Q}{\d \overleftarrow \P} \Big]\\
			& = \int_{\R} f(x) \frac{\d\overleftarrow \Q_0 }{\d\overleftarrow \P_0} (x) \int_{\Omega}     \partial_x\ell^*\big(  - \hat \varphi(\overleftarrow X_{\!T}) - C(\overleftarrow X) -\hat \psi(\overleftarrow X_{\!0})\big)( \overleftarrow  \P)_x (\d \omega)\overleftarrow \P_{\!0}(\d x)\\
			& = \int_{\R} f(x)   \int_{\Omega}   \partial_x\ell^*\big(  - \hat \varphi(\overleftarrow X_{\!T}) - C(\overleftarrow X ) -\hat \psi(\overleftarrow X_{\!0})\big)( \overleftarrow  \P)_x (\d \omega)\mufin(\d x)\\
			& = \int_{\R} f(x)  \E^{\overleftarrow \P} \big[  \partial_x\ell^*\big(  - \hat \varphi(\overleftarrow X_{\!T}) - C(\overleftarrow X) -\hat \psi(\overleftarrow X_{\!0})\big) |  \overleftarrow X_{\!0}=x\big]\mufin(\d x).
	\end{align*}
	This ends the proof.
\end{proof}

%{\color{red}I think here the issue is that at the very end, rather than conditioning wrt $\overleftarrow{X}_0$ we should be conditioning wrt $X_0 = \overleftarrow{X}_T$ because the canonical process is $X$, not $\overleftarrow{X}$. And this turns our to also be consistent with want we want.}

\begin{proof}[Proof of {\rm \Cref{rem:entropic-Schroedinger}}]
If, in addition, $\P$ is reversible, we claim that $\E^\P[\xi|\sigma(X_0)]=\E^\P[\xi|\sigma(X_T)]$ for any measurable function $\xi$. Assuming the claim for a moment, we have that
\begin{align*}
&\int_{\R} f(x)  \E^{\overleftarrow \P} \big[  \partial_x\ell^*\big(  - \hat \varphi(\overleftarrow X_{\!T}) - C(\overleftarrow X) -\hat \psi(\overleftarrow X_{\!0})\big) |  \overleftarrow X_{\!0}=x\big]\mufin(\d x)\\
&=\int_{\R} f(x)  \E^{\overleftarrow \P_x \circ \overleftarrow \Tc } \big[  \partial_x\ell^*\big(  - \hat \varphi( X_T) - C( X) -\hat \psi( X_0)\big) \big]\mufin(\d x)\\
&=\int_{\R} f(x)  \E^{ \P_x  } \big[  \partial_x\ell^*\big(  - \hat \varphi( X_T) - C( X) -\hat \psi( X_0)\big) \big]\mufin(\d x)\\
&=\int_{\R} f(x)  \E^{ \P} \big[  \partial_x\ell^*\big(  - \hat \varphi( X_T) - C( X) -\hat \psi( X_0)|X_T=x]\big) \big]\mufin(\d x).
\end{align*}

It remains to show the $\E^\P[\xi|\sigma(X_0)]=\E^\P[\xi|\sigma(X_T)]$ for any measurable function $\xi$. 
This is obtained by 
%{\color{red}We never defined the function $\Tc$ (see first expression, second line. In the third equality, shouldn't we have $\xi\circ \overleftarrow{\Tc}^{-1}$? so that in t)}
\begin{align*}
\E^\P[\E^\P[ \xi |\sigma(X_0)]f(X_0)] &=\E^\P[  \xi f(X_0)]=\E^{\overleftarrow \P}[  \xi f(X_0)]=\E^{ \P}[  \xi\circ \overleftarrow \Tc^{-1}  f(X_T)]=\E^\P[\E^\P[ \xi\circ\overleftarrow \Tc^{-1}  |\sigma(X_T)]f(X_T)]\\
&=\E^\P[\E^{ \P_{X_T}\circ \overleftarrow \Tc } [ \xi ]f(X_T)] =\E^\P[\E^{ (\overleftarrow \P)_{X_T} } [ \xi ]f(X_T)] =\E^\P[\E^{ \P_{X_T} } [ \xi ]f(X_T)]\\
&=\E^\P[\E^{ \P} [ \xi  |\sigma(X_T)]f(X_T)]
\end{align*}
where the second and last steps follow from the reversibility of $\P$ and the sixth step is due to \eqref{eq.system.aux1} since $\sigma(X_T)=\sigma(\overleftarrow X_{\!0})$.
This concludes the proof.
\end{proof}

\subsection{Dual characterization of the marginals}

The proof of Theorem \ref{thm:propertyMain} will build upon elements of optimal stochastic control as well as properties of time-reversed diffusions.
Before presenting the proof, let us derive some intermediate results from stochastic optimal control theory.
In the rest of the paper, we assume $C=0$ and $b$ is Markovian and of gradient form, i.e., $b(t,x) = -\partial_x U(x(t))/2$, $(t,x)\in [0,T]\times C([0,T],\R^\xdim)$, for some continuously differentiable function $U:\R^\xdim\longrightarrow \R$.

% \begin{theorem}
% \label{thm:duality2}
% {\color{red}case 1: $b$ does not depend on measure.}\\

% 	The problem \eqref{eq.SP} admits the convex dual representation
% 	\begin{equation}
% 	\label{eq:dualprobl.2}
% 		V_c(\muin,\mufin) = \sup_{\varphi\in C_{b,p}}\Big(  \Psi^\varphi(\muin) - \langle \varphi,\mufin\rangle\Big) %= \sup_{\varphi\in C_{b,p}}\Big( \langle \Phi^\varphi,\muin\rangle + \langle \varphi,\mufin\rangle\Big)
% 	\end{equation}
% 	%where $\Phi^\varphi(x) := \Psi^\varphi(\delta_x)$, 
% 	with the function $\Psi^\varphi$ being defined as
% 	\begin{equation}
% 	\label{eq:defPhi2}
% 		\Psi^\varphi(\mu) := \inf_{\P\in \Pc(\mu)}\E^{\P}\bigg[\int_0^T\ell''(Z_t)Z_t|\alpha_t|^2dt + Q_c\varphi(X_T) \bigg] +\Ic_\ell(\mu|\nu_0) ,
% 	\end{equation}
% \end{theorem}

\subsubsection{Tidbits of stochastic optimal control theory}

% Given a function $\varphi\in C_{b,p}(\R^\xdim)$, the function $\Phi^\varphi$ is the value function of a MFC problem.
% Such problems have attracted a sustained attention in recent years in probability theory XXX and applications XXXX, mostly due to its relevance in the analysis of large population games and its link to PDEs on the Wasserstein space.
% The peculiarity of the present setting is that one of the state processes $Z$ has controlled volatility, which makes it harder to study.
% In addition, the terminal cost $Q_c\varphi$ is not a priori smooth, and the running cost may grow faster than typically assumed.
% The next result provides existence of an optimal control.
% In particular, we give conditions under which the control can be taken Markovian. Note that we call "Markovian" a control $\alpha$ such that $\alpha_t = \eta(t,X_t,\P^{\alpha}\circ X_t^{-1})$ for a Borel measurable function $\eta:[0,T]\times \R^\xdim\times \Pc_2(\R^\xdim)\to A$.

In this subsection we take a closer look at the stochastic  optimal control problem with value
\begin{equation}
\label{eq:cont.problem}
	\Psi^\varphi(\mu) - \Ic_\ell(\mu|\nu_0) = \inf_{\P^\alpha\in \Ac(\mu )}\E^{\P^\alpha} \bigg[ \int_0^T \frac12 \ell^{\prime\prime}(Z_t)Z_t \|\alpha_t\|^2 \d t  +  Q_c \varphi(X_T) \bigg],\; \d Z_t = Z_t\alpha_t\cdot \d W_t,\; {\color{black} Z_0=\dfrac{\d \mu}{\d \nu_0}(X_0)}.
\end{equation}
We have already seen that for every $\varphi\in C_{b,p}(\R^\xdim)$,  %and Proposition \ref{prop:dual.f.div}, 
we have
	\begin{align*}
		\Psi^\varphi(\mu) - \Ic_\ell(\mu|\nu_0) &= \inf_{\P^\alpha\in \Ac(\mu)}\Big(\Ic_\ell\big(\P^{\alpha}|\P\big)   + \E^{\P^\alpha}[Q_c \varphi(X_T)]\Big),
		%& = \E^{\hat \Q}[Q_c\varphi(X_T)] - \Ic_\ell(\hat\Q|\P) + \Ic_\ell(\mu|\nu_0)
	\end{align*}
	see \Cref{lem:Phi=oce}.
	Since $\varphi$ (and thus $Q_c\varphi(X_T)$) is bounded from below, it follows exactly as in the proof of \Cref{prop.v.lsc} that an optimal measure $ \Q \in \Pc(\mu)$ exists.
	Thus, by \Cref{lemma.ref} we deduce that the problem \eqref{eq:cont.problem} admits an optimal $\P^{ \alpha^\star}\in \Ac(\mu)$ for a control process $\alpha^\star \in \H_{\text{loc}}^2$.
	Our goal in this section is to further analyze properties of the optimizer $\alpha^\star$.

\begin{proposition}
\label{prop:optimal.control.Markovian.entropy}
	Let {\rm \Cref{assump.data}} be satisfied and put $\ell(x) =  x\log(x) -x +1$.
	Then for any $\varphi\in B_{b,p}(\R^\xdim)$ the problem \eqref{eq:cont.problem} admits a Markovian optimal control $\alpha^\star$ satisfying
%	Moreover, if $\ell(x) =  x\log(x) -x +1$, then this optimal control can be taken Markovian and satisfies 
	\[
	\alpha_t^\star = - \Zc_t= -  u(t,X_t),\,  \d \P\otimes \d t\ae~\text{on } [0,T]\times\Omega,
	\]
	where $(\Yc,\Zc)$ is a pair of progressively measurable processes satisfying
	\begin{equation}
	\label{eq:pro.bsde}
		\Yc_t = Q_c\varphi(X_T) + \int_t^T\frac{1}{2}|\Zc_s|^2 ds - \int_t^T\Zc_s\d W_s, \, t\in [0,T],\P\as,
	\end{equation}
	and $u:[0,T]\times \R^\xdim \longrightarrow  \R^{m}$ a Borel-measurable function.

	If in addition $Q_c\varphi$ and $\partial_x U$ are continuously differentiable, %and $\xi\sigma^\top\sigma\xi\ge c|\xi|^2 $ for some $c>0$, 
	then $\alpha_t^\star = \partial_xv(t,X_t) , t \in [0,T], \P\as$, where $v$ is the classical solution of the {\rm PDE}
	\begin{equation}
	\label{eq:PDE.prop}
	\begin{cases}
	 	\partial_t v(t,x) - \frac12\partial_xv(t,x)\partial_x U(x)  +\frac12\partial_{xx}v(t,x)  -\frac12|\partial_xv(t,x)|^2 = 0,\, (t,x)\in [0,T)\times\R^\xdim\\ 
	 	 v(T,x) = Q_c\varphi(x).
	 	\end{cases}
 	 \end{equation} 
\end{proposition}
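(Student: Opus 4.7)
The plan is to exploit the algebraic collapse $\ell''(z)z = 1$ afforded by the entropic choice of $\ell$ and reduce \eqref{eq:cont.problem} to a classical drift-control problem with quadratic running cost, and then characterize the optimizer through the quadratic BSDE \eqref{eq:pro.bsde}. Indeed, the running cost becomes $\frac12\|\alpha_t\|^2$ and by Girsanov, $W^\alpha_t := W_t - \int_0^t\alpha_s\,\d s$ is a $\P^\alpha$-Brownian motion and $X$ satisfies $\d X_t = (b(t,X_t) + \alpha_t)\,\d t + \d W^\alpha_t$ under $\P^\alpha$. Thus \eqref{eq:cont.problem} is a classical drift-control problem with quadratic control cost and terminal payoff $Q_c\varphi(X_T)$.

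For existence, uniqueness, and Markovian structure of the solution $(\Yc,\Zc)$ to \eqref{eq:pro.bsde}, I would invoke the theory of quadratic BSDEs with one-sided bounded terminal condition. Because $Q_c\varphi \in B_{b,p}(\R^m)$ is bounded from below, $e^{-Q_c\varphi(X_T)}$ is bounded from above and a Cole--Hopf transformation gives, up to the sign conventions in \eqref{eq:pro.bsde},
$$\Yc_t = -\log \E\bigl[e^{-Q_c\varphi(X_T)}\,\big|\,\Fc_t\bigr],$$
from which existence, uniqueness, and exponential integrability of $\Yc$ follow, as well as the Markovian representation $\Yc_t = \bar v(t,X_t)$ and $\Zc_t = u(t,X_t)$ for Borel functions $\bar v,u$, by the $\P$-Markov property of $X$. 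Verification of optimality then proceeds through a completion-of-squares identity: substituting $\d W = \d W^\alpha + \alpha\,\d t$ in the stochastic integral of \eqref{eq:pro.bsde} and taking $\P^\alpha$-expectation yields, for any admissible $\P^\alpha$,
$$\E^{\P^\alpha}\Bigl[\int_0^T \tfrac12\|\alpha_t\|^2\,\d t + Q_c\varphi(X_T)\Bigr] = \Yc_0 + \E^{\P^\alpha}\Bigl[\int_0^T \tfrac12\|\alpha_t + \Zc_t\|^2\,\d t\Bigr],$$
minimized at $\alpha^\star_t = -\Zc_t = -u(t,X_t)$, which delivers the first claim. The principal technical obstacle is justifying that $\int_0^\cdot \Zc_s\,\d W^\alpha_s$ is a true $\P^\alpha$-martingale uniformly in admissible $\alpha$; this is handled through a BMO estimate for $\Zc$ inherited from the exponential moment bound on $\Yc$, together with the Kazamaki-type stability of BMO martingales under Girsanov's change of measure.

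Under the additional regularity that $Q_c\varphi$ and $\partial_x U$ are continuously differentiable, the Cole--Hopf factor $\psi := e^{-\bar v}$ solves the linear backward Kolmogorov equation $\partial_t\psi + b\partial_x\psi + \frac12\partial_{xx}\psi = 0$ with terminal datum $\psi(T,\cdot) = e^{-Q_c\varphi}$; classical parabolic regularity then produces a $C^{1,2}$ solution, so that $v := -\log\psi$ is a classical solution of the semilinear HJB equation \eqref{eq:PDE.prop} with $v(T,\cdot) = Q_c\varphi$. Applying Itô's formula to $v(t,X_t)$ under $\P$ shows that $(v(\cdot,X_\cdot),\partial_xv(\cdot,X_\cdot))$ solves \eqref{eq:pro.bsde}, and BSDE uniqueness forces $(\Yc,\Zc) = (v(\cdot,X_\cdot),\partial_xv(\cdot,X_\cdot))$, yielding the feedback law $\alpha^\star_t = \partial_xv(t,X_t)$ claimed in the second half of the statement.
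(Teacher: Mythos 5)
Your proposal follows essentially the same Cole--Hopf route as the paper, but is more self-contained: the paper simply cites \cite[Theorem 2.3]{lacker2017limit} for Markovianity of the optimal control, \cite[Theorem A.1]{Poss-Tang-24} for the BSDE representation $\alpha^\star=-\Zc$, and \cite[Theorem 3.1]{ma2002representation} for the PDE identification, whereas you reconstruct the BSDE solution through the log-exponential martingale, verify optimality by completing the square, and deduce the PDE via Feynman--Kac and parabolic regularity. That explicitness is a real improvement.

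However, there is a sign problem in the verification step as written that you must confront rather than wave off ``up to sign conventions.'' From \eqref{eq:pro.bsde} with the $+\tfrac12|\Zc_s|^2$ generator, the Girsanov substitution $\d W=\d W^\alpha+\alpha\,\d t$ gives
\begin{align*}
\E^{\P^\alpha}\Big[\tfrac12\!\int_0^T\!|\alpha_t|^2\d t+Q_c\varphi(X_T)\Big]
= \Yc_0 + \E^{\P^\alpha}\Big[\int_0^T\big(\tfrac12|\alpha_t|^2-\tfrac12|\Zc_t|^2+\Zc_t\cdot\alpha_t\big)\d t\Big]
= \Yc_0 + \E^{\P^\alpha}\Big[\int_0^T\big(\tfrac12|\alpha_t+\Zc_t|^2-|\Zc_t|^2\big)\d t\Big],
\end{align*}
which is \emph{not} the perfect square you claim; the residual $-|\Zc_t|^2$ depends on $\alpha$ through $\P^\alpha$ and the argument does not close. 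The completion-of-squares identity you display is correct only for the generator $-\tfrac12|\Zc_s|^2$, i.e., $\Yc_t=Q_c\varphi(X_T)-\int_t^T\tfrac12|\Zc_s|^2\d s-\int_t^T\Zc_s\,\d W_s$. And indeed, applying It\^o to your own Cole--Hopf ansatz $\Yc_t=-\log\E[e^{-Q_c\varphi(X_T)}\,|\,\Fc_t]$ produces precisely that BSDE, with $\Zc_t=\partial_x\bar v(t,X_t)$ and $\bar v$ solving the semilinear HJB \eqref{eq:PDE.prop} (the quadratic nonlinearity $-\tfrac12|\partial_x v|^2$ is exactly what one gets from $\inf_a\{a\cdot\partial_xv+\tfrac12|a|^2\}$). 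With these signs the verification gives $\alpha_t^\star=-\Zc_t=-\partial_x\bar v(t,X_t)$, not $+\partial_x v$ as asserted in your last sentence (and in the statement). In other words: your Cole--Hopf factor, your completion of squares, and the HJB \eqref{eq:PDE.prop} are mutually consistent, whereas the $+$ in \eqref{eq:pro.bsde} and the $+\partial_xv$ feedback are not consistent with them. You should make this explicit — state the BSDE with generator $-\tfrac12|\Zc|^2$, identify $\Zc_t=\partial_xv(t,X_t)$, and conclude $\alpha^\star_t=-\partial_xv(t,X_t)$ — rather than silently inheriting the discrepant signs from the statement.
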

\begin{proof}

	% for some $\Q\ll \P$.
	% By (the proof of) Lemma \ref{lemma.ref}, it holds $\Q\in \Ac(\mu)$. 
	% That is, there is $\hat\alpha \in \L^2_\text{loc}$ such that $\frac{\d\Q\!\!}{\d\P\hfill} =\1{\frac{\d\Q\!\!}{\d\P\hfill}>0}\frac{\d\mu }{\d\nu_0} (X_0)Z_T$ with $ \d Z_t=\hat \alpha_tZ_t\d W_t$, $Z_0=1$.
	%Thanks to our Brownian filtration, there is a square integrable, predictable process $\beta$ such that $d\hat\Q = \Ec(\int_0^\cdot\beta_sdW_s)_Td\P$.
%	Thus, 
	%putting $\hat\alpha_t := \Pi_A(\sigma^{-1}(X_t)\beta_t - b(t, X_t, \hat \Q\circ X^{-1}_t))$, we have 
%	using It\^o's formula, we have
	% as in the proof of Lemma \ref{lem:Phi=oce})
% 	\begin{equation*}
% 		-\Psi^\varphi(\mu) = \E^{\P^{\hat \alpha}}\bigg[Q_c\varphi(X_T) - \int_0^T\frac12\ell''(Z^{\hat\alpha}_t)Z^{\hat\alpha}_t|\hat\alpha_t|^2dt \bigg] + \Ic(\mu|\nu_0)
% 	\end{equation*}
% 	which shows that $\hat\alpha$ is optimal.
% %	Since $\hat \Q$ is unique, this implies that $\beta$ is unique and thus $\hat\alpha$ is unique.
% 	% Now, assume that $\ell(z) = z\log(z) -z + 1$.
	% Then,
	% \begin{align*}
	% 	-\Phi^\varphi(x) &= \sup_{\alpha\in \Ac(\delta_x)}\E^{\P^\alpha}\bigg[Q_c\varphi(X_T) - \int_0^T\frac12|\alpha_t|^2dt \bigg]\\
	% 	& = \E^{\P^{\hat\alpha}}\bigg[Q_c\varphi(X_T) - \int_0^T\frac12|\hat\alpha_t|^2dt\bigg].
	% \end{align*}

\medskip

If $\ell(x) = x\log(x) -x +1$, then the stochastic optimal control problem becomes
\begin{equation*}
	\Psi^\varphi(\mu) - \Ic_\ell(\mu|\nu_0) = \inf_{\P^\alpha\in \Ac(\mu )}\E^{\P^\alpha} \bigg[ \frac12\int_0^T  \|\alpha_t\|^2 \d t  +  Q_c \varphi(X_T) \bigg].
\end{equation*}
In this case, it is classical that the optimal control is Markovian.
See, e.g., \cite[Theorem 2.3]{lacker2017limit}.
The representation $\alpha_t^\star = -\Zc_t$
 with $(\Yc, \Zc)$ satisfying \eqref{eq:pro.bsde} follows, e.g., by \cite[Theorem A.1]{possamai2024policy}.

\medskip

Let $M$ be the continuous martingale $M_t:=\E[e^{Q_c\varphi(X_T)}|\Fc_t]$.
Let $N_t$ be the unique process such that $M_t = Q_c\varphi(X_T) - \int_t^TN_s\d W_s$.
Then, it follows by It\^o's formula applied to $\Yc_t = \log(M_t)$ that $\Zc_t = N_t/M_t$.
Moreover, by \cite[Lemma 3.3]{el1997backwardii} there are two Borel-measurable functions $e:[0,T]\times \R^\xdim\longrightarrow  \R$ and $d:[0,T]\times \R^\xdim\longrightarrow \R^d$ such that, $M_t = e(t,X_t), t\in [0,T],\P\as$, and, $N_t =  d(t,X_t),\d \P\otimes \d t\ae~\text{on } [0,T]\times\Omega$.
Thus it holds that $\Yc_t = v(t,X_t), t\in [0,T], \P\as$ and $\Zc_t =   u(t,X_t), \d \P\otimes \d t\ae~\text{on } [0,T]\times\Omega$, for two Borel-measurable functions $u$ and $v$.

 If $Q_c\varphi$ and $b$ are continuously differentiable, then it follows by \cite[Theorem 3.1]{ma2002representation} that $\alpha_t^\star =  \partial_xv(t,X_t)$ where $v$ solves the PDE \eqref{eq:PDE.prop}.
	% By the mimicking theorem \cite[Corollary 1.6]{lacker2020superposition}, there is a probability space $(\hat\Omega, \hat \Fc, \hat\P)$ with Brownian motion $\hat W$ and a process $\hat X$ such that $\hat\P\circ \hat X^{-1} = \P^{\hat \alpha}\circ X^{-1}$, and
	% \begin{equation*}
	% 	d\hat X_t = \eta(t,\hat X_t) + b(t,\hat X_t)dt + \sigma d\hat W_t,\quad \hat\P\as
	% \end{equation*}
	% where $\eta(t,x) = \E^{\P^{\hat\alpha}}[\hat\alpha_t|X_t = x]$.
	% Thus, by Jensen's inequality,
	% \begin{align*}
	% 	\Phi^\varphi(x)- \Ic_\ell(\mu|\nu_0) & = \E^{\P^{
	% 	\hat \alpha}} \bigg[ \frac12\int_0^T  \|\hat\alpha_t\|^2 \d t  +  Q_c \varphi(X_T) \bigg]\\ 
	% 	&\le  \E^{\P^{
	% 	\hat \alpha}} \bigg[ \frac12\int_0^T  \|\eta(t,X_t)\|^2 \d t  +  Q_c \varphi(X_T) \bigg].
	% \end{align*}
	% Thus, $\eta(t, \hat X_t, \hat\P\circ \hat X_t^{-1})$ is optimal.
	% (Warning: Here, we have changed probability space. The reason we cannot work on the initial one is that we do not know that the MkV SDE has a strong solution with drift $\eta + b$. (I will come back to fix this)).
	%  XXX There is probably a reference to claim that the optimal control is Markovian. Let's cite something.
\end{proof}

\begin{proposition}
\label{prop:optimal.control.Markovian.quadratic}
	Let {\rm \Cref{assump.data}} be satisfied and put $\ell(x) = \frac{(x - 1)^2}{2}$ for $x \ge 0$ and $\ell(x)=+\infty$ for $x<0$.
	Assume that $Q_c\varphi$ and $\partial_x U$ are continuously differentiable with bounded derivatives. % and $\xi\sigma^\top\sigma\xi\ge c|\xi|^2 $ for some $c>0$.
	Then for any $\varphi\in B_{b,p}(\R^\xdim)$ the problem \eqref{eq:cont.problem} admits a Markovian optimal control $\alpha^\star$ satisfying
%	Moreover, if $\ell(x) =  x\log(x) -x +1$, then this optimal control can be taken Markovian and satisfies 
	\begin{equation}
	\label{eq:optim.con.chisquare}
		\alpha_t^\star =  -\frac{ \partial_xv(t,X_t)}{Z_t}, \, \d  \P\otimes \d t\ae~\text{on }[0,T]\times\Omega,
	\end{equation} 
	where $v$ and $\tilde v$ are classical solutions of the respective {\rm PDEs}
	\begin{equation}
	\label{eq:PDE.prop.quadratic1}
	\begin{cases}
	 	\partial_t v(t,x) - \frac12\partial_xv(t,x)\partial_x U(x)  +\frac12\partial_{xx}v(t,x)  -\frac12| \partial_x\tilde v(t,x)|^2 = 0,\, (t,x)\in [0,T)\times\R^\xdim\\ 
	 	 v(T,x) = 0,
	 	\end{cases}
 	 \end{equation} 
 	 and 
 	 \begin{equation}
	\label{eq:PDE.prop.quadratic2}
	\begin{cases}
	 	\partial_t \tilde v(t,x) - \frac12\partial_x\tilde v(t,x)\partial_x U(x)  +\frac12\partial_{xx}\tilde v(t,x)   = 0,\, (t,x)\in [0,T)\times\R^\xdim\\ 
	 	 v(T,x) = Q_c\varphi(x).
	 	\end{cases}
 	 \end{equation} 
\end{proposition}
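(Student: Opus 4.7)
The proof would follow the strategy of Proposition~\ref{prop:optimal.control.Markovian.entropy}, now specialized to $\ell(x)=(x-1)^2/2$, for which $\ell''\equiv1$. The plan is to recast \eqref{eq:cont.problem} as a classical controlled-diffusion problem on the enlarged state $(X,Z)$, guess a separable ansatz for the value function, and close the argument by verification. Setting $\nu_t:=Z_t\alpha_t$ so that $dZ_t=\nu_t\cdot dW_t$ and $Z_0=1$, the change-of-measure identity $\E^{\P^\alpha}[\cdot]=\E^\P[Z_T\,\cdot\,]$ together with Fubini rewrites the cost as
\[
\E^{\P^\alpha}\Big[\frac12\int_0^T Z_t|\alpha_t|^2 dt+Q_c\varphi(X_T)\Big]=\E^\P\Big[\frac12\int_0^T|\nu_t|^2 dt+Z_T Q_c\varphi(X_T)\Big],
\]
a quadratic-cost Markovian control problem under $\P$ with terminal reward linear in $z$.

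Next, I would write the associated HJB equation for $V(t,x,z)$,
\[
\partial_t V-\frac12\partial_x U(x)\cdot\partial_x V+\frac12\Delta_x V+\inf_{\nu\in\R^m}\Big\{\frac12|\nu|^2+\nu\cdot\partial_{xz}V\Big\}=0,\quad V(T,x,z)=z Q_c\varphi(x),
\]
with pointwise minimizer $\nu^\star=-\partial_{xz}V$. Since the terminal data is linear in $z$ and neither $\partial_z V$ nor $\partial_{zz}V$ enters the generator, I would try the ansatz $V(t,x,z)=v(t,x)+z\tilde v(t,x)$. Substituting and separating powers of $z$ yields precisely \eqref{eq:PDE.prop.quadratic2} for $\tilde v$ (linear, terminal $Q_c\varphi$) and \eqref{eq:PDE.prop.quadratic1} for $v$ (terminal $0$, with source $-\frac12|\partial_x\tilde v|^2$). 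The optimal feedback then collapses to $\nu^\star_t=-\partial_x\tilde v(t,X_t)$, i.e.\ $\alpha^\star_t=-\partial_x\tilde v(t,X_t)/Z_t$, which matches the claimed Markovian form.

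Under the assumption that $Q_c\varphi$ and $\partial_x U$ are $C^2$ with bounded derivatives, the linear PDE for $\tilde v$ admits the Feynman--Kac representation $\tilde v(t,x)=\E[Q_c\varphi(X_T)\mid X_t=x]$; classical parabolic theory yields $\tilde v\in C^{1,2}$ with bounded spatial gradient, and the PDE for $v$, with bounded source $-\frac12|\partial_x\tilde v|^2$, then also admits a classical $C^{1,2}$ solution. A standard verification argument---applying It\^o's formula to $V(t,X_t,Z_t)$ under any admissible $\P^\alpha$, and using the bound on $\partial_x\tilde v$ to keep the resulting stochastic integrals true martingales---gives $V(0,X_0,1)\le\E^{\P^\alpha}[\frac12\int_0^T Z_t|\alpha_t|^2 dt+Q_c\varphi(X_T)]$, with equality for the candidate feedback.

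The main obstacle is admissibility of $\alpha^\star$ in the $\chi^2$-problem. The candidate density solves $dZ_t=-\partial_x\tilde v(t,X_t)dW_t$ with bounded integrand, hence is a genuine $\P$-martingale and $\alpha^\star\in\H^2_{\rm loc}$ is automatic; however, since $\ell=+\infty$ on $(-\infty,0)$, any sign change of $Z$ would render $\Ic_\ell(\P^{\alpha^\star}|\P)$ infinite and the verification bound vacuous. Ruling this out requires a dedicated argument, most plausibly a comparison against positivity-preserving suboptimal competitors, or an a priori estimate on $\|\partial_x\tilde v\|_\infty$ relative to $T$ that forces $Z$ to remain in a safe range throughout $[0,T]$.
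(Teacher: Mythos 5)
Your strategy is essentially the paper's: pass to the $(X,Z)$--state control problem under $\P$, make an ansatz affine in $z$, and close by verification. The rewriting $\E^{\P^\alpha}[\tfrac12\int_0^T Z_t|\alpha_t|^2\,\d t+Q_c\varphi(X_T)]=\E^{\P}[\tfrac12\int_0^T|Z_t\alpha_t|^2\,\d t+Z_TQ_c\varphi(X_T)]$ via Fubini and the $\P$--martingale property of $Z$ is exactly what underlies the paper's HJB \eqref{eq:hjb.quadratic} (with running cost $\tfrac12z^2|a|^2$); your change of variable $\nu=Z\alpha$ simply makes this step explicit, which is a useful clarification, not a different route.

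Two points of caution. First, the HJB you write down is \emph{not} the correct one: under $\P$ the generator of $(X,Z)$ with $\d Z=\nu\,\d W$ contains the second-order term $\tfrac12|\nu|^2\partial_{zz}V$ and the cross term $\nu\cdot\partial_{xz}V$ (the latter because $X$ and $Z$ are driven by the same Brownian motion), so the assertion that ``$\partial_{zz}V$ doesn't enter the generator'' is false; the paper's \eqref{eq:hjb.quadratic} correctly includes $\tfrac12z^2|a|^2\partial_{zz}V$, and the pointwise minimizer is $a^\star=-\partial_{xz}V/\bigl(z(1+\partial_{zz}V)\bigr)$, not $-\partial_{xz}V$. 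You are rescued only because the ansatz $V=v+z\tilde v$ forces $\partial_{zz}V\equiv0$ a posteriori, which makes both your incomplete HJB and the full one coincide on the ansatz; a clean write-up should start from the complete HJB and then observe this cancellation. Second, your concluding concern about admissibility (positivity of the candidate $Z$, since $\ell=+\infty$ on $(-\infty,0)$) is legitimate and, in fact, also applies to the paper's proof, which invokes a ``standard verification theorem'' without checking that the feedback $\alpha^\star=-\partial_x\tilde v/Z$ (the statement's $v$ vs.\ $\tilde v$ appears to be a typo given the ansatz used in the proof) produces a density in the class $\Ac(\mu)$ of \Cref{lemma.ref}; there $\frac{\d\P^\alpha}{\d\P}=\mathds{1}_{\{\cdot>0\}}\frac{\d\mu_0}{\d\nu_0}(X_0)Z_T$ with $Z$ a stochastic exponential, which is $\ge0$ by construction but may be absorbed at $0$, whereas the candidate $\d Z=-\partial_x\tilde v\,\d W$ is a true martingale that can change sign. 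This is a real technical gap on which your proposal is more candid than the paper; resolving it would require showing that the optimal $Z$ is absorbed at $0$ rather than crossing it (consistent with the form $\partial_x\ell^*(\cdot)=(1+\cdot)^+$ of the optimal density from \Cref{thm:propertyMain}), but neither you nor the paper supplies such an argument.
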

\begin{remark}
	The optimal control process $\alpha^\star$ is indeed Markovian because $X$ and $Z$ are both state processes.
	The crucial difference is that $\alpha^\star$ does not depend on $Z$ in the entropic case.
	We believe that this is the only case where $\alpha^\star$ does not depend on $Z$.
\end{remark}
\begin{proof}
	The Hamilton-Jacobi-Bellman (HJB) equation associated to the stochastic control problem \eqref{eq:cont.problem} is given by
	\begin{equation}
	\label{eq:hjb.quadratic}
		\begin{cases}
			\partial_t V -\frac12 \partial_x U\partial_xV + \frac12  \partial_{xx}V + \inf_{a \in \R^\xdim}\Big\{ \frac12z^2|a|^2  + \frac12(2  za\partial_{xz}V +  z^2|a|^2 \partial_{zz}V)\Big \} = 0\\ 
			V(T,x,z) = zQ_c\varphi(x).
		\end{cases}
	\end{equation}
	By the standard verification theorem in optimal stochastic control theorem, see, e.g., \cite{fleming2006controlled,touzi2013optimal}, we know that the optimal control is given by $\alpha^\star = \argmin_{a\in \R^\xdim}\Big\{ \frac12Z_t^2|a|^2  + \frac12(2  Z_ta\partial_{xz}V(t,X_t,Z_t) +  Z^2_t|a|^2 \partial_{zz}V(t,X_t,Z_t))\Big \}$ provided that $V$ is a classical solution of the HJB \eqref{eq:hjb.quadratic}.
	Thus, if $V$ is a classical solution, then we have 
	\[
	\alpha^\star_t = - \frac{ \partial_{xz}V(t,X_t, Z_t)}{Z_t(1 + \partial_{zz}V(t,X_t,Z_t))},\,  \d  \P\otimes \d t\ae~\text{on }[0,T]\times\Omega.
	\]
	To solve the HJB equation, we make the ansatz $V(t,x,z) = v(t,x) + z \tilde v(t,x)$ with $z>0$ and $(t,x)\in [0,T]\times \R^\xdim$.
	It then follows that $V$ solves the HJB equation if $v$ and $\tilde v$ satisfy \eqref{eq:PDE.prop.quadratic1} and \eqref{eq:PDE.prop.quadratic1}.
	It follows from \citet[Theorems 1.7.12 and 2.4.10]{friedman1964partial} that equation \eqref{eq:PDE.prop.quadratic2} admits a unique classical solution $\tilde v$.
	This solution has Lipschitz derivative.
	In fact, it follows by Feynman-Kac formula that $\tilde v(t,x) = \E[Q_c\varphi(X^{t,x}_T)]$ where $X^{t,x}$ solves the SDE \eqref{eq.dyn.P} with $X_t=x$.
	Thus, we have that $\partial_x\tilde v(t,x) = \E[\partial_xQ_c(X^{t,x}_T)\nabla X^{t,x}_T]$ where $\nabla X^{t,x}$ is the variational process satisfying $\d \nabla X^{t,x}_s = -\frac12\partial_x U( X^{t,x}_s)\nabla X^{t,x}_s\d s$.
	Since $\partial_x U$ is Lipschitz continuous and $\partial_xQ_c\varphi$ is bounded, it is direct to check that $\partial_xv$ is Lipschitz.
	Then applying \citet[Theorems 1.7.12 and 2.4.10]{friedman1964partial}  again, \eqref{eq:PDE.prop.quadratic2} also admits a unique classical solution.
	This shows that $V$ solves the HJB equation and therefore that an optimal control is given by $\alpha_t =  -\frac{ \partial_xv(t,X_t)}{Z_t}$.
\end{proof}
\begin{lemma}
\label{lem:phi.optimal=primal.optimal}
	Let {\rm Assumptions \ref{assump.data}} and {\rm \ref{assump.data.2}} be satisfied.
	%Assume that $\muin = \nu_0$. {\color{blue}add in the assumptions on $\ell$ that $\ell(1) = 0$. This is fine wlog because we can shift to $\bar\ell(x) = \ell(x) - \ell(1)$.}
	Assume that $\varphi^\star \in B_{b,p}(\R^\xdim)$ is a dual optimizer for $V_c(\muin,\mufin)$ and $\alpha^\star$ is optimal for $\Psi^{\varphi^\star}(\muin)$.
	% Further assume that one of the following conditions hold:
	% \begin{itemize}
	% 	\item[$(a)$] $\muin = \nu_0$
	% 	\item[$(b)$] $\Ic_\ell$ is superadditive.
	% \end{itemize}
	Then the measure $\P^{\alpha^\star}$ 
	% \begin{equation*}
	% 	\frac{d\bar\P}{d\P} = %\int_{\R^\xdim}
	% 	\frac{d\P^{\hat\alpha}}{d\P}%\muin(dx)
	% \end{equation*}
	is the primal optimizer for $V_c(\muin,\mufin)$.
	In particular, $\P^{\alpha^\star} = \Q^\star$.
\end{lemma}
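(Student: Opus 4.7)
The plan is to use strong duality to exhibit the primal optimizer $\Q^\star$ as another minimizer of the penalized control problem whose value defines $\Psi^{\varphi^\star}(\muin)$, and then invoke strict convexity to identify $\Q^\star$ with $\P^{\alpha^\star}$.

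First, I would rewrite the hypothesized optimality of $\P^{\alpha^\star}$ via \Cref{lem:Phi=oce}, which recasts the value $\Psi^{\varphi^\star}(\muin)$ as an infimum over $\Ac(\muin)$, and in particular yields
\begin{equation*}
	\Psi^{\varphi^\star}(\muin) = \Ic_\ell(\P^{\alpha^\star}|\P) + \E^{\P^{\alpha^\star}}\!\big[Q_c\varphi^\star(X_T) + C(X)\big].
\end{equation*}
I would then combine this with the strong-duality identity $V_c(\muin,\mufin) = \Psi^{\varphi^\star}(\muin) - \langle \varphi^\star, \mufin\rangle$ coming from \Cref{thm:exists.duality}$.(ii)$ applied at the optimizer $\varphi^\star$.

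Next, I would bring in the unique primal minimizer $\Q^\star \in \Pc_c(\muin,\mufin)$ supplied by \Cref{thm:exists.duality}$.(i)$. Since $\Ic_\ell(\Q^\star|\P) < \infty$, \Cref{lemma.ref} places $\Q^\star \in \Ac(\muin)$, so $\Q^\star$ competes in the infimum defining $\Psi^{\varphi^\star}(\muin)$; meanwhile, the feasibility $\Wc_c(\Q^\star \circ X_T^{-1}, \mufin) = 0$ combined with the WOT duality \eqref{eq.dualitywot} gives $\E^{\Q^\star}[Q_c\varphi^\star(X_T)] \leq \langle \varphi^\star, \mufin\rangle$. Chaining these two bounds produces
\begin{equation*}
	\Psi^{\varphi^\star}(\muin) - \langle \varphi^\star, \mufin\rangle \leq \Ic_\ell(\Q^\star|\P) + \E^{\Q^\star}[C(X)] = V_c(\muin,\mufin),
\end{equation*}
which by strong duality is an equality. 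Both one-sided inequalities must therefore saturate, and in particular $\Q^\star$ also attains the infimum $\Psi^{\varphi^\star}(\muin)$.

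To conclude that $\P^{\alpha^\star} = \Q^\star$, I would invoke uniqueness of the minimizer of the penalized control problem: the map $\Q \longmapsto \Ic_\ell(\Q|\P)$ is strictly convex on its effective domain by the strict convexity of $\ell$ in \Cref{assump.data}$.(i)$, while $\Q \longmapsto \E^\Q[Q_c\varphi^\star(X_T) + C(X)]$ is linear, so the minimizer of $\Psi^{\varphi^\star}(\muin)$ is unique. The main subtlety is the pinching argument itself: one must check that under strong duality both the WOT-duality bound and the admissibility of $\Q^\star$ in $\Ac(\muin)$ must saturate \emph{simultaneously}, which neatly identifies $\Q^\star$ as a minimizer of $\Psi^{\varphi^\star}(\muin)$ without any additional integrability hypothesis on $\ell^\ast$ beyond what is already furnished by the assumed existence of the dual optimizer.
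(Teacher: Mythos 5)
Your proof is correct, and the route is genuinely different from the paper's. The paper derives the explicit density of $\P^{\alpha^\star}_x$ with respect to $\P_x$: from optimality of $\alpha^\star$ it uses either \Cref{cor.chain} (case $\muin=\nu_0$) or superadditivity (case \Cref{assump.data.2}$.(ii)$) to pass to a fibrewise equality, applies the uniqueness of the maximizer in \Cref{prop:dual.f.div} to conclude $\frac{\d\P^{\alpha^\star}_x}{\d\P_x} = \partial_x\ell^\ast(-Q_c\varphi^\star(X_T) - C(X) - \psi^\star_x)$, and then matches this against the density of $\Q^\star$ established in \Cref{thm:propertyMain}. You instead show directly that $\Q^\star$ itself minimizes the penalized control problem, via a chain of inequalities (strong duality combined with the one-sided WOT bound from \eqref{eq.dualitywot}) that strong duality forces to saturate, and then conclude $\P^{\alpha^\star}=\Q^\star$ from strict convexity of $\Ic_\ell(\cdot|\P)$ on the convex set $\{\Q\in\Pc(\muin): \Ic_\ell(\Q|\P)<\infty\}$, linearity of $\Q\mapsto\E^\Q[Q_c\varphi^\star(X_T)+C(X)]$, and \Cref{lemma.ref}$.(i)$, which places $\Q^\star$ in $\Ac(\muin)$. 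The trade-off is this: the paper's route also produces the density formula for $\P^{\alpha^\star}$ along the way, which is why it needs the tensorization/superadditivity machinery and the integrability hypothesis feeding \Cref{prop:dual.f.div}; your sandwich argument bypasses the density formula entirely and, as a side effect, never actually invokes \Cref{assump.data.2} or \Cref{thm:propertyMain} --- the ingredients \Cref{thm:exists.duality}, \Cref{lem:Phi=oce}, \Cref{lemma.ref}, \eqref{eq.dualitywot} and strict convexity are all available under \Cref{assump.data} alone, given existence of a dual optimizer. This is arguably the cleaner argument for this particular lemma.
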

%{\color{blue}I will check if we can remove the assumption $\muin=\nu_0$ by defining $d\P^\alpha = 1_{\frac{d\P^\alpha}{d\P}>0}Z_Td\P$}
\begin{proof}
%	XXX we need to discuss measurability of $x\mapsto \hat\alpha$ and $x\mapsto \frac{d\P^{\hat\alpha}}{d\P}$.
Let us first assume that $(i)$ in Assumption \ref{assump.data.2} holds.	
	If $\varphi^\star \in C_{b,p}(\R^\xdim)$ is dual optimal for $V_c(\muin,\mufin)$ and $\alpha^\star$ is optimal for $\Psi^{\varphi^\star}(\muin) - \Ic_\ell(\muin|\nu_0)$, then subsequently using Corollary \ref{cor.chain} and the desintegration theorem, optimality of $\alpha^\star$, definition of $\Psi^{\varphi^\star}(\muin)$ and \Cref{prop:dual.f.div}, we have
	\begin{align*}
		\int_{\R^\xdim}\Big( \Ic_\ell(\P^{\alpha^\star}_x|\P_x)& + \E^{\P^{\alpha^\star}_x}[Q_c \varphi^\star(X_T)]\Big)\mu_0(dx)  =
		 \Ic_\ell(\P^{\alpha^\star}|\P) + \E^{\P^{\alpha^\star}}[Q_c \varphi^\star(X_T)]\\ 
		  &= \Psi^{\varphi^\star}(\muin)    
			 = -\int_{\R^m}\Phi_{\P_x}(-Q_c \varphi^\star(X_T))\muin(dx)\\ 
			& = -\int_{\R^m}\sup_{\Q\ll \P_x}\big(   \E^{\Q}[-Q_c\varphi^\star(X_T)] - \Ic_\ell(\Q|\P^x)\big)\muin(dx).
	\end{align*}
%	where the latter equality follows from Lemma \ref{lemma.ref}.
%	Thus, we have  $ \mathcal{OCE}^{\P}(-Q_c\bar\varphi(X_T)) = \E^{\P^{\hat\alpha}}[-Q_c\varphi(X_T)] - \Ic_\ell(\P^{\hat\alpha}|\P) -\Ic(\muin|\nu_0) $.
%	Since $\muin=\nu_0$, we have $\Ic(\muin|\nu_0) = 0$ and by Proposition \ref{prop:dual.f.div}, 
If we instead have $(ii)$ in Assumption \ref{assump.data.2}, then by superadditivity of $\Ic_\ell$ and arguing as above 
\begin{align*}
	\int_{\R^\xdim}\Big( \Ic_\ell(\P^{\alpha^\star}_x|\P_x) + \E^{\P^{\alpha^\star}_x}[Q_c \varphi^\star(X_T)]\Big)\mu_0(dx) & \le 
		 \Ic_\ell(\P^{\alpha^\star}|\P) + \E^{\P^{\alpha^\star}}[Q_c \varphi^\star(X_T)]\\ 
%		  &= \Psi^{\varphi^\star}(\muin)  \\  
%			& = -\int_{\R^m}\Phi_{\P_x}(-Q_c \varphi^\star(X_T))\muin(dx)\\ 
			& = -\int_{\R^m}\sup_{\Q\ll \P_x}\big(   \E^{\Q}[-Q_c\varphi^\star(X_T)] - \Ic_\ell(\Q|\P^x)\big)\muin(dx).
	\end{align*}
	Since $\P^{\alpha^\star}_x\ll \P_x$, this yields
	\begin{equation*}
		\int_{\R^\xdim}\Big( \Ic_\ell(\P^{\alpha^\star}_x|\P_x) + \E^{\P^{\alpha^\star}_x}[Q_c \varphi^\star(X_T)]\Big)\mu_0(dx) = -\int_{\R^m}\sup_{\Q\ll \P_x}\big(   \E^{\Q}[-Q_c\varphi^\star(X_T)] - \Ic_\ell(\Q|\P^x)\big)\muin(dx).
	\end{equation*}
This implies that $\Ic_\ell(\P^{\alpha^\star}_x|\P_x) + \E^{\P^{\alpha^\star}_x}[Q_c \varphi^\star(X_T)] = -\sup_{\Q\ll \P_x}\big(   \E^{\Q}[-Q_c\varphi^\star(X_T)] - \Ic_\ell(\Q|\P^x)\big), \muin\ae$
	Thus, it follows by uniqueness of the optimizer in \Cref{prop:dual.f.div} that $\frac{\d\P^{\hat\alpha}_x}{\d \P_x} = \partial_x \ell^*\big(-Q_c\varphi^\star(X_T) -  \phi^\star_x\big), \, \P^\star_x\as$ for $\muin\ae\, x\in \R^\xdim$ with $\phi^\star_x$ as in \eqref{eq.thm.i.aux.4}.
Arguing as in the proof of {\rm \Cref{thm:propertyMain}}, this shows that $\P^{\alpha^\star} = \Q^\star$, which concludes the proof.
%{\color{orange}here again we need the chain rule to conclude. Otherwise I am not sure. It is the same issue we need to discuss above} Thus, by \Cref{prop:dual.f.div} the measure $\P^{\hat\alpha}_x$ is given by  $\d\P^{\hat\alpha}_x = \partial \ell^*\big(-Q_c\hat\varphi(X_T) -  r^\star_x\big)\d\P_x$, so that the result follows from Theorem \ref{thm:propertyMain}.$(i)$.
\end{proof}

\subsubsection{Time reversal of diffusions and proofs of the characterizations of the marginal distribution}

Recall the time reversal mapping $\overleftarrow{\Tc}$ defined in the introduction.
Observe in particular that with respect to the canonical process $X$ we have
%Given a probability measure $\P$ on $C([0,T];\R^\xdim)$, we consider its image under pathwise time reversal $\hat \P:= \P\circ \overleftarrow f^{-1}$, %\camilo{That's Follmer's notation for $\hat\P(A)=\P(f^{-1}(A))$} 
\[
	X_t(\overleftarrow \Tc(\omega)):=X_{T-t}(\omega)=\omega_{T-t}.
\]
%Given a probability measure $\Q$ on $\Omega$, we consider its image under pathwise time reversal: $\overleftarrow{ \Q}:= \Q\circ \overleftarrow \Tc$. %\camilo{That's Follmer's notation for $\hat\P(A)=\P(f^{-1}(A))$} 
Recall that a probability measure is said to be reversible if $\overleftarrow{\Q} = \Q$.
It is a classical result, see \citet[Proposition 4.5]{pavliotis2014stochastic}, that whenever the drift $b$ is Markovian and of gradient form, i.e., $b(t,x) = -\partial_x U(x(t))/2$, $(t,x)\in [0,T]\times C([0,T],\R^\xdim)$, for some continuously differentiable function $U:\R^\xdim\longrightarrow \R$, the SDE \eqref{eq.dyn.P} admits an invariant measure $\varlambda$ and taking $\nu_0= \varlambda$ it follows that $\P$, the weak solution to \eqref{eq.dyn.P}, is reversible.

% In what follows, we will use the notation $\P_{\bar\varphi}$ for the optimal primal measure  to stress its dependence in the optimal dual potential $\bar\varphi$.
\begin{proposition}
\label{prop:charactirization.final}
	Let {\rm \Cref{assump.data}} be satisfied, $\nu_0=\varlambda$, and assume that $c$ is such that $\Wc_c(\mu,\nu)=0$ if and only if $\mu = \nu$ and $b(t,x) = -\partial_x U(x(t))/2$, $(t,x)\in [0,T]\times C([0,T],\R^\xdim)$, for some continuously differentiable function $U:\R^\xdim\longrightarrow \R$.
	Let $\Phi^\varphi$ be given by \eqref{eq:defPsi}.
	Then the following hold:
	\begin{enumerate}[label=$(\roman*)$, ref=.$(\roman*)$,wide,  labelindent=0pt]
	\item The optimal transport problem with value $V_c(\mufin, \muin)$ admits the convex dual representation
	\begin{equation}
	\label{eq:V.dual.sign.invers}
		V_c(\mufin,\muin) = \sup_{\varphi\in C_{b,p}(\R^\xdim)}\big(  \Psi^\varphi(\mufin) + \langle \varphi,\muin\rangle\big).
	\end{equation}
	\item If $\Pc_c(\mufin,\muin) \neq \emptyset$, then $V_c(\mufin,\muin)$ admits a unique primal optimizer $\hat \Q$ and it holds $\hat \Q = \overleftarrow{\Q^\star}$.
	\item Assume that $V_c(\muin,\mufin)$ admits a dual optimizer $\varphi^\star\in C_{b,p}(\R^\xdim)$
	 and $V_c(\mufin,\muin)$ admits a dual optimizer
	 $\overleftarrow\varphi \in C_{b,p}(\R^\xdim)$.
	 Then there are two progressively measurable processes  $\alpha^\star$ and $\hat{\alpha}$ such that
	\begin{equation}
	\label{eq:non-markovian.charac}
		\E^{\Q^{\star}}\Big[ \alpha^\star_t  + \hat{\alpha}_{T-t} \circ \overleftarrow{\Tc} |X_t = x \Big] - \partial_x U(x)  = \nabla \log(\Q^{\star}_t(x)),\;  \d t\otimes \d\Q_t^\star\ae~\text{on } [0,T]\times\R^\xdim,
	\end{equation}
	where $\Q^\star_t$ is the probability density function of $\Q^\star\circ X_t^{-1}$, and {where the derivative is in the sense of distributions}.
	In particular, $\alpha^\star$ and $\hat\alpha$ are the optimal controls of the stochastic optimal control problem with value $\Psi^{\phi^\star}(\muin) - \Ic_\ell(\muin|\nu)$ and $\Psi^{\overleftarrow{\varphi}}(\mufin) - \Ic_\ell(\mufin|\nu_0)$, respectively.
\end{enumerate}
 \end{proposition}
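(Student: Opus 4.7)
The proof naturally splits along the three items of the statement. For \textbf{(i)}, the plan is to apply Theorem \ref{thm:exists.duality}$(ii)$ directly to the transport problem with $\mufin$ as initial law and $\muin$ as terminal law; all standing hypotheses transfer verbatim, and the sign convention in the displayed formula is obtained (up to relabeling) by the reparametrization $\varphi\mapsto-\varphi$ on $C_{b,p}(\R^\xdim)$. For \textbf{(ii)}, I would construct $\overleftarrow{\Q^\star}$ as a candidate optimizer for $V_c(\mufin,\muin)$. Since $\Wc_c(\mu,\nu)=0$ forces $\mu=\nu$, the constraint $\Wc_c(\Q^\star\circ X_T^{-1},\mufin)=0$ gives $\Q^\star\circ X_T^{-1}=\mufin$, so $\overleftarrow{\Q^\star}\in\Pc_c(\mufin,\muin)$. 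Reversibility of $\P$ (which uses $\nu_0=\varlambda$) yields $\d\overleftarrow{\Q^\star}/\d\P = \d\overleftarrow{\Q^\star}/\d\overleftarrow{\P} = (\d\Q^\star/\d\P)\circ\overleftarrow{\Tc}^{-1}$, and invariance of $\E^{\P}[\ell(\cdot)]$ under time reversal gives $\Ic_\ell(\overleftarrow{\Q^\star}|\P)=\Ic_\ell(\Q^\star|\P)$. With $C=0$ this implies $V_c(\mufin,\muin)\le V_c(\muin,\mufin)$, and the reverse inequality follows symmetrically from $\overleftarrow{\hat\Q}\in\Pc_c(\muin,\mufin)$. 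Uniqueness in Theorem \ref{thm:exists.duality}$(i)$ then forces $\hat\Q=\overleftarrow{\Q^\star}$.

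For \textbf{(iii)}, the plan is to invoke Lemma \ref{lem:phi.optimal=primal.optimal} twice: once for $V_c(\muin,\mufin)$ with dual optimizer $\varphi^\star$ to obtain $\Q^\star=\P^{\alpha^\star}$ for an optimal control $\alpha^\star$ of \eqref{eq:cont.problem}, and once for $V_c(\mufin,\muin)$ with dual optimizer $\overleftarrow{\varphi}$ to obtain $\hat\Q=\P^{\hat\alpha}$ for an analogous $\hat\alpha$. By Girsanov, under $\Q^\star$ the canonical process solves $\d X_t=(-\tfrac12\partial_xU(X_t)+\alpha_t^\star)\d t+\d\tilde W_t$, and under $\hat\Q$ it solves $\d X_t=(-\tfrac12\partial_xU(X_t)+\hat\alpha_t)\d t+\d\tilde W_t'$. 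The heart of the argument is then the Haussmann--Pardoux time-reversal formula applied to the $\Q^\star$-diffusion: the reversed process $Y_t:=X_{T-t}$, still under $\Q^\star$, is a semimartingale whose drift, projected onto $\sigma(Y_t)$, equals $\tfrac12\partial_xU(Y_t)-\E^{\Q^\star}[\alpha^\star_{T-t}\mid X_{T-t}=Y_t]+\nabla\log\Q^\star_{T-t}(Y_t)$. By part $(ii)$, the law of $Y$ under $\Q^\star$ coincides with the law of $X$ under $\hat\Q$, so the respective Markovian projections of their drifts must agree. Equating them, and using the change-of-variable identity $\E^{\hat\Q}[\hat\alpha_t\mid X_t=x]=\E^{\Q^\star}[\hat\alpha_t\circ\overleftarrow{\Tc}\mid X_{T-t}=x]$ that follows from $\hat\Q=\Q^\star\circ\overleftarrow{\Tc}^{-1}$ together with $X_{T-t}\circ\overleftarrow{\Tc}=X_t$, the substitution $s=T-t$ and cancellation of $\tfrac12\partial_xU$ from both sides produces \eqref{eq:non-markovian.charac}.

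The principal obstacle lies in step $(c)$, the rigorous invocation of Haussmann--Pardoux. The formula requires enough integrability of the drift $\alpha^\star$ and the existence and local integrability of $\nabla\log\Q^\star_t$ as a function of $t$ and $x$. The density of $\Q^\star_t$ relative to Lebesgue measure is inherited from non-degeneracy of $\P$ together with the explicit density from Theorem \ref{thm:propertyMain}, but verifying Hausmann--Pardoux's integrability hypotheses for general $\ell$ and for the Markovian projections $\E^{\Q^\star}[\alpha^\star_t\mid X_t]$ will likely require a localization argument combined with the fact that $\alpha^\star\in\H^2_{\mathrm{loc}}$ only. A second, more subtle point is that the drift-matching step is valid only after projection onto $\sigma(X_t)$, so one must invoke uniqueness of the Markovian projection of the semimartingale decomposition rather than pathwise identification; in the entropic and $\chi^2$ cases of Corollaries \ref{cor:rel.ent.cha} and \ref{cor:quad.cha} this projection is computable in closed form via the HJB equations from Propositions \ref{prop:optimal.control.Markovian.entropy} and \ref{prop:optimal.control.Markovian.quadratic}, which explains why explicit descriptions of the flow are available in those cases.
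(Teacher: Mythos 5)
Your overall plan tracks the paper's proof closely. Part (ii) uses the same ideas: the time-reversal invariance $\Ic_\ell(\Q|\P)=\Ic_\ell(\overleftarrow{\Q}|\overleftarrow{\P})$ (the paper makes this precise via Lemma \ref{lem:time.reversed.divergence}), the equivalence $\Q\in\Pc_c(\mufin,\muin)\Longleftrightarrow\overleftarrow{\Q}\in\Pc_c(\muin,\mufin)$ which relies on the hypothesis that $\Wc_c$ vanishes iff the arguments agree, reversibility of $\P$ from the gradient-drift assumption, and uniqueness from Theorem \ref{thm:exists.duality}$(i)$. Part (iii) also follows the paper's route: invoke Lemma \ref{lem:phi.optimal=primal.optimal} twice to identify $\Q^\star=\P^{\alpha^\star}$ and $\hat\Q=\P^{\hat\alpha}$, write the two Girsanov SDEs, and apply a time-reversal theorem for diffusions — the paper cites F\"ollmer's Theorem 3.10, you cite Haussmann--Pardoux, but both deliver the Markovian projection of the reversed drift. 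Your drift-matching and $s=T-t$ substitution is exactly how the paper passes from the reversal formula to \eqref{eq:non-markovian.charac}, and the caveats you raise about regularity of $\nabla\log\Q^\star_t$ and localization for $\alpha^\star\in\H^2_{\rm loc}$ are legitimate but are the same implicit assumptions the paper's citation carries.

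The one place where your reasoning goes astray is part (i). You assert that the $+$ sign in \eqref{eq:V.dual.sign.invers} results from the reparametrization $\varphi\mapsto-\varphi$. This does not work: $C_{b,p}(\R^\xdim)$ is not stable under $\varphi\mapsto-\varphi$ (it demands a uniform lower bound but only one-sided polynomial growth), and, more seriously, $\Psi^{-\varphi}$ bears no simple relation to $\Psi^\varphi$, since $Q_c(-\varphi)(x)=\inf_{\rho}\big(c(x,\rho)-\langle\varphi,\rho\rangle\big)$ is in general not $-Q_c\varphi(x)$. Applying Theorem \ref{thm:exists.duality}$(ii)$ with the marginals swapped yields $V_c(\mufin,\muin)=\sup_{\varphi}\big(\Psi^\varphi(\mufin)-\langle\varphi,\muin\rangle\big)$; the paper's own proof of (i) is a bare appeal to that theorem with no further argument. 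The $+$ in \eqref{eq:V.dual.sign.invers} is almost certainly a typographical slip (just as $\hat\alpha_{1-t}$ in \eqref{eq:non-markovian.charac} should read $\hat\alpha_{T-t}$), so the sign-flip attempt should be dropped in favor of simply applying Theorem \ref{thm:exists.duality}$(ii)$ with the roles of $\muin$ and $\mufin$ exchanged.
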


\begin{proof}
	{\rm $(i)$}: The representation \eqref{eq:V.dual.sign.invers} follows from the convex duality \Cref{thm:exists.duality}{\rm (ii)}.

	\medskip

	{\rm $(ii)$}: If $\Pc_c(\mufin,\muin)\neq \emptyset$, then the existence of a unique primal optimizer $\hat\Q$ follows by \Cref{thm:exists.duality}{\rm (i)}.

	Since $\frac{\d  \Q}{\d \P} $ is clearly a measurable function of $ \overleftarrow \Tc$ we obtain by \Cref{lem:time.reversed.divergence} that
	$\Ic_\ell(\Q |\P )= \Ic_\ell(\overleftarrow \Q|\overleftarrow \P)$.
	In addition, using that $\Wc_c(\mu,\nu) = 0$ if and only if $\mu = \nu$, it is directly checked that $\Q\in \Pc_c(\mufin,\muin)$ if and only if $\overleftarrow{\Q} \in \Pc_c(\muin, \mufin)$.
	Furthermore, since the drift is a time-independent gradient, it follows, {\color{black}e.g., by \cite[Proposition 4.5.]{pavliotis2014stochastic}} that $\P$ is reversible.
	This allows us to get
	\begin{align*}
		\Ic_\ell(\overleftarrow{\Q^\star}|\P) &\ge V_c(\mufin,\muin) = \inf_{\Q\in \Pc_c(\mufin,\muin)}\Ic_\ell(\Q|\P)=\inf_{\Q\in \Pc_c(\mufin,\muin)}\Ic_\ell(\overleftarrow{\Q}|\overleftarrow{\P})\\ 
		& = \inf_{\Q\in \Pc_c(\muin,\mufin)}\Ic_\ell(\Q|\P) = \Ic_\ell(\Q^\star|\P) = \Ic_\ell(\overleftarrow{\Q^\star}|\overleftarrow{\P})  = \Ic_\ell(\overleftarrow{\Q^\star}|\P),
	\end{align*}
	showing by uniqueness that $\hat\Q = \overleftarrow{\Q^\star}$.
\medskip

	{\rm $(iii)$}
	We have already argued (just before Proposition \ref{prop:optimal.control.Markovian.entropy}) that there exists a control process $\alpha^*$ that is optimal for $\Psi^{\varphi^\star}(\muin) - \Ic_\ell(\muin|\nu_0)$. 
	And arguing the same way but replacing $Q_c\varphi$ by $\overleftarrow{\varphi} \in C_{b,p}(\R^d)$, it follows that $\Psi^{\overleftarrow{\varphi}}(\mufin) - \Ic_\ell(\mufin|\nu_0)$ admits an optimal control $\hat\alpha$.
	Using \Cref{lem:phi.optimal=primal.optimal} now allows to get that $\P^{\alpha^{\!\star}} = \Q^\star$ and $\P^{\hat{\alpha}} = \hat \Q = \overleftarrow{\Q^\star}$.
	Thus, $\P^{\hat{\alpha}} =\overleftarrow{\P^{\alpha^{\!\star}}} $.
	By Girsanov's theorem the canonical process $X$ hence satisfies
	\begin{equation}
	\label{eq:proof.reverse.1}
	\begin{cases}
		\d X_t = \alpha_t^\star -\frac12 \partial_x U( X_t)dt + \d W_t^{\alpha^\star}, \, \Q^\star\as,  \text{ with } \Q^\star\circ X_0^{-1} = \muin\\ 
		\d Z^{\alpha^\star}_t = |\alpha^\star_t|^2Z^{\alpha^\star}_tdt + \alpha^\star_tZ^{\alpha^\star}_tdW^{\alpha^\star}_t, \, \Q^\star \as 
	\end{cases}
	\end{equation}
	and
	\begin{equation}
	\label{eq:proof.reverse.2}
		\begin{cases}
			\d X_t = \hat{\alpha}_t -\frac12\partial_x U( X_t)dt + \d W_t^{\hat{\alpha}}, \,  \overleftarrow{\Q^\star}\as , \text{ with } \overleftarrow{\Q^\star}\circ X_0^{-1} = \mufin
	\\ 
	\d Z^{\hat \alpha}_t = |\hat \alpha_t|^2Z^{\hat \alpha}_tdt + \hat \alpha_tZ^{\hat \alpha}_tdW^{\hat \alpha}_t , \,  \overleftarrow{\Q^\star}\as
\end{cases}
\end{equation}
where $W^{\alpha^\star}$ and $W^{\hat{\alpha}}$ are $\Q^\star-$ and $\overleftarrow{\Q^\star}-$Brownian motions, respectively.
%{\color{orange}here be more careful, please. In particular, I think there is a finite entropy condition to be checked and I couldn't totally check whether we need $W^{\overleftarrow{\alpha}} = \overleftarrow{W^{\alpha^\star}}$}
Thus, by \cite[Theorem 3.10]{follmer1986time}, the law $\Q^\star_t := \Q^\star\circ X_t^{-1}$ is absolutely continuous and its density $\Q^*_t(\cdot)$ satisfies
\begin{equation*}
	\E^{\Q^{\star}}\Big[ \alpha^\star_t -\frac12\partial_x  U( X_t) + \{\hat{\alpha}_{T-t} -\frac12\partial_x U(X_{T-t})\}\circ \overleftarrow{\Tc}  |X_t = x \Big] = \nabla  \log(\Q^{\star}_t(x)),\;  \d t\otimes \d\Q_t^\star\ae~\text{on } [0,T]\times\R^\xdim,
\end{equation*}
%for almost all $t$ and for $\Q^\star_t$-almost all $x$.
%where $\Q^\star_t$ is the probability density function of $\Q^\star\circ X_t^{-1}$.
Therefore, we obtain \eqref{eq:non-markovian.charac}.
%{\color{blue}do we have a link between $\varphi^\star$ and $\overleftarrow{\varphi}$?}
\end{proof}
%{\color{magenta} Should we update all the equations above and below, given the new form of $b=-\nabla U$?}

\begin{proof}[Proofs of {\rm \Cref{them:charac.marginal}} and {\rm\Cref{cor:rel.ent.cha}}]
	To conclude the proof of Theorem \ref{them:charac.marginal}, observe that \eqref{eq:non-markovian.charac.statement} was derived in \Cref{prop:charactirization.final}.

%{\color{orange}I don't think we can get the above desired result. In fact, best case scenario (not even sure) we could show that $\alpha^\star = u^\star_t(X_t, Z_t^{\alpha^\star})$, but I find it hard to obtain that $Z_t^{\alpha^\star} = f(X_t)$. So I will state the result in the non-markovian case only.}
\medskip

Let $\ell(x) = x\log(x) -x + 1$, then it follows by \Cref{prop:optimal.control.Markovian.entropy} that $\alpha_t^\star = f_t(X_t)$ and $\hat\alpha_t = g_t(X_t)$ for two Borel measurable functions.
In particular, $\alpha^\star$ and $\hat\alpha$ do not depend on $Z^{\alpha^*}$ or $Z^{\hat\alpha}$.
Plugging this in \Cref{eq:non-markovian.charac.statement} directly yields \Cref{cor:rel.ent.cha}{\rm (i)}.
Under the additional assumptions in \Cref{cor:rel.ent.cha}{\rm (ii)},
% If $\ell(x) = x\log(x) -x + 1$, and the functions $b$, $Q_c\varphi$ and $\overleftarrow{\varphi}$ are continuously differentiable, then
it follows by \Cref{prop:optimal.control.Markovian.entropy} that the optimal control $\alpha^\star$ is given by  $\alpha^\star_t = \partial_xv(t,X_t)$ where $v$ is the classical solution of the PDE \eqref{eq:PDE.prop}. 
Arguing exactly as in \Cref{prop:optimal.control.Markovian.entropy} with $Q_c\varphi$ therein replaced by $\overleftarrow{\varphi}$, we have again that $\hat\alpha_t = \partial_x\overleftarrow{v}(t,X_t)$ where $\overleftarrow{v}$ satisfifes the PDE \eqref{eq:PDE.prop} but with terminal condition $\overleftarrow{\varphi}$.
	% \begin{equation*}
	% \begin{cases}
	%  	\partial_t v(t,x) + \partial_xv(t,x)b(t,x)  +\frac12\partial_{xx}v(t,x)\sigma\sigma^\top -\frac12|\sigma\partial_xv(t,x)|^2 = 0\\ 
	%  	 v(T,x) = \overleftarrow{\varphi}(x).
	%  	\end{cases}
 	%  \end{equation*}
Plugging these values of $\alpha^\star$ and $\hat\alpha$ in \Cref{eq:non-markovian.charac.statement} yields the result.
 % we have 
% 	\begin{align*}
% 		\partial_x \log(\Q^{\star}_t(x)) & =  \sigma^\top\partial_xv(t,x)  + \sigma^\top\partial_x\overleftarrow{v}(1-t,x)  + b(t,x) + b(1-t,x),
% 	\end{align*}
% 	which is the desired result.
\end{proof}

\begin{proof}[Proof of {\rm \Cref{cor:quad.cha}}]
Let $\ell(x) = (x -1)^2/2$ if $x\ge0$ and $+\infty$ else; then we know by \Cref{prop:optimal.control.Markovian.quadratic} that the optimal control in \eqref{eq:cont.problem} is given by \eqref{eq:optim.con.chisquare}.
Thus, the equation \ref{eq:proof.reverse.1} becomes
\begin{equation}
\label{eq:proof.reverse.111}
\begin{cases}
	\d X_t =  - \frac{  \partial_xv(t,X_t)}{Z^{\alpha^\star}_t} - \frac12 \partial_x U(X_t) dt + \d W_t^{\alpha^\star} \quad \Q^\star \text{-a.s. with } \Q^\star\circ X_0^{-1} = \muin\\ 
	\d Z^{\alpha^\star}_t = \frac{|  \partial_xv(t,X_t)|^2}{Z^{\alpha^\star}_t}dt -  \partial_xv(t, X_t)\cdot dW^{\alpha^\star}_t \quad \Q^\star \text{-a.s.}
\end{cases}
\end{equation}
 
That is, the measure $\Q^\star$ solves the martingale problem with drift $B$ and volatility $\Sigma$, where
\begin{equation}
	B_t(x,z) =
	\begin{pmatrix}
-  \frac{\partial_xv(t,x)}{z} -\frac12 \partial_x U (x)\\
\frac{|\partial_xv(t,x)|^2}{z}  
\end{pmatrix}
\quad \mbox{ and } \Sigma_t(x,z) = 
\begin{pmatrix}
 I_\xdim\\
-  \partial_xv(t,x)^\top
\end{pmatrix}
\end{equation}
and $I_\xdim$ being the identity matrix of $\R^{\xdim\times \xdim}$.
That is, the canonical process $\Yc$ satisfies
\begin{equation}
\label{eq:proof.reverse.222}
	d\Yc_t = {B}_t( \Yc_t)dt + {\Sigma}_t( \Yc_t)dW_t,\quad {\Q^\star}\mbox{-a.s.}
\end{equation}
By construction, we have $\Hc(\Q^*|\P)<\infty$, where $\P$ solves the martingale problem with drift $\Bc$ and volatility $\Sigma$, with 
\begin{equation*}
	 \Bc(x) = 
		\begin{pmatrix}
-\frac12 \partial_x  U (x)\\
0  
\end{pmatrix}.
\end{equation*}
Since the functions $\partial_x U$ and $\partial_xv$ are Lipschitz-continuous, $\P$ is unique.
Thus, by \cite[Theorem 1.3]{cattiaux2023time}, the time reversal $\overleftarrow{\P}$ of $\P$ satisfies the martingale problem with drift $\overleftarrow{\Bc}_{T-t}(x,z) = - \Bc(x)  + \nabla \cdot  \Sigma\Sigma^\top_t(x,z) +  \Sigma\Sigma^\top_t(x,z) \nabla  \log \P_t(x,z)  $  and volatility $\overleftarrow{\Sigma}_t(x,z) = \Sigma_{T-t}(x,z)$, where $\P_t = \P\circ(X_t,Z_t)^{-1}$.
	Note that,
	\[
	\nabla \cdot  \Sigma\Sigma^\top_t(x,z)=\begin{pmatrix} 0_m\\ -\Delta v(t,x) \end{pmatrix}.
	\]

% That is, the canonical process $\Yc$ on $\Cc_\xdim\times \Cc_1$ satisfies the SDE
% \begin{equation*}
% 	d\Yc_t = B(t, \Yc_t)dt + \Sigma(t, \Yc_t)dW_t,\quad \Q^*\mbox{-a.s.}
% \end{equation*}
Applying \citet[Theorem 1.16]{cattiaux2023time} again
% ({\color{blue}here the canonical process is $\P$, which assumed reversible}), 
it follows that the time reversal $\overleftarrow{\Q^\star}$ of $\Q^\star$ solves the martingale problem with drift $\overleftarrow{B}$ and volatility $\overleftarrow{\Sigma}$, with
\begin{equation*}
	\overleftarrow{B}_{T-t}(x,z) = -B_t(x,z) + \Bc(x) + \overleftarrow{\Bc}_{T-t}(x,z) +  \Sigma\Sigma^\top_t(x,z)\nabla  \log \rho_t %\mbox{ and } \overleftarrow{\Sigma}(T-t,x,z) = \Sigma(t,x,z) 
	\quad \d t \Q^\star_t\text{-a.s.}
\end{equation*}
with $\rho_t = d\Q^\star_t/d\P_t$ and $\Q^\star_t = \Q^\star\circ(X_t,Z_t)^{-1}$, and where the derivative is in the sense of distributions.
Spelling this out, using the fact that $\log(\rho_t) = \log(\Q^\star_t) - \log(\P_t)$, we have
\begin{equation*}
		\overleftarrow{B}_{T-t}(x,z) = 
		\begin{pmatrix}
   \frac{\partial_xv(t,x)}{z}  + \frac12  \partial_x U(x) \\
 -\frac{|\partial_xv(t,x)|^2}{z} -\Delta v(t,x) 
\end{pmatrix}+  \Sigma\Sigma^\top_t(x,z)  \nabla \log\Q^\star_t(x,z)
\end{equation*}
On the other hand, now specializing Equation \ref{eq:proof.reverse.2} to the current choice of $\ell$ yields
\begin{equation*}
\begin{cases}
	\d X_t = -\frac{\partial_x\overleftarrow{v}(t,X_t)}{Z_t} -\frac 12 \partial_x U(X_t) \d t + \d W_t^{\hat{\alpha}} \quad \overleftarrow{\Q^\star} \text{-a.s. with } \overleftarrow{\Q^\star}\circ X_0^{-1} = \mufin
	\\ 
	\d Z^{\hat \alpha}_t = \frac{|\sigma^\top \partial_x\overleftarrow{v}(t,X_t)|^2}{Z^{\hat \alpha}_t}dt -  \partial_x\overleftarrow{v}(t, X_t)\cdot \d W^{\hat \alpha}_t \quad \overleftarrow{\Q^\star}.
\end{cases}
\end{equation*}
That is, the probability measure $\overleftarrow{\Q^\star}$ solves the martingale problem with drift $\widehat B$ and volatility $\widehat \Sigma$ given by
\begin{equation}
	\widehat B_t(x,z) =
	\begin{pmatrix}
-  \frac{\partial_x\overleftarrow v(t,x)}{z} -\frac 12 \partial_x U(x)  \\
\frac{|\partial_x\overleftarrow v(t,x)|^2}{z}  
\end{pmatrix}
\quad \mbox{and} \quad  \widehat \Sigma_t(x,z) = 
\begin{pmatrix}
I_\xdim  \\
 \partial_x\overleftarrow v(t,x)
\end{pmatrix}.
\end{equation}
In other words, under $\overleftarrow{\Q^\star}$ the canonical process $\Yc$ on $\Cc_\xdim\times \Cc_1$ satisfies the SDE
\begin{equation}
\label{eq:proof.reverse.222_111}
	d\Yc_t = \widehat B_t( \Yc_t)dt + \widehat \Sigma_t(\Yc_t)dW_t,\quad \overleftarrow{\Q^\star}\mbox{-a.s.}
\end{equation}
By \eqref{eq:proof.reverse.222_111} and \eqref{eq:proof.reverse.222} we thus deduce that $\widehat B = \overleftarrow{B}$ and $\widehat \Sigma = \overleftarrow{\Sigma}$.
That is,
% \begin{equation*}
% 	\begin{cases}
% 		 \frac{\partial_x\widehat v(T-t,x)}{z}+ b(T-t,x) = -2 \frac{\partial_x v(t,x)}{z} -2b(t,x)+2 |\partial_xv(t,x)|^2\partial_x\log\sqrt{\Q^*_t}(x,z)\\ 
% 		 \frac{|\partial_x\widehat v(T-t,x)|^2}{z}  = -2\frac{|\partial_xv(t,x)|^2}{z}  + 2|\partial_xv(t,x)|^2\partial_z\log\sqrt{\Q^*_t}(x,z).
% 	\end{cases}
% \end{equation*}
\begin{equation*}
		\begin{pmatrix}
- \frac{\partial_x\overleftarrow v(T-t,x)}{z} -\frac12  \partial_x U(x)  \\
\frac{|\partial_x\overleftarrow v(T-t,x)|^2}{z}  
\end{pmatrix} = 
		\begin{pmatrix}
   \frac{\partial_x  v(t,x)}{z}  +\frac 12 \partial_x U(x)\\
 -\frac{|\partial_x  v(t,x)|^2}{z}  -\Delta v(t,x) 
 \end{pmatrix} +  \Sigma\Sigma^\top_t(x,z)  \nabla \log\Q^\star_t(x,z),
\end{equation*}
%Using the fact that $\log(\rho_t) = \log(\Q^*_t) - \log(\P_t)$, the above yields
%\begin{equation*}
%\begin{cases}
%- \frac{\partial_x\overleftarrow v(T-t,x)}{z} - \frac{\partial_x v(t,x)}{z} - \nabla U(x) =   \nabla \cdot \log(\Q^*_t)\\
%\frac{|\partial_x\overleftarrow  v(T-t,x)|^2}{z} + \frac{|\partial_x v(t,x)|^2}{z} =  \partial_xv(t,x)( \partial_x  \log(\Q^*_t) - \log(\P_t)) + |\partial_xv(t,x)|^2(\partial_z\log(\Q^*_t) - \log(\P_t))
%\end{cases},
%\end{equation*}
which is tantamount to the desired result.
\end{proof}
%{\color{magenta} I feel all the equations will look nicer if we switch $\partial_x v$ to $\nabla v(t,x)$ and we say, probably in the control part that $\nabla$ acts only on the x-coordinate}

\begin{appendix}
\section{Appendix}
In this appendix we provide full justification or references for some steps in the proofs.
Some of the results might be well-known, we provide details because we could not find a directly citable reference.
We begin by recalling the duality properties of divergences.

\begin{lemma}
\label{prop:dual.f.div} 
Let $\ell$ satisfy {\rm \Cref{assump.data}\ref{assump.data.1}}.
For every $\bar\P\in{\rm Prob}(\Omega)$, the functions $\Ic_\ell(\cdot |\bar \P)$ and $\Phi_{\bar \P}$ are conjugate of one another. More precisely, it holds
\begin{align}\label{duality.f.div}
\Ic_\ell(\Q|\bar \P)=\sup_{\xi \in \L^0} \big( \E^{\Q}[\xi ]-\Phi_{\bar\P}(\xi) \big)\quad \mbox{for all } \Q\ll \bar\P
\end{align}
and 
	\begin{equation}
	\label{eq:dual.rep.gen.oce}
		\Phi_{\bar\P} (\xi) = \sup_{\Q\ll \bar \P}\big( \E^\Q[\xi] - \Ic_\ell(\Q|\bar \P) \big), \;\mbox{for all } \xi \mbox{ such that }  \E^{\bar \P}[\ell^\ast(\xi^+)]  < + \infty.
	\end{equation}
% \begin{enumerate}[label=$(\roman*)$, ref=.$(\roman*)$,wide,  labelindent=0pt]
% \item Let . Suppose $\Q\ll\bar \P$. Then,
% \begin{align}\label{duality.f.div}
% \Ic_\ell(\Q|\bar \P)=\sup_{\xi \in \L^0} \E^{\Q}[\xi ]-\Phi_{\bar\P}(\xi)
% \end{align}
% \item The functional $\Phi_{\bar\P} :\L^0 \longrightarrow \R$ admits the dual representation
% 	\begin{equation*}
% 		\Phi_{\bar\P} (\xi) = \sup_{\Q\ll \bar \P}(\E^\Q[\xi] - \Ic_\ell(\Q|\bar \P)), \; \E^{\bar \P}[\ell^\ast(\xi^-)]<\infty.
% 	\end{equation*}
	Moreover, for each $\xi \in \L^0$ such that  $\E^{\bar \P}[\ell^\ast(\xi^+)]<\infty$, there is $ \Q^\star\ll \bar \P$ such that
		$\Phi_{\bar\P} (\xi) = \E^{\Q^\star}[\xi] - \Ic_\ell(\Q^\star|\bar \P)$.
	Such an optimizer is given by
	\begin{equation}\label{eq.dual.f.div}
		\dfrac{\d \Q^\star\!}{\d\bar \P} = \partial_x\ell^*(\xi -r^\star ),\, \bar \P\as,
	\end{equation}
	where  $r^\star \in \R$ is uniquely defined by $\E^{\bar \P}[\partial_x\ell^*(\xi - r^\star)] = 1$.
	This constant also satisfies $\Phi_{\bar\P} (\xi)  = \E^{\bar\P}[\ell^*(\xi - r^\star)] + r^\star$.
%\end{enumerate}
\end{lemma}

\begin{proof}
The duality statement is the variational representation of $f$-divergences, see \cite[Theorem 4.4]{bental2007old} or \cite[Theorem 7.27]{polyanskiy2025information}.
%According to \cite[Theorem 7.27]{polyanskiy2025information} the sup can be taken over all measurable functions, i.e., random variables.
The existence and characterization of optimizers is \cite[Theorem 2.3]{cherny2007divergence}.
\end{proof}

The following result can be seen as (an instance of) the data processing inequality, well known in the entropic case.
	In the context of mathematical statistics, it pertains to the so-called sufficiency of a statistic $f:\Omega\longrightarrow \Omega$.
	The following result appeared first in \cite{liese2006divergences}, though in a slightly different setting due to the formulation of divergences.
	For the sake of completeness and the reader's convenience, we present a proof. 

\begin{lemma}
\label{lem:time.reversed.divergence}
	For any measurable map $f:\Omega\longrightarrow\Omega$ we have that
	\begin{align}\label{eq.datatransfer}
		\Ic_\ell(\Q\circ f^{-1}|\P\circ f^{-1})\leq \Ic_\ell(\Q|\P)
	\end{align}
	and whenever $\Ic_\ell(\Q|\P)<\infty$, the previous inequality is tight if and only if $\frac{\d \Q}{\d\P}$ is a measurable function of $f$.
\end{lemma}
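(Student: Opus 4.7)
The plan is to reduce the inequality to conditional Jensen's inequality applied with respect to the sub-$\sigma$-algebra $\sigma(f):=f^{-1}(\Fc_T)$. First, if $\Q \not\ll \P$, then the right-hand side of \eqref{eq.datatransfer} is infinite and the inequality is trivial. I would therefore assume $\Q\ll\P$ and set $Z:=\d\Q/\d\P$, which is nonnegative and $\P$--integrable.

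Second, I would identify the Radon-Nikodym derivative of the pushforwards. Since $\Q\ll\P$ implies $\Q\circ f^{-1}\ll\P\circ f^{-1}$, the density $g:=\d(\Q\circ f^{-1})/\d(\P\circ f^{-1})$ exists. For any $A=f^{-1}(B)\in\sigma(f)$, a change of variables yields
\[
\int_A Z\,\d\P = \Q(A) = (\Q\circ f^{-1})(B) = \int_B g\,\d(\P\circ f^{-1}) = \int_A g\circ f\,\d\P,
\]
which gives $g\circ f=\E^\P[Z\mid\sigma(f)]$, $\P$--a.s.

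Third, I would invoke the convexity of $\ell$ (Assumption \ref{assump.data}\ref{assump.data.1}) and conditional Jensen's inequality to obtain $\ell(g\circ f)=\ell\bigl(\E^\P[Z\mid\sigma(f)]\bigr)\le\E^\P[\ell(Z)\mid\sigma(f)]$; then taking $\P$--expectation and changing variables back yields
\[
\Ic_\ell(\Q\circ f^{-1}\mid\P\circ f^{-1})=\E^\P[\ell(g\circ f)]\le\E^\P[\ell(Z)]=\Ic_\ell(\Q\mid\P),
\]
which is \eqref{eq.datatransfer}.

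Finally, assuming $\Ic_\ell(\Q\mid\P)<\infty$, equality in the above chain forces $\ell(\E^\P[Z\mid\sigma(f)])=\E^\P[\ell(Z)\mid\sigma(f)]$ $\P$--a.s. Since $\ell$ is strictly convex on $(0,\infty)$, a standard conditional Jensen equality argument (passing through the subgradient characterization of strict convexity) shows this is equivalent to $Z$ being $\sigma(f)$--measurable, which by the Doob-Dynkin lemma is equivalent to $Z=h\circ f$ for some Borel measurable $h$. Conversely, if $Z$ is a measurable function of $f$, then $g\circ f=\E^\P[Z\mid\sigma(f)]=Z$ trivially, and equality in \eqref{eq.datatransfer} is immediate. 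The main subtlety is the equality case: one must handle the set $\{Z=0\}$ carefully, since strict convexity is only assumed on $(0,\infty)$; however, on $\{Z=0\}$ both sides of Jensen vanish, so the strict-convexity dichotomy applies on $\{Z>0\}$, which is enough to conclude.
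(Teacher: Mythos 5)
Your proof is correct but takes a genuinely different route from the paper's. For the inequality \eqref{eq.datatransfer}, the paper appeals to the convex-duality representation $\Ic_\ell(\Q|\P)=\sup_{\xi}\big(\E^\Q[\xi]-\Phi_\P(\xi)\big)$ from Lemma~\ref{prop:dual.f.div}: noting that $\Phi_\P(\xi\circ f)=\Phi_{\P\circ f^{-1}}(\xi)$, one restricts the supremum to test functions of the form $\xi\circ f$ and passes to the pushforward. You instead identify the pushforward density as a conditional expectation, $g\circ f=\E^\P[Z\,|\,\sigma(f)]$ with $Z=\d\Q/\d\P$, and apply conditional Jensen; this is the other canonical proof of data-processing for $f$-divergences, more elementary in that it bypasses the duality lemma, and it sets up the equality case cleanly. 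For ``$\Leftarrow$'' your computation matches the paper's. For ``$\Rightarrow$'' your strict-convexity step is in fact needed: the paper's displayed chain, which tests only against $\sigma(f)$-measurable functions $\xi\circ f$, establishes $h\circ f=\E^\P[Z\,|\,\sigma(f)]$ rather than $Z=h\circ f$, so the finiteness/equality assumption together with strict convexity of $\ell$ must be invoked exactly as you do. One small imprecision in your write-up: it is not accurate that ``on $\{Z=0\}$ both sides of Jensen vanish'' (indeed $\ell(0)>0$ since $\ell\ge 0$, $\ell(1)=0$, and $\ell$ is strictly convex). The correct resolution is that, for $Y=\E^\P[Z\,|\,\sigma(f)]>0$, the supporting line at $Y$ lies strictly below $\ell$ at $z=0$ because $\ell$ is convex on $[0,\infty)$ and strictly convex on $(0,\infty)$, so the equality case of the subgradient inequality still forces $Z=Y$ a.s.\ on $\{Y>0\}$; on $\{Y=0\}$ one has $Z=0$ a.s.\ automatically since $Z\ge0$. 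With that adjustment the argument is complete.
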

\begin{proof}
	Let us first argue \eqref{eq.datatransfer}. 
	Note that when $\Q\ll\P$ does hold, \eqref{eq.datatransfer} is clear.
	Suppose now $\Q\ll\P$ and let $\hat\xi=\xi\circ f$ for $\xi\in\L^\infty$ and note that by definition \( \Phi_\P(\xi \circ f)= \Phi_{\P\circ f^{-1}}(\xi) \).
	Thus, thanks to \eqref{duality.f.div},
	\[
		\Ic_\ell(\Q|\P)\geq \E^{\Q\circ f^{-1}}[\xi ]-\Phi_{\P\circ f^{-1}}(\xi).
	\]
	Taking the supremum over $\xi\in \L^0$, we deduce \eqref{eq.datatransfer}.
\medskip

	Suppose now that 
%$\frac{\d \Q}{\d\P}$ is a measurable function of $T$, i.e., 
	$\frac{\d \Q}{\d\P}=h\circ f$ for some measurable function $h$. 
	We claim that $\frac{\d \Q\circ f^{-1} }{\d\P\circ f^{-1}}=h $. 
	Indeed, for all $\xi$ we have
	\[
		\int_\Omega \xi(\omega)  \Q\circ f^{-1}(\d \omega)=\int_\Omega \xi(f(\omega) )  \Q (\d \omega)=\int_\Omega   \xi(f(\omega) )  h(f(\omega) )  \P (\d \omega)=\int_\Omega \xi(\omega) h(\omega) \P\circ f^{-1}(\d \omega)
	\]
	where the second equality follows by the Radon-Nikodym theorem.
	Consequently,
	\[
		\Ic_\ell(\Q|\P)=\E^\P\Big[ \ell\Big( \frac{\d \Q}{\d\P}\Big)\Big] =\E^\P[ \ell (  h\circ f)]=\E^{\P\circ f^{-1}}[ \ell (  h  ) ]=\E^{\P\circ f^{-1}}\Big[ \ell\Big(   \frac{\d \Q\circ f^{-1} }{\d\P\circ f^{-1}}\Big)\Big]=\Ic_\ell(\Q\circ f^{-1}| \d\P\circ f^{-1}) .
	\]

	Let us now assume that $\Ic_\ell (\Q\circ f^{-1}|\P\circ f^{-1})= \Ic_\ell (\Q |\P)<\infty$. Since $\Ic_\ell (\Q\circ f^{-1}|\P\circ f^{-1})<\infty$, we deduce that there exists $h$ such that for any $\xi$, 
\(
\int_\Omega \xi(\omega)  \Q\circ f^{-1}(\d \omega) =\int_\Omega \xi(\omega) h(\omega) \P\circ f^{-1}(\d \omega),
\)
and thus
\[
\int_\Omega \xi(f(\omega) )  \Q (\d \omega)=\int_\Omega \xi(\omega)  \Q\circ f^{-1}(\d \omega)=\int_\Omega \xi(\omega) h(\omega) \P\circ f^{-1}(\d \omega)=\int_\Omega   \xi(f(\omega) )  h(f(\omega) )  \P (\d \omega).
\] 
By the $\P\as$ uniqueness of $\dfrac{\d\Q}{\d\P}$, it follows again by Radon-Nikodym theorem that it is a measurable function of $f$.
\end{proof}

%The rest of this appendix is used to collect some technical results employed in the manuscript.

\begin{lemma}
\label{lemma.tv.ctransform}
Let $c(x,\rho)=\int_{\R^\xdim} \1{x\neq y}\rho(\d y)$. Then $\Wc_c(\mu,\mufin)={\rm TV}(\mu,\mufin)$ and $Q\varphi(x)=\varphi(x)$.
\begin{proof}
That $\Wc_c(\mu,\mufin)={\rm TV}(\mu,\mufin)$ follows directly by definition of WOT, see \eqref{eq.weaktransport}. 
	We now argue that, without loss of generality, we may assume that $Q\varphi(x)=\varphi(x)$. 
	First, note that since $c$ is bounded, we have that the set $C_{b,p}$ in the dual formulation \eqref{eq:dualprobl.intro} corresponds to $\varphi$ bounded.
	Second, by definition of $Q_c\varphi(x)$, see \eqref{eq:defPsi}, for any $k\in \R$, $Q_c(\varphi+k)=Q_c\varphi+k$ and $Q_c\varphi(x)=\min\{ 1+\inf_{y\neq x} \varphi(y) ,\varphi(x)\}$.
	Thus, thanks to \Cref{prop:dual.f.div} we see that
	\[
	\Psi^{\varphi+k}(\mu_0)-\langle \varphi+k,\mufin\rangle=\Psi^{\varphi}(\mu_0)-\langle \varphi,\mufin\rangle.
	\]
	Thus, we can always add a constant to $\varphi$, without increasing $\Wc_c$ and thus $Q_c\varphi(x)=\min\{0,\varphi(x)\}$.
\end{proof}
\end{lemma}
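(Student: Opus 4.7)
The plan is to prove the two claims of the lemma separately.

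For the first identity $\Wc_c(\mu,\mufin) = {\rm TV}(\mu,\mufin)$, I would simply unwind the definition of weak OT. Noting that $c(x,\rho) = \rho(\R^\xdim \setminus \{x\}) = 1 - \rho(\{x\})$, Fubini yields for any $\pi \in \Pi(\mu,\mufin)$
\[
\int_{\R^\xdim} c(x,\pi_x)\,\mu(\d x) = \int_{\R^\xdim\times\R^\xdim} \1{x\neq y}\,\pi(\d x, \d y) = 1 - \pi(\Delta),
\]
where $\Delta$ denotes the diagonal. Taking the infimum over $\pi \in \Pi(\mu,\mufin)$ and invoking the classical maximal-coupling characterization ${\rm TV}(\mu,\mufin) = 1 - \sup_{\pi \in \Pi(\mu,\mufin)}\pi(\Delta)$ (Dobrushin) gives the identity.

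For the second claim, my first step would be to derive an explicit formula for $Q_c\varphi$. Since $\rho \mapsto \int(\1{x\neq y} + \varphi(y))\rho(\d y)$ is linear in $\rho$, its infimum over $\rho \in \Pc_p(\R^\xdim)$ is attained at an extreme point of the simplex, i.e., a Dirac mass, yielding
\[
Q_c\varphi(x) = \inf_{y \in \R^\xdim}\big(\1{x\neq y} + \varphi(y)\big) = \min\!\left\{\varphi(x),\; 1 + \inf_{y\neq x}\varphi(y)\right\}.
\]
In general this shows only $Q_c\varphi \le \varphi$ pointwise, with equality exactly when $\varphi$ has oscillation at most $1$. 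The claim $Q_c\varphi = \varphi$ must therefore be read as a normalization statement: in the dual problem one can always replace $\varphi$ by a suitable representative of its constant-shift orbit for which the equality holds. Concretely, the OCE definition of $\Phi_{\P_x}$ directly yields $\Phi_{\P_x}(\xi - k) = \Phi_{\P_x}(\xi) - k$ for any $k \in \R$, so since $\mu_0$ is a probability one has $\Psi^{\varphi+k}(\mu_0) = \Psi^\varphi(\mu_0) + k$, while the pairing shift $\langle \varphi + k, \mufin\rangle = \langle\varphi,\mufin\rangle + k$ cancels it in the dual objective. Combined with $Q_c(\varphi + k) = Q_c\varphi + k$, visible from the explicit formula, and with the fact that boundedness of $c$ restricts $C_{b,p}$ to bounded functions, this allows one to freely renormalize $\varphi$ to the regime where $Q_c\varphi = \varphi$ in the applications downstream (e.g., \Cref{rmk.duality.tv} and \Cref{thm:property.main.2}).

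The main subtlety I expect is making the last reduction fully rigorous, namely showing that passing to a $c$-concave representative does not alter the value of the dual problem. The classical template in OT is $Q_c \circ Q_c = Q_c$ together with a monotonicity argument on the objective; here one would have to balance the monotonicity of $\Phi_{\P_x}$ in its argument against the corresponding change in the linear pairing $\langle \varphi,\mufin\rangle$, and it is precisely the translation covariance of $\Phi_{\P_x}$ noted above that makes this balance work.
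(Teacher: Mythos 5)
Your treatment of the first identity is more explicit than the paper's (the paper just invokes ``by definition''), but the two amount to the same computation via Dobrushin's maximal-coupling characterization; nothing to flag there. On the second claim your reading is actually sharper than the paper's own proof. The paper stops at the observation that the dual objective $\Psi^{\varphi}(\mu_0)-\langle\varphi,\mufin\rangle$ is invariant under constant shifts $\varphi\mapsto\varphi+k$ (which follows from the translation covariance of $\Phi_{\P_x}$ and $Q_c(\varphi+k)=Q_c\varphi+k$), and then concludes ``$Q_c\varphi(x)=\min\{0,\varphi(x)\}$.'' As you correctly point out, a constant shift alone never forces $Q_c\varphi=\varphi$ unless $\varphi$ already has oscillation at most $1$, so the paper's argument as written does not establish the stated conclusion.

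The missing step, which you name but do not spell out, is exactly the $c$-concavification step. It is worth recording it because it makes your ``subtlety'' a non-issue: for this $c$, the $c$-transform $Q_c\varphi$ has oscillation at most $1$ (since $\1{x\neq y}$ is a metric, $|Q_c\varphi(x)-Q_c\varphi(x')|\le 1$ for all $x,x'$), and therefore $Q_c(Q_c\varphi)=Q_c\varphi$. Consequently $\Psi^{Q_c\varphi}=\Psi^{\varphi}$ (as $\Psi$ only depends on $\varphi$ through $Q_c\varphi$), while $Q_c\varphi\le\varphi$ gives $\langle Q_c\varphi,\mufin\rangle\le\langle\varphi,\mufin\rangle$; hence replacing $\varphi$ by the bounded continuous function $Q_c\varphi$ does not decrease the dual objective and does not leave $C_{b,p}$. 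Taking the supremum only over fixed points of $Q_c$ is therefore lossless, and on that subclass $Q_c\varphi=\varphi$ identically — which is the precise sense in which the lemma's claim should be read. Your approach is the right one; the paper's own proof would benefit from being repaired along these lines.
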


\begin{lemma}
\label{lemma.v.disintegration}
The functional 
\[ 
\Pc(\R^\xdim)\ni\nu\longmapsto \Phi(\nu )\coloneqq  \inf_{\P^\alpha\in \Ac(\nu)}\E^{\P^\alpha} \bigg[ \frac12  \int_0^T  \ell^{\prime\prime}(Z_t)Z_t \|\alpha_t\|^2 \d t  +  Q_c \varphi(X_T) + C(X) \bigg]
\]	
satisfies	
\[
\Phi(\nu )=\int_{\R^\xdim} \Phi(\delta_x)\nu (\d x).
\]
\begin{proof}
Note that $\Phi(\nu )$ denotes the value of a stochastic control problem.
	For control problems in the so-called weak formulation in the canonical space, the statement follows from \cite[Lemma 4.5]{hernandez2025schrodinger} or \cite[Lemma 3.5]{tan2013optimal} for mean-field and classical control problems, respectively.
	In either setting, the argument rests on a measurable selection argument.
	In the context of this manuscript, we may enlarge the canonical space so that both $\alpha$ and $Z$ are canonical processes and minimize over probability laws of $(X,Z,\alpha)$ instead of over $\Ac(\nu)$.
	That the value remains unchanged with such enlargement follows from \cite[Theorem 4.10]{karoui2015capacities2}.
\end{proof}
\end{lemma}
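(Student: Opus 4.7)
The plan is to establish the claim by two matching inequalities, after first placing ourselves in a setting where classical measurable selection arguments apply. I would begin by enlarging the canonical space so that, in addition to $X$, both the stochastic exponential $Z$ and the control $\alpha$ appear as canonical processes: consider $\Omega'\coloneqq \Cc([0,T],\R^m\times \R)\times \L^0([0,T],\R^m)$ equipped with a suitable Polish topology, and reformulate $\Phi(\nu)$ as an infimum over laws $\mathbb{R}\in \Pc(\Omega')$ whose first marginal is $\nu$ and whose dynamics match \eqref{eq.dyn.P} together with $dZ_t = Z_t\alpha_t\cdot dW_t$. That the value of this enlarged problem coincides with $\Phi(\nu)$ follows from \cite[Theorem 4.10]{karoui2015capacities2}, and the set of admissible laws forms a Borel (in fact analytic) subset of $\Pc(\Omega')$ thanks to the standard martingale-problem characterization.

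For the inequality $\Phi(\nu) \ge \int \Phi(\delta_x)\nu(\d x)$, I would take any admissible $\P^\alpha \in \Ac(\nu)$, pass to its lift $\mathbb{R}\in \Pc(\Omega')$, and disintegrate it with respect to $\sigma(X_0)$ to obtain a kernel $(\mathbb{R}_x)_{x\in \R^m}$. Each $\mathbb{R}_x$ is admissible for the problem with initial distribution $\delta_x$, because the constraints involved ($X_0 = x$, the SDE \eqref{eq.dyn.P}, and $Z_0=1$ with $dZ = Z\alpha\, dW$) are preserved under conditioning on $X_0 = x$; see the standard argument in \cite[Lemma 4.5]{hernandez2025schrodinger}. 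Then Fubini and the fact that $C(X) + Q_c\varphi(X_T) + \tfrac12\int_0^T\ell''(Z_t)Z_t\|\alpha_t\|^2 dt$ is a functional of the canonical coordinates only yield the desired inequality termwise.

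The reverse inequality $\Phi(\nu) \le \int \Phi(\delta_x)\nu(\d x)$ is the main obstacle, since it requires glueing a family of (near) optimizers into a single admissible control without losing measurability in $x$. I would define, for each $\eps>0$, the set-valued map
\begin{equation*}
	x \longmapsto \Mc_\eps(x) \coloneqq \Big\{\mathbb{R} \in \Ac(\delta_x) : \E^{\mathbb{R}}\big[\tfrac12\!\int_0^T\!\ell''(Z_t)Z_t\|\alpha_t\|^2 dt + Q_c\varphi(X_T) + C(X)\big] \le \Phi(\delta_x) + \eps\Big\}.
\end{equation*}
Since $x \mapsto \Ac(\delta_x)$ has analytic graph, $x \mapsto \Phi(\delta_x)$ is upper semianalytic, and the cost is lower semianalytic, the graph of $\Mc_\eps$ is analytic. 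By the Jankov--von Neumann measurable selection theorem (as used in \cite[Lemma 3.5]{tan2013optimal}), one obtains a universally measurable selector $x \mapsto \mathbb{R}^\eps_x$. Integrating against $\nu$ produces $\mathbb{R}^\eps \coloneqq \int \mathbb{R}^\eps_x\, \nu(\d x) \in \Ac(\nu)$, whose cost is at most $\int \Phi(\delta_x)\nu(\d x) + \eps$. Letting $\eps \downarrow 0$ gives the desired bound.

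The delicate point is verifying the analyticity of the graph of $\Ac(\delta_\cdot)$, since admissibility is defined via the condition $dZ_t = Z_t\alpha_t\, dW_t$ with $\alpha \in \Hc^2_{\mathrm{loc}}$; this must be recast as a Borel-measurable system of martingale-problem constraints on the enlarged canonical space. Once this recasting is in place, everything else is bookkeeping and Fubini.
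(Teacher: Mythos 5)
Your proposal follows the same route as the paper: enlarge the canonical space so that $X$, $Z$, and $\alpha$ are all canonical coordinates (with the value unchanged by \cite[Theorem 4.10]{karoui2015capacities2}), then prove the two inequalities by disintegration/Fubini and by a Jankov--von Neumann measurable-selection argument, exactly as in the cited lemmas \cite[Lemma 4.5]{hernandez2025schrodinger} and \cite[Lemma 3.5]{tan2013optimal}. The paper's proof is a citation sketch; yours simply spells out the content of those references, so the two are essentially identical in substance.
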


\end{appendix}

\bibliography{Bibliography}
\end{document}